\newfont{\msam}{msam10}
\newtheorem{theorem}{Theorem}[section]
\newtheorem{proposition}[theorem]{Proposition}
\newtheorem{corollary}[theorem]{Corollary}
\newtheorem{lemma}[theorem]{Lemma}
\theoremstyle{definition}
\newtheorem{definition}[theorem]{Definition}
\newtheorem{remark}[theorem]{Remark}
\newtheorem{example}[theorem]{Example}
\newtheorem{conjecture}[theorem]{Conjecture}
\let\nc\newcommand
\nc{\la}{\label}
\def\bthm{\begin{theorem}}
\def\ethm{\end{theorem}}
\def\blemma{\begin{lemma}}
\def\elemma{\end{lemma}}
\def\bproof{\begin{proof}}
\def\eproof{\end{proof}}
\def\bprop{\begin{proposition}}
\def\eprop{\end{proposition}}
\def\b{\mathfrak b}
\def\Z{\mathbb{Z}}
\def\O{\mathcal{O}}
\def\H{\mathscr{H}}
\def\e{\boldsymbol{\mathrm{e}}}
\def\ult{{\underline{t}}}
\def\ulone{{\underline{1}}}
\def\loc{\mathrm{loc}}
\def\yy{\hat{y}}
\def\xx{\hat{x}}
\def\c{\mathbb{C}}
\def\C{\mathbb{C}}
\nc{\Hom}{{\rm{Hom}}}
\nc{\Ext}{{\rm{Ext}}}
\nc{\htau}{{\bar{ t}}}
\nc{\HOM}{\underline{\rm{Hom}}}
\nc{\EXT}{\underline{\rm{Ext}}}
\nc{\TOR}{\underline{\rm{Tor}}}
\nc{\End}{{\rm{End}}}
\nc{\Map}{{\rm{Map}}}
\nc{\Out}{{\rm{Out}}}
\nc{\GL}{{\rm{GL}}}
\nc{\SL}{{\rm{SL}}}
\nc{\PGL}{{\rm{PGL}}}
\nc{\G}{{\rm{G}}}
\nc{\Rep}{{\rm{Rep}}}
\nc{\ad}{{\rm{ad}}}
\nc{\dlim}{\varinjlim}
\def\H{\mathrm{H}}
\def\SH{\mathrm{SH}}
\def\S{\mathrm{S}}
\newcommand{\Aut}{{\rm{Aut}}}
\newcommand{\Tr}{{\rm{Tr}}}
\newcommand{\tr}{{\rm{tr}}}
\newcommand{\Ker}{{\rm{Ker}}}
\newcommand{\ev}{{\rm{ev}}}
\newcommand{\onto}{\,\,\twoheadrightarrow\,\,}
\numberwithin{equation}{section}
\nc{\sk}{{\mathrm{Sk}}}
\newcommand{\ochar}{{\mathcal O \mathrm{Char}}}
\newcommand{\chr}{{\mathrm{Char}}}
\newcommand{\rep}{ {\mathrm{Rep} } }
\newcommand{\pmone}{ {\pm 1 } }
\newcommand{\pp}{\mathfrak{p}}
\newcommand{\lv}[1]{ {\lvert #1 \rvert } }
\newcommand{\rr}[1]{\textcolor{red}{#1}}
\newcommand{\cmt}[1]{\noindent \rr{\textbf{#1}}}
\newcommand\bto{\bar {t}_1}
\newcommand{\btt}{\bar {t}_2}
\begin{document}
\title{Affine cubic surfaces and character varieties of knots}
\date{\today}
\author{Yuri Berest}
\address{Department of Mathematics, Cornell University, Ithaca, NY 14853-4201, USA}
\email{berest@math.cornell.edu}
%
%
\author{Peter Samuelson}
\address{Department of Mathematics, University of Edinburgh, Edinburgh, UK, EH9 1PH}
\email{peter.samuelson@ed.ac.uk}

\begin{abstract}
It is known that the fundamental group homomorphism $\pi_1(T^2) \to \pi_1(S^3\setminus K)$  induced by the inclusion
of the boundary torus into the complement of a knot $K$ in $S^3$ is a complete knot invariant. Many classical  invariants
of knots arise from the natural (restriction) map induced by the above homomorphism on the $\SL_2$-character varieties of the corresponding fundamental groups. In our earlier work \cite{BS16}, we proposed a
conjecture that the classical restriction map admits a canonical
2-parameter deformation into a smooth cubic surface. In this paper,
we show that (modulo some mild technical conditions) our conjecture follows from a known conjecture of Brumfiel and Hilden \cite{BH95}
on the algebraic structure of the peripheral system of a knot. We then confirm the Brumfiel-Hilden conjecture for an infinite class of knots, including all torus knots, 2-bridge knots, and certain pretzel knots. We also show the class of knots for which the Brumfiel-Hilden conjecture holds is closed under taking connect sums and certain knot coverings.
\end{abstract}

\maketitle
\begin{center}
\textit{To Efim Zelmanov on the occasion of his 60th birthday}
\end{center}
\setcounter{tocdepth}{1}
\tableofcontents

\section{Introduction and motivation}


One classical tool in the study of 3-manifolds is the $\SL_2(\C)$ 
\emph{character variety} $\chr(M)$. This is the (algebro-geometric) quotient of the representation variety $\Rep(M)$ by the natural $\GL_2(\C)$ action, where $\Rep(M)$ parameterizes representations $\pi_1(M) \to \SL_2(\C)$ of the fundamental group of a 3-manifold $M$ into $\SL_2(\C)$, and $\GL_2(\C)$ acts by conjugation.  

In particular, given a knot $K \subset S^3$, there is a natural map $\alpha: \chr(S^3\setminus K) \to \chr(T^2)$ given by restricting representations of a knot complement to its boundary. The image of this map determines and is (essentially) determined by the $A$-polynomial of $K$ (see \cite{CCG94}), and this polynomial contains a good deal of geometric and topological information about the knot complement. For example, one of the main results of \cite{CCG94} asserts that slopes of the boundary of the Newton polygon of the $A$-polynomial determine boundary slopes of incompressible surfaces of $S^3\setminus K$. Additionally, in \cite{DG04} Dunfield and Garoufalidis used work of Kronheimer and Mrowka \cite{KM04} to show that the $A$-polynomial distinguishes the unknot.

In this paper we study 
a 2-parameter family $\alpha_{t_1,t_2}$ of deformations of the restriction map $\alpha$ from the character variety $\chr(S^3\setminus K)$ to certain affine cubic surfaces in $\C^3$:
\begin{equation}\label{eq_familyintro}
\alpha_{t_1,t_2}: \chr(S^3\setminus K) \to X_{t_1,t_2}
\end{equation}
The special fiber $X_{1,1}$ of this family is isomorphic to the character variety of the torus $T^2$, and the specialization $\alpha_{1,1}$ reproduces the classical restriction map
$ \alpha: \chr(S^3\setminus K) \to X_{1,1}$. 

The $q=-1$ specialization of the main conjecture of \cite{BS16} states that there is a \emph{canonical} deformation of $\alpha$ to the map $\alpha_{t_1,t_2}$. For general $q$, this conjecture involves a quantization of the character variety of the knot complement. It seems generally agreed (at the moment) that the quantization (or $q$-deformation) of character varieties of knots requires the use of topological tools, such as the Kaufmann bracket skein module construction which was used in \cite{BS16}. By contrast, the Hecke (or Dunkl) deformations that we study in the present paper (for $q=-1$) depend only on the knot group and
may be performed purely algebraically (using the Brumfiel-Hilden algebra). This is, perhaps, the main observation of the present paper. (See Figure \ref{fig_objects}.)

Below, we will briefly describe the origin of this conjecture, its relation to the character variety of the 4-punctured sphere, and an interpretation in terms of the Brumfiel-Hilden algebra. 
We then describe our results which confirm this conjecture for an infinite family of knots, including torus knots, 2-bridge knots, some pretzel knots, and all connect sums of these.

\begin{figure}
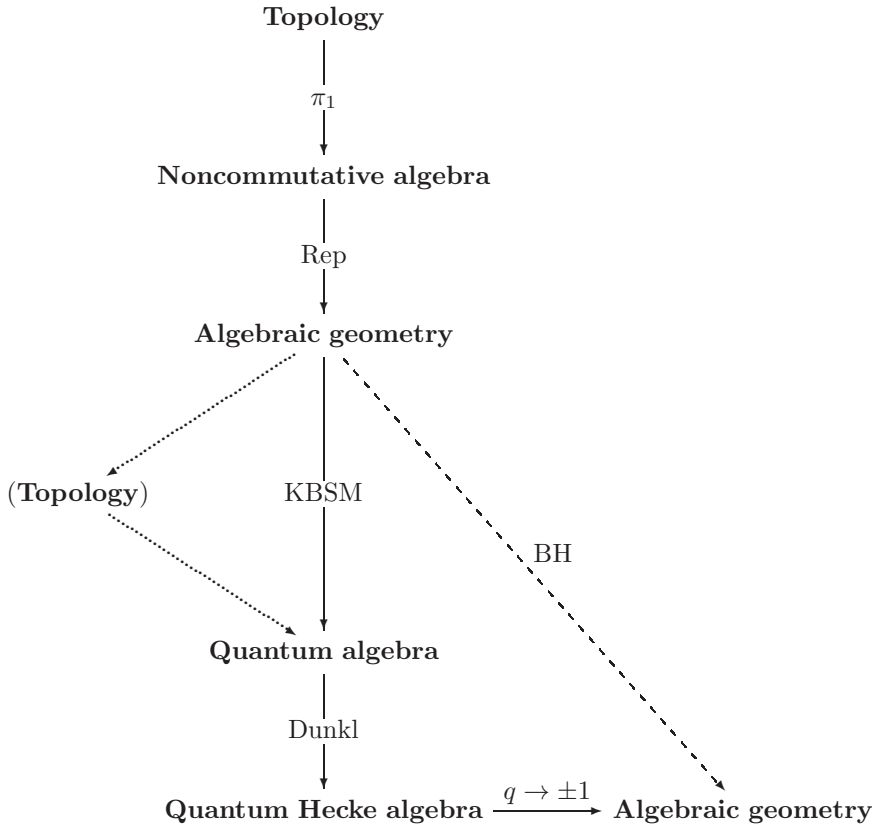

$$
\begin{diagram}
& & & {\bf Topology}  & & &\\
& & &  \dTo~{\pi_1}  & & & \\
& & & {\bf Noncommutative\ algebra}  & & &\\
&  &  & \dTo~{\rm Rep} & & & \\
& &  &  {\bf Algebraic\ geometry} & & &\\
& & \ldDotsto   &    &\quad  \rdDashto(2,6)^{\rm BH} & & \\
&({\bf Topology}) & & \dTo~{\rm KBSM} & & &\\
& & \rdDotsto   &    &  & &\\
& & &  {\bf Quantum\ algebra}  & & & \\
& & & \dTo~{\rm Dunkl}  & & & \\
& &  & {\bf Quantum\ Hecke\ algebra}  & \rTo^{q \to \pm 1\ }  & {\bf Algebraic\ geometry}
\end{diagram}
$$
\caption{Deformations vs. quantizations of character varieties of knots}\label{fig_objects}
\end{figure}

The \emph{Kauffman bracket skein module} $\sk_q(M)$ of a 3-manifold $M$ is the $\C[q^{\pm 1}]$-module spanned by framed, unoriented links in $M$ modulo the Kauffman bracket skein relations (see Figure \ref{fig_kbsm}). If $M = F \times I$ is a thickened surface, then $\sk_q(F\times I)$ is an algebra, where the multiplication is given by stacking one link on top of another. Similarly, the space $\sk_q(M)$ is a module over the algebra $\sk_q((\partial M)\times I)$ associated to the boundary of $M$. 

The (spherical) double affine Hecke algebra (DAHA) $\SH_{q,\ult}$ is a noncommutative algebra depending on a parameter $q$ and four additional parameters $\ult = (t_1,t_2,t_3,t_4)$. It follows easily from a theorem of Frohman and Gelca \cite{FG00} that the specialization $\SH_{q,\ulone}$ is isomorphic to the skein algebra $\sk_q(T^2)$ of the torus. This implies that the skein module $\sk_q(S^3\setminus K)$ of a knot complement is a module over $\SH_{q,1,1,1,1}$. Based on explicit computations in some examples, the following conjecture was proposed in \cite{BS16}.
\begin{conjecture}[{\cite{BS16}}]\label{conj_bs14}
 The double affine Hecke algebra $\SH_{q,t_1,t_2,1,1}$ acts canonically on the Kauffman bracket skein module $\sk_q(S^3\setminus K)$ of the complement of a knot $K \subset S^3$.
\end{conjecture}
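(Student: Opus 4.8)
The plan is to attack the algebraically tractable $q=-1$ specialization, since the general-$q$ version of Conjecture \ref{conj_bs14} appears to need genuinely topological (skein-theoretic) input — as the discussion preceding the conjecture already suggests — while at $q=-1$ everything can be phrased in terms of the Brumfiel--Hilden algebra. \textbf{First}, I would translate the $q=-1$ case into a statement about character varieties: at $q=-1$ the Kauffman bracket skein relations degenerate, so by the classical identification of $\sk_{-1}(M)$ with the coordinate ring of $\chr(M)$ (up to nilpotents) the module $\sk_{-1}(S^3\setminus K)$ becomes $\C[\chr(S^3\setminus K)]$ and the $\sk_{-1}(T^2)$-action becomes the module structure coming from the restriction map $\alpha$. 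On the Hecke side, $\SH_{-1,\ulone}\cong\sk_{-1}(T^2)=\C[\chr(T^2)]=\C[X_{1,1}]$ by the Frohman--Gelca theorem quoted above, and the family $\SH_{-1,t_1,t_2,1,1}$ is the coordinate ring of the cubic surface $X_{t_1,t_2}$. Thus the conjecture at $q=-1$ is equivalent to the assertion that the $\C[X_{1,1}]$-module structure on $\C[\chr(S^3\setminus K)]$ deforms canonically over the two-parameter family of cubic surfaces, restricting at $t_1=t_2=1$ to the classical one — i.e., that $\alpha$ deforms canonically to the family $\alpha_{t_1,t_2}$ of \eqref{eq_familyintro}.

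\textbf{Second}, I would produce this deformation from the Brumfiel--Hilden algebra $\mathcal{H}(K)$. This is an algebra built purely from $\pi_1(S^3\setminus K)$ and its peripheral subgroup; it has a natural commutative quotient recovering $\C[\chr(S^3\setminus K)]$, and it receives a map from the analogous algebra $\mathcal{H}(T^2)$ of the peripheral torus. The peripheral algebra $\mathcal{H}(T^2)$ carries a canonical two-parameter (Dunkl/Hecke) deformation — the induced deformation of $\sk_{-1}(T^2)=\SH_{-1,\ulone}$ being exactly $\SH_{-1,t_1,t_2,1,1}=\C[X_{t_1,t_2}]$ — and the Brumfiel--Hilden conjecture on the \emph{algebraic structure} of $\mathcal{H}(K)$ (roughly, a freeness/generation statement relative to the peripheral part) is precisely the input needed to transport this deformation across all of $\mathcal{H}(K)$, hence to deform the module $\sk_{-1}(S^3\setminus K)$. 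This gives the conjecture at $q=-1$ conditionally on Brumfiel--Hilden, modulo the mild technical hypotheses.

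\textbf{Third}, I would verify the Brumfiel--Hilden conjecture for concrete families. Torus knots, $2$-bridge knots, and certain pretzel knots have small explicit group presentations (two generators suffice for the first two), so $\mathcal{H}(K)$ and the peripheral inclusion can be written down and the structural claim checked by direct computation. For closure under operations I would show $\mathcal{H}(K_1\# K_2)$ is assembled from $\mathcal{H}(K_1)$ and $\mathcal{H}(K_2)$ by an amalgamation over the common meridian, and that for a knot obtained as a suitable covering the associated $\mathcal{H}$ is built from that of the base by a base-change/induction functor; in each case one checks that the Brumfiel--Hilden structural property is inherited from the pieces.

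\textbf{The main obstacle} is the extension in the second step: knowing that $\mathcal{H}(K)$ merely exists is not enough — one must show the deformation of $\mathcal{H}(T^2)$ genuinely lifts over $\mathcal{H}(K)$ with \emph{all} the double affine Hecke relations satisfied and no extra relations forced, so that one lands in an honest module over $\SH_{-1,t_1,t_2,1,1}$ rather than some larger quotient. It is exactly here — rather than in the definition of $\mathcal{H}(K)$ — that the Brumfiel--Hilden conjecture is indispensable, and checking that the deformed peripheral operators close up is the technical heart of the argument. A secondary, genuine limitation is that this route reaches only $q=-1$: the general-$q$ version of the conjecture appears to require skein-theoretic input invisible to the Brumfiel--Hilden algebra.
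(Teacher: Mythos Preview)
Your outline is correct and matches the paper's strategy: the statement is a conjecture that the paper does not prove in general, and your three steps---reduce $q=-1$ to a deformation of the peripheral map via Bullock's theorem, lift the deformation through the Brumfiel--Hilden algebra conditionally on the BH condition, then verify BH for torus, $2$-bridge, and pretzel knots and check closure under connect sum and coverings---are exactly Corollary~\ref{cor_BH_def}, Proposition~\ref{prop_3conditions}, and Sections~\ref{sec_connectsum}--\ref{sec_twobridge}. The one place the paper is sharper than your sketch is the mechanism in step two: the BH condition is not a freeness statement but the concrete inclusion $L\in H^+[M^{\pm 1}]$, and Proposition~\ref{prop_3conditions} shows this is equivalent to the existence of an explicit module $N=H^+[M^{\pm 1}]+H^+[M^{\pm 1}](L+1)\delta^{-1}\subset H[\delta^{-1}]$ satisfying $(s+L)\cdot N\subset\delta N$ and $\e N=H^+$, which is precisely the condition that the pole $\delta^{-1}$ in the Dunkl operator $T_2$ cancels on $N$---so the ``closing up'' obstacle you flag is resolved by building $N$ rather than by checking relations abstractly.
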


The connection between this conjecture and character varieties follows from a theorem of Bullock \cite{Bul97} (see also \cite{PS00}), which states that the $q=-1$ specialization of the skein module $\sk_{q=-1}(M)$ is a commutative ring which is isomorphic to $\O(\chr(M))$. It turns out that in the $q=\pm 1$ specialization the DAHA $\SH_{q=\pm 1,\ult}$ is commutative, and it was studied thoroughly by Oblomkov in \cite{Obl04}. It follows from \cite{Obl04} and work of Goldman in \cite{Gol97} that $\SH_{1,\ult}$ is isomorphic to the ring of functions $\O\chr(S^2\setminus \{p_1,p_2,p_3,p_4\})$ on the character variety of the 4-punctured sphere. (In fact, it follows from work of Bullock and Przytycki \cite{BP00} and Terwilliger \cite{Ter13} that $\SH_{q^2,\ult}$ is isomorphic to the skein algebra $\sk_q(S^2\setminus \{p_1,p_2,p_3,p_4\})$.)

We now describe the construction of the family \eqref{eq_familyintro} of maps that deforms the peripheral map. One of the key properties of the DAHA is the so-called `Dunkl embedding,' which is an injective algebra homomorphism $\SH_{q,\ult} \hookrightarrow \SH_{q,\ulone}^\loc$ into a localization of the DAHA at $t_i = 1$. When $q=-1$, this becomes a rational map $X_{\ulone} \dashrightarrow X_{\ult}$. In this language, Conjecture \ref{conj_bs14} becomes: 
\begin{conjecture}(Conjecture \ref{conj_bs14} at $q=-1$): The composition of the restriction map $\chr(S^3\setminus K) \to X_{\ult = \ulone}$ with the Dunkl embedding $X_{\ult = \ulone} \dashrightarrow X_{t_1,t_2,1,1}$ extends to a regular morphism of affine schemes. 
\end{conjecture}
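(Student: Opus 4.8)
The plan is to translate the assertion into a statement about (denominator-free) algebra homomorphisms and then to supply the missing denominators using the Brumfiel--Hilden algebra. Producing a regular morphism $\alpha_{t_1,t_2}\colon\chr(S^3\setminus K)\to X_{t_1,t_2}$ extending the Dunkl-deformed peripheral map is equivalent to producing a homomorphism $\O(X_\ult)\too\O(\chr(S^3\setminus K))$, of algebras over the parameter ring $\C[t_1^{\pm 1},t_2^{\pm 1}]$, which coincides with $\alpha^{\ast}$ followed by the Dunkl embedding once the denominators of the latter are inverted. By Bullock's theorem $\sk_{-1}(S^3\setminus K)\cong\O(\chr(S^3\setminus K))$; by definition (together with Oblomkov's computations) $X_\ult=\Spec\SH_{-1,\ult}$ for the commutative DAHA specialization with $t_3=t_4=1$; and by the Frohman--Gelca isomorphism $\SH_{-1,\ulone}\cong\sk_{-1}(T^2)$ the classical map $\alpha^{\ast}$ is the peripheral action homomorphism $\sk_{-1}(T^2)\too\sk_{-1}(S^3\setminus K)$ induced by $T^2\into S^3\setminus K$. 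Thus the task reduces to extending this peripheral action along the Dunkl embedding $\SH_{-1,\ult}\hookrightarrow\SH_{-1,\ulone}^{\loc}$ to an honest (non-localized) action of $\SH_{-1,\ult}$ on $\sk_{-1}(S^3\setminus K)$.

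The bridge is the Brumfiel--Hilden algebra $\mathcal H_K$ of the pair $\bigl(\pi_1(S^3\setminus K),\mu\bigr)$, and I would proceed in four steps. \emph{(i)} Recall that $\mathcal H_K$ is an algebra over $\C[x]$ ($x$ the universal trace of the meridian) carrying a canonical involution, from which $\sk_{-1}(S^3\setminus K)$ is recovered as the associated symmetric (trace) subalgebra. \emph{(ii)} Using the inclusion of peripheral groups $\langle\mu,\lambda\rangle\hookrightarrow\pi_1(S^3\setminus K)$, construct a canonical homomorphism into a suitable extension of $\mathcal H_K$ from the ``peripheral model'' --- the algebra that in the $q=-1$ commutative specialization is $\SH_{-1,\ult}$ --- arranging that the two remaining parameters $t_1,t_2$ are matched, as in \cite{BS16}, with invariants of the peripheral system of $K$ and that the Dunkl embedding is the tautological localization of the $\ult=\ulone$ model. \emph{(iii)} Observe that this homomorphism is defined \emph{without} inverting anything precisely when $\mathcal H_K$ is free over the relevant central subalgebra --- which is exactly the content of the Brumfiel--Hilden conjecture. \emph{(iv)} Restrict to symmetric subalgebras to obtain $\SH_{-1,\ult}\too\sk_{-1}(S^3\setminus K)$, and verify that inverting the Dunkl denominators returns $\alpha^{\ast}$ composed with the Dunkl embedding, so that the induced morphism of affine schemes is the desired $\alpha_{t_1,t_2}$ and specializes to $\alpha$ at $\ult=\ulone$.

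I expect step \emph{(iii)}, the clearing of denominators, to be the crux. The denominators of the Dunkl embedding are supported on the ``pillowcase discriminant'' $\{x^{2}=4\}\subset X_\ulone$, i.e.\ on the reducible locus of $\chr(\langle\mu,\lambda\rangle)$; since knot groups have abundant reducible representations, it is far from formal that the Dunkl-deformed peripheral coordinates --- \emph{a priori} merely rational functions on $\chr(S^3\setminus K)$ --- extend regularly across that locus. Freeness of $\mathcal H_K$ over $\C[x]$ is precisely what rules out an uncontrolled pole: it forbids unexpected divisibility by $x-2$ (and the companion exceptional factors), forcing the deformed generators into $\sk_{-1}(S^3\setminus K)$ rather than into a proper localization. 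The ``mild technical conditions'' of the statement are reducedness-type hypotheses --- needed so that $\Spec\sk_{-1}(S^3\setminus K)$ is the honest character variety and so that the symmetric-subalgebra identifications of \emph{(i)} and \emph{(iv)} behave well --- and they hold in all the examples treated in the paper.

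Finally, for the \emph{unconditional} cases one would invoke the verification of the Brumfiel--Hilden conjecture for torus knots, $2$-bridge knots, and the pretzel knots treated later, together with its stability under connected sum and under the relevant knot coverings; combined with the reduction above, this yields the regular morphism $\alpha_{t_1,t_2}$ for every knot in that class.
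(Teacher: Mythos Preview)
Your overall architecture is right --- pass from the symmetric (spherical) picture to the full Brumfiel--Hilden algebra, clear the Dunkl denominators there, then project back --- and this is exactly the paper's strategy. But several of the specifics are misidentified, and the key technical object is missing.

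First, the Brumfiel--Hilden conjecture is \emph{not} the assertion that $H[\pi]$ (your $\mathcal H_K$) is free over $\C[x]$. It is the assertion that the longitude $Y$ lies in the subalgebra $H^+[X^{\pm 1}]$ generated by $H^+[\pi]$ and the meridian $X^{\pm 1}$; equivalently, that $Y - Y^{-1} \in H^+\cdot(X-X^{-1})$. Freeness over $\C[x]$ is neither assumed nor needed. Likewise, the ``mild technical condition'' is not a reducedness hypothesis on the character variety: it is the injectivity of multiplication by $\delta := X - X^{-1}$ on a specific $H^+$-module $N$ (defined below). And the parameters $t_1,t_2$ are free deformation parameters, not invariants extracted from the peripheral system of $K$.

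Second, your step (iii) is where the real content lies, and the paper's mechanism for it is concrete rather than an appeal to freeness. One introduces the auxiliary module
\[
N \;:=\; H^+[X^{\pm 1}] \;+\; H^+[X^{\pm 1}]\,(Y+1)\,\delta^{-1} \;\subset\; H[\pi][\delta^{-1}],
\]
and proves that the Brumfiel--Hilden condition $Y \in H^+[X^{\pm 1}]$ is \emph{equivalent} to the pair of conditions $\e N = H^+$ and $(s+Y)\cdot N \subset \delta N$, where $\e = (1+s)/2$ and $s$ is the inversion involution. Under the Dunkl embedding at $q=-1$, $t_3=t_4=1$, the only pole of the generator $T_2$ of $\H_{-1,t_1,t_2,1,1}$ has the form $\delta^{-1}(s+Y)$; the containment $(s+Y)N \subset \delta N$, together with the injectivity of $\delta$ on $N$, is precisely what lets $T_2$ act on $N$ without localizing. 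One then restricts to the spherical subalgebra acting on $\e N = H^+ \cong \O\chr(S^3\setminus K)$, which gives the regular morphism. Your outline never names $N$, and without it there is no visible reason why the denominator $\delta$ should cancel: the cancellation is not a consequence of any module-theoretic freeness of $H[\pi]$ but of the specific algebraic identity $Y - Y^{-1} \in H^+\delta$ encoded in the Brumfiel--Hilden condition.
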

We remark that this is not automatic because the poles of the rational map $X_\ulone \to X_\ult$ contain the trivial representation, and the poles therefore intersect the image of the map $\chr(S^3\setminus K) \to X_\ulone$ for any knot $K$.
In this paper we confirm this conjecture at $q=-1$ for an infinite class of knots (at least up to a technical condition, see Corollary \ref{cor_BH_def}). 

A useful tool for studying character varieties of a discrete group $\pi$ is the \emph{Brumfiel-Hilden algebra}, which we denote $H[\pi]$. This algebra (and its `trace' subalgebra) are defined by
\[
H[\pi] := \frac{\C[\pi]}{\{h(g+g^{-1}) = (g+g^{-1})h\} },\quad \quad H^+[\pi] := \{a \in H[\pi] \mid a = \sigma(a)\}
\]
where $\sigma: H[\pi] \to H[\pi]$ is the anti-automorphism defined on group elements via $\sigma(a) = a^{-1}$. The key theorem (see \cite[Prop.~9.1]{BH95}) is that $\O\chr(\pi) \cong H^+[\pi]$.

We show that Conjecture \ref{conj_bs14} (at $q=-1$) has a  natural interpretation in terms of $H[\pi]$, where $\pi$ is the fundamental group $\pi_1(S^3\setminus K)$ of the knot complement. We will call the condition in the following conjecture the \emph{Brumfiel-Hilden condition}.

\begin{conjecture}[{\cite{BH95}}] Let $M$ and $L$ be the standard meridian and longitude of the knot. Then
\begin{equation}\label{eq_bhintro}
L \in H^+[M^{\pm 1}]
\end{equation}
where $H^+[M^{\pm 1}]$ is the subalgebra of $H[\pi]$ generated by $H^+[\pi]$ and $M^{\pm 1}$. 
\end{conjecture}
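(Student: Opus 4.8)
The plan is to recast the Brumfiel--Hilden condition as a single divisibility relation inside $H[\pi]$, verify that relation for a few basic families, and show it propagates under connect sum and the relevant coverings. First, the reformulation. In $H[\pi]$ every element $t_g:=g+g^{-1}$ is central (this is the defining relation), and the trace identity $t_gt_h=t_{gh}+t_{gh^{-1}}$ together with the torsion-freeness of $\pi$ shows that $H^+[\pi]$ is exactly the central subalgebra generated by the $t_g$, which is spanned over $\C$ by $1$ together with the $t_g$. Since $t_M=M+M^{-1}\in H^+[\pi]$, the recursion $M^{n+1}=t_MM^n-M^{n-1}$ gives $H^+[M^{\pm1}]=H^+[\pi]+H^+[\pi]\,M$ as a module over the central ring $H^+[\pi]$; hence $L\in H^+[M^{\pm1}]$ means $L=c+bM$ with $b,c\in H^+[\pi]$, and applying the anti-automorphism $\sigma$ (which fixes $H^+[\pi]$ and sends $M\mapsto M^{-1}$) and subtracting gives the equivalent form
\begin{equation}\label{eq_BHreduction}
L\in H^+[M^{\pm1}]\ \Longleftrightarrow\ L-L^{-1}=b\,(M-M^{-1})\ \text{ for some }\ b\in H^+[\pi].
\end{equation}
So the conjecture says that the traceless part of $L$ is divisible by that of $M$, with quotient a regular function on $\chr(\pi)$.

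For torus knots $K=T(p,q)$, with $\pi=\langle x,y\mid x^p=y^q\rangle$, the central element $z=x^p=y^q$ is the regular Seifert fibre and the longitude is $L=z\,M^{-pq}$. Writing $N=pq$, using $M^{\pm N}=U_{N-1}(t_M)M^{\pm1}-U_{N-2}(t_M)$ (where $U_n$ is the Chebyshev-type polynomial determined by $g^n=U_{n-1}(t_g)\,g-U_{n-2}(t_g)$ in $H[\pi]$), and splitting $z$ into its $\sigma$-symmetric and $\sigma$-antisymmetric parts, a short manipulation gives
\[
L-L^{-1}\;=\;-\tfrac12\,U_{N-1}(t_M)\,(z+z^{-1})\,(M-M^{-1})\;+\;\tfrac12\,(M^{N}+M^{-N})\,(z-z^{-1}).
\]
Since $z+z^{-1}$ and $M^{N}+M^{-N}$ already lie in $H^+[\pi]$, by \eqref{eq_BHreduction} it suffices to show that $z-z^{-1}$ is divisible by $M-M^{-1}$ in $H[\pi]$ with quotient in $H^+[\pi]$. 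This follows from the classical description of $\chr(T(p,q))$ as the abelian line together with the arcs of irreducible characters: on the abelian part $z-z^{-1}$ equals $U_{pq-1}(t_M)\,(M-M^{-1})$, on the irreducible arcs $z-z^{-1}$ vanishes, and these glue to a regular $b_0\in H^+[\pi]$ with $z-z^{-1}=b_0\,(M-M^{-1})$ because $U_{pq-1}(t_M)$ vanishes at the finitely many characters where the two kinds of components meet; alternatively one argues directly from the presentation, using $z-z^{-1}=U_{p-1}(t_x)(x-x^{-1})=U_{q-1}(t_y)(y-y^{-1})$, the explicit meridian $M=x^ay^b$ with $aq+bp=1$, and $\gcd(p,q)=1$.

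For 2-bridge knots one takes the Schubert presentation $\pi=\langle a,b\mid \Lambda a=b\Lambda\rangle$ with $a,b$ meridians (so $M=a$ and $b=\Lambda a\Lambda^{-1}$) and longitude $L=\overleftarrow{\Lambda}\,\Lambda\,a^{-2e}$; verifying \eqref{eq_BHreduction} is then a finite computation, folding the palindromic word $\overleftarrow{\Lambda}\Lambda$ inward by repeated use of $s^2=t_Ms-1$ and of the defining relation, essentially as in \cite{BH95}. Certain pretzel knots are handled in the same manner, or reduced to torus and 2-bridge pieces. Closure under connect sum is then formal: for $K=K_1\#K_2$ one has $\pi=\pi_1*_{\langle M\rangle}\pi_2$ with $M=M_1=M_2$ and $L=L_1L_2$, and the images of $H^+[\pi_i]$ in $H[\pi]$ are central, so writing $L_i=c_i+b_iM$ (the hypothesis applied to $K_i$) one gets
\[
L=L_1L_2=(c_1c_2-b_1b_2)+(c_1b_2+b_1c_2+b_1b_2\,t_M)\,M\ \in\ H^+[M^{\pm1}].
\]
For the relevant coverings and satellite operations one proceeds similarly: decompose $\pi$ as an amalgam or HNN extension along a peripheral subgroup, express the new meridian and longitude in terms of the old data, and push \eqref{eq_BHreduction} through the central coefficients.

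The main obstacle is the base cases, namely upgrading the ``generic'' divisibilities to honest identities in $H[\pi]$. For torus knots the point is that $z-z^{-1}$ must be \emph{actually} divisible by $M-M^{-1}$ in $H[\pi]$ with central quotient; this already fails in $H[\langle M,L\rangle]$, so the defining relation $x^p=y^q$ has to enter in an essential way, and this is presumably also where the mild technical hypothesis of Corollary~\ref{cor_BH_def} is required. For 2-bridge knots the bookkeeping in the palindrome reduction must be controlled carefully enough that the reduced form of $L$ lands in the submodule $H^+[\pi]+H^+[\pi]M$ and not in a larger submodule of $H[\pi]$. Once the building blocks are in place, the closure statements are formal consequences of \eqref{eq_BHreduction} and the centrality of $H^+[\pi]$.
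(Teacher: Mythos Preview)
Your reformulation \eqref{eq_BHreduction} is exactly the equivalence $(1)\Leftrightarrow(2)$ of Proposition~\ref{prop_3conditions} in the paper, and your connect-sum argument is essentially Lemma~\ref{lemma_connectsum}. So the framing and the closure-under-connect-sum parts are fine and match the paper.

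The genuine gap is in the torus-knot base case. Your proposed route---write $L=zM^{-pq}$ and reduce to showing $z-z^{-1}\in H^+[\pi]\cdot(M-M^{-1})$---is not completed. The ``geometric'' argument (check divisibility on abelian and irreducible components of $\chr(T(p,q))$ separately and glue) is about scalar functions on the character variety, whereas $z-z^{-1}$ and $M-M^{-1}$ are not in $H^+[\pi]$; they live in the noncommutative algebra $H[\pi]$, and divisibility there does not follow from vanishing of matrix-valued functions on irreducible components. You acknowledge this yourself (``this already fails in $H[\langle M,L\rangle]$, so the defining relation has to enter in an essential way''), but the proposal does not supply the missing algebraic step. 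The paper's approach is quite different and is what actually makes this work: it builds an explicit presentation of $H[\pi]$ for two-generator one-relator groups as a quotient of an Ore extension $R[t;\sigma,\delta]/(t^2-y^2+1)$ (Appendix), computes the longitude as $r_0+r_1t$, and shows $r_1$ lies in an explicit ideal $J\subset R$ via Chebyshev identities. Even then the direct computation is only done for $K(p,p+1)$ and $K(p,2p+1)$, and the general $(r,s)$ case is obtained by constructing epimorphisms $\pi_1(K(p,p+1))\twoheadrightarrow\pi_1(K(r,s))$ preserving peripheral structure and invoking Lemma~\ref{lemma_covering} together with a small number-theoretic lemma.

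For 2-bridge knots you defer to \cite{BH95}; the paper instead relies on the explicit module $M=H^+[X^{\pm1}]+H^+[X^{\pm1}]Q\delta^{-1}$ from \cite{BS16} and proves it coincides with the canonical module $N$ of \eqref{eq_defN}. Pretzel knots are not ``reduced to torus and 2-bridge pieces'' as you suggest; the paper does a direct computation in the Ore-extension model (Section~\ref{sec_pretzel}), verified by computer for $n\le 20$.
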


We prove the following theorem (see Corollary \ref{cor_BH_def}):
\begin{theorem}\label{thm_bhusintro}
If the Brumfiel-Hilden condition \eqref{eq_bhintro} holds and 
$(M-M^{-1}):H^+[\pi] \to H^+[\pi][M^{\pm 1}]$ 
is injective, then Conjecture \ref{conj_bs14} holds at $q=-1$.
\end{theorem}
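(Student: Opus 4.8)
By Bullock's theorem~\cite{Bul97}, the Frohman--Gelca isomorphism~\cite{FG00}, and \cite[Prop.~9.1]{BH95}, there are ring isomorphisms $\O(X_{\ulone})\cong\SH_{-1,\ulone}\cong\sk_{-1}(T^2)$ and $\O\chr(S^3\setminus K)\cong H^+[\pi]$, where $\pi=\pi_1(S^3\setminus K)$, under which the classical restriction map $\alpha$ becomes a $\C$-algebra homomorphism $\alpha^*\colon\O(X_{\ulone})\to H^+[\pi]$ carrying the trace-of-meridian and trace-of-longitude functions to $M+M^{-1}$ and $L+L^{-1}$. Put $\delta:=M-M^{-1}\in H[\pi]$: it is $\sigma$-anti-invariant, while $\delta^2=(M+M^{-1})^2-4\in H^+[\pi]$ is, up to a unit, the image under $\alpha^*$ of the discriminant inverted by the Dunkl embedding $\SH_{-1,\ult}\hookrightarrow\SH_{-1,\ulone}^{\loc}$. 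Composing the Dunkl pullback with the localization of $\alpha^*$ therefore yields, for \emph{every} knot, a $\C[t_1,t_2]$-algebra homomorphism
\[
\tilde\psi\colon\quad \O(X_{t_1,t_2,1,1})\ \longrightarrow\ H^+[\pi]\bigl[\delta^{-2}\bigr][t_1,t_2],
\]
which restricts to $\alpha^*$ at $t_1=t_2=1$. The reformulation of Conjecture~\ref{conj_bs14} at $q=-1$ recorded above is then equivalent to the single assertion that the image of $\tilde\psi$ lies in $H^+[\pi][t_1,t_2]$.

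\textbf{Step 2: clearing the poles with the Brumfiel--Hilden condition.}
I would compute the images under $\tilde\psi$ of the three standard generators of $\SH_{-1,t_1,t_2,1,1}$ in its cubic-surface presentation, working inside the larger ring $H^+[M^{\pm1}][\delta^{-1}][t_1,t_2]$ (legitimate because $H^+[\pi]\subseteq H^+[M^{\pm1}]$ and $\delta\in H^+[M^{\pm1}]$), where the individual residue terms $\delta^{-1}$ of the Dunkl operators make sense. Since $M+M^{-1}$ is central in $H[\pi]$ and $M^2=(M+M^{-1})M-1$, the algebra $H^+[M^{\pm1}]$ is spanned over $H^+[\pi]$ by $1$ and $M$; hence the Brumfiel--Hilden condition $L\in H^+[M^{\pm1}]$ lets me write $L=a+bM$ with $a,b\in H^+[\pi]$, so that $L^{-1}=\sigma(L)=a+M^{-1}b$ and $L-L^{-1}=bM-M^{-1}b$. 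Substituting these (and the companion expressions for $ML$ and $ML^{-1}$) into the explicit Dunkl formulas, I expect each term carrying a factor $\delta^{-1}$ — these are the residues of the Dunkl operators along the locus $M=\pm1$, appearing multiplied by the deformation factors $t_i-t_i^{-1}$ — to be in fact multiplied by an element of $\delta\cdot H^+[M^{\pm1}][t_1,t_2]$, so that all poles cancel and each generator maps into $H^+[M^{\pm1}][t_1,t_2]$. Tracking the anti-automorphism $\sigma$ (which fixes $H^+[\pi]$ and swaps $M\leftrightarrow M^{-1}$) should then show the three images are $\sigma$-fixed, hence lie in $H^+[\pi][t_1,t_2]$.

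\textbf{Step 3: well-definedness, uniqueness, and conclusion.}
Finally I would check that the three candidate images in $H^+[\pi][t_1,t_2]$ still satisfy the defining cubic relation of $X_{t_1,t_2,1,1}$, so that $\tilde\psi$ corestricts to a $\C[t_1,t_2]$-algebra homomorphism $\O(X_{t_1,t_2,1,1})\to H^+[\pi][t_1,t_2]$, and that this homomorphism is unique (hence canonical). This is where the second hypothesis enters: the injectivity of $(M-M^{-1})\colon H^+[\pi]\to H^+[M^{\pm1}]$ allows one to cancel $\delta$ unambiguously in Step~2 and forces $H^+[\pi][t_1,t_2]$ to sit inside the $\delta$-localized ring in which $\tilde\psi$ a priori takes values, so that the cubic relation — which holds there automatically, $\tilde\psi$ being a ring homomorphism — descends to $H^+[\pi][t_1,t_2]$. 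Dualizing, this produces for each $(t_1,t_2)$ a morphism $\alpha_{t_1,t_2}\colon\chr(S^3\setminus K)\to X_{t_1,t_2}$ agreeing with $\alpha$ at $t_1=t_2=1$ and with the Dunkl composition wherever $\delta\neq0$, which is exactly Conjecture~\ref{conj_bs14} at $q=-1$.

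\textbf{Main obstacle.}
The crux is the cancellation in Step~2: one must write out the Dunkl formulas explicitly — in particular keeping track of the shift operators on the longitude coming from the dual-torus direction of the DAHA — read them inside $H^+[M^{\pm1}][\delta^{-1}][t_1,t_2]$, and verify that substituting the Brumfiel--Hilden identity $L=a+bM$ kills every pole along the locus $\delta=0$, which (as remarked after the statement of Conjecture~\ref{conj_bs14}) necessarily meets the image of $\alpha$ since it contains the trivial representation. The reduction in Step~1, the $\sigma$-invariance check, and the descent in Step~3 are comparatively routine once this cancellation is in hand.
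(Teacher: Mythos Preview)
Your outline is in the right spirit and, once the computation in Step~2 is actually carried out, it does succeed --- but it takes a genuinely different route from the paper's. The paper does \emph{not} work directly with the spherical generators and the symmetric Dunkl formulas of Corollary~\ref{cor_symdunkl}. Instead it passes to the full (nonspherical) DAHA $\H_{-1,t_1,t_2,1,1}$ and introduces an auxiliary $H^+$-module
\[
N \;:=\; H^+[X^{\pm 1}] \,+\, H^+[X^{\pm 1}](Y+1)\delta^{-1}\ \subset\ H[\delta^{-1}],
\]
then proves (Proposition~\ref{prop_3conditions}) that the Brumfiel--Hilden condition $Y\in H^+[X^{\pm 1}]$ is \emph{equivalent} to the two conditions $\e N=H^+$ and $(s+Y)\cdot N\subset \delta N$. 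The payoff is that the only nontrivial Dunkl operator at the nonspherical level is $T_2$, whose pole is precisely $\delta^{-1}(s+Y)$; the second condition says exactly that this pole is removable on $N$, and injectivity of $\delta$ makes the division unambiguous. One then applies $\e$ at the very end to get the spherical action on $\e N=H^+$. This is cleaner than your approach because the nonspherical pole is first order in $\delta$ rather than second, and the reformulation $(s+Y)N\subset\delta N$ is tailor-made for it.

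Your direct spherical computation is feasible --- writing $L=a+b\delta$ with $a,b\in H^+$, one finds $y_T\mapsto 2a$ and $z_T\mapsto ax_T+b\delta^2$, and substituting into the numerators of Corollary~\ref{cor_symdunkl} each term is visibly divisible by $\delta^2$ --- but you have not done it, and your remark about ``shift operators on the longitude'' is off: at $q=-1$ the quantum torus $A_q$ is commutative, so under the peripheral map $\hat y$ simply becomes multiplication by the longitude in $H[\pi]$, with no shifts to track. The injectivity hypothesis enters for you as the uniqueness of $a,b$ (equivalently, $H^+[X^{\pm 1}]=H^+\oplus H^+\delta$), which is what makes your formulas well-defined; in the paper it appears as injectivity of $\delta$ on $N$, needed to invert $\delta$ after applying $(s+Y)$. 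The paper's approach has the bonus of producing an action of the \emph{full} DAHA on the larger module $N$, not merely the spherical algebra on $H^+$.
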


In further support of both these conjectures, we prove the following.

\begin{theorem}\label{thm_mainintro}
The Brumfiel-Hilden condition \eqref{eq_bhintro} holds for all torus knots, 2-bridge knots, and certain $(-2,3,2n+1)$ pretzel knots. Furthermore, if it holds for knots $K$ and $K'$, then it holds for their connect sum $K\# K'$.
\end{theorem}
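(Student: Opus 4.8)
The plan is to handle each of the four assertions in Theorem~\ref{thm_mainintro} by a separate argument, all organized around producing an explicit presentation of the knot group $\pi = \pi_1(S^3\setminus K)$ in which the longitude $L$ can be visibly rewritten inside the subalgebra $H^+[M^{\pm 1}]$ of $H[\pi]$. The key structural observation to exploit is that membership $L \in H^+[M^{\pm 1}]$ is a statement about the peripheral data that only sees the knot group together with its peripheral subgroup, so it should be amenable to the standard manipulations available for each of these well-understood knot families. Throughout I would work with the "trace-algebra" description $\O\chr(\pi) \cong H^+[\pi]$ from \cite{BH95}, and reduce the Brumfiel–Hilden condition to a finite linear-algebra check over the words in $H[\pi]$.

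\emph{Torus knots.} For the $(p,q)$ torus knot, $\pi = \langle a,b \mid a^p = b^q\rangle$, the center is generated by $z := a^p = b^q$, the meridian is $M = a^{s}b^{r}$ for suitable $r,s$ with $ps+qr$ normalized appropriately, and the longitude is $L = z M^{-pq}$ (up to sign conventions). Since $z$ is central, it commutes with $M+M^{-1}$, so $z \in H^+[\pi]$ automatically — this is the crucial point — and $M^{-pq} \in H^+[M^{\pm 1}]$ trivially (powers of $M$ and their inverses are $\sigma$-symmetric in the span sense once one expands $M^{-pq}$ in the $\{M^k + M^{-k}\}$ basis together with the central element). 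Hence $L = zM^{-pq} \in H^+[M^{\pm 1}]$. The only care needed is to verify $z \in H^+[\pi]$ as opposed to merely "$z$ commutes with meridian symmetrizations," i.e. that $\sigma(z) = z$: but $z$ is a product of $p$-th and $q$-th powers and $\sigma$ inverts, and centrality lets one conjugate $z^{-1}$ back to $z$ — I would record this as a short lemma that central elements of $\pi$ of infinite order lie in $H^+[\pi]$ after symmetrizing, then apply it.

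\emph{2-bridge knots.} Here $\pi = \langle a, b \mid aw = wb\rangle$ with $a,b$ conjugate meridians and $w = w(a,b)$ a palindromic-type relator word; the longitude has the classical form $L = \overleftarrow{w}\, w\, a^{2e}$ where $\overleftarrow{w}$ is $w$ written backwards and $e$ is an integer making $L$ commute with $M = a$. The strategy is to show directly that $\overleftarrow{w}w$ is fixed by $\sigma$ (using $\sigma(uv) = \sigma(v)\sigma(u)$ and the palindromic structure of $w$), hence lies in $H^+[\pi]$, while $a^{2e} = M^{2e} \in H[M^{\pm 1}]$; the product then lies in $H^+[M^{\pm 1}]$ by the subalgebra property. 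The main obstacle in this case is bookkeeping: one must pin down the precise form of $w$, $\overleftarrow w$, and the exponent $e$ (Riley's conventions versus others differ by inversions/conjugations), and check that the relation $aw = wb$ is used correctly to move $b$'s past $a$'s so that the residual word really is $\sigma$-symmetric. I expect this palindrome verification to be the technical heart of the whole theorem, since the pretzel case will be proved by reducing to a similar but messier word computation.

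\emph{Pretzel knots and connect sums.} For the $(-2,3,2n+1)$ pretzel knots I would either exhibit an explicit two-generator presentation (these are known, e.g. via their status as Montesinos knots, and for small $|n|$ they are 2-bridge or torus knots so are already covered) and run the palindrome argument, or — and this is likely cleaner — invoke a closure property: reduce to checking the condition on a finite-index subgroup or on a 2-bridge "building block." For connect sums, the van Kampen presentation gives $\pi_1(S^3\setminus K\#K') = \pi_1(S^3\setminus K) *_{\langle M\rangle} \pi_1(S^3\setminus K')$ amalgamated over the common meridian, with longitude $L_{K\#K'} = L_K L_{K'}$ (both already commuting with $M$). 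So if $L_K \in H^+[M^{\pm1}]$ inside $H[\pi_1(S^3\setminus K)]$ and likewise for $K'$, and if the amalgamation is compatible with $\sigma$ (it is, since $\sigma$ is defined by $g\mapsto g^{-1}$ on all of each free factor and $M\mapsto M^{-1}$ agrees on the overlap), then $L_K L_{K'}$ is a product of two elements of $H^+[M^{\pm1}]$, hence in $H^+[M^{\pm1}]$. The one subtlety to check here is that $H^+[M^{\pm1}]$ really is closed under multiplication (it is the subalgebra \emph{generated} by $H^+[\pi]$ and $M^{\pm1}$, so this is by definition), and that no new relations in the amalgamated product obstruct the symmetrization — which follows because amalgamated free products embed their factors, so the computation done in each $H[\pi_1(S^3\setminus K)]$ remains valid in $H[\pi_1(S^3\setminus K\#K')]$.
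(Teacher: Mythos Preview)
Your connect-sum argument is essentially the paper's (Lemma~\ref{lemma_connectsum}), and your 2-bridge sketch, while different in flavor from the paper's reliance on explicit \cite{BH95}-style computations, is at least a plausible line of attack. The torus-knot argument, however, has a genuine gap that undermines the whole case.

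You claim that the central element $z = a^p = b^q$ lies in $H^+[\pi]$ (or at least in $H^+[M^{\pm 1}]$), and propose a ``short lemma that central elements of $\pi$ of infinite order lie in $H^+[\pi]$ after symmetrizing.'' This lemma is false. Take $\pi = \Z$ with generator $g$: since $\pi$ is abelian the BH relations are vacuous, so $H[\pi] = \C[g^{\pm 1}]$ and $H^+[\pi] = \C[g+g^{-1}]$; the central element $g$ does not lie in $H^+[\pi]$. Centrality of $z$ in $\pi$ tells you $z$ commutes with everything in $H[\pi]$, but $\sigma(z) = z^{-1}$, and ``conjugating $z^{-1}$ back to $z$'' is impossible precisely because $z^{-1}$ is also central---conjugation fixes it. The condition $z \in H^+[M^{\pm 1}]$ is equivalent (via Proposition~\ref{prop_3conditions}) to $z - z^{-1} \in H^+ \cdot (M - M^{-1})$, and this is the entire content of the theorem for torus knots, not a preliminary remark.

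The paper's actual proof reflects how nontrivial this is: it first handles $(p,p+1)$ torus knots by an explicit Chebyshev-polynomial computation in the BH algebra (showing $U_p(y)$ lies in a certain ideal $J_1$), then constructs group epimorphisms $\pi_1(K(p,p+1)) \twoheadrightarrow \pi_1(K(r,s))$ preserving peripheral data up to powers, and finally uses a number-theoretic argument (Lemma~\ref{lemma_numbertheory}) to show that the longitude exponents arising from all such coverings generate $\Z$, whence the general case follows from Lemma~\ref{lemma_covering}. A separate computation is needed for $(p,2p+1)$ knots to handle the $rs$-odd case. None of this machinery would be necessary if centrality sufficed.
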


The contents of the paper are as follows. In Section \ref{sec_daha_charvarieties} we recall background information about character varieties and double affine Hecke algebras. In Section \ref{sec_bh} we recall the Brumfiel-Hilden algebras and prove Theorem \ref{thm_bhusintro}. In Section \ref{sec_connectsum} we show that the Brumfiel-Hilden condition is preserved by connect sum of knots and by certain coverings of knots. The proof of Theorem \ref{thm_mainintro} for torus knots is contained in Section \ref{sec_torus}, and the proof for certain pretzel knots is in Section \ref{sec_pretzel}. We confirm the Brumfiel-Hilden condition for 2-bridge knots in Section \ref{sec_twobridge}. Further remarks are contained in Section \ref{sec_furtherremarks}, and the Appendix contains an alternative description of the Brumfiel-Hilden algebra for 2-generator 1-relator groups.

\textbf{Acknowledgements:} We are thankful to P.~Boalch, F.~Bonahon, O.~Chalykh, C.~Dunkl,    D.~Muthiah, V.~Roubtsov, S.~Sahi, and P.~Terwilliger for helpful discussions regarding their work and/or the present paper. 
The first author (Y.~B.) would like to thank the scientific
committee of the international conference ``Lie algebras and Jordan algebras, their applications and representations (dedicated to Efim Zelmanov 60th birthday)'' for inviting him to give a plenary talk.
He is very grateful to the local organizers of the conference,
especially V. Futorny and I. Kashuba, for their warm hospitality and support in Brazil. The work of Peter Samuelson was funded in part by European Research Council grant no. 637618.

\section{Double affine Hecke algebras and character varieties of surfaces} \label{sec_daha_charvarieties}
In this section we describe the relationship between the $C^\vee C_1$ (spherical) double affine Hecke algebra $\S\H_{q,\ult}$ and the Kauffman bracket skein algebra $\sk_q(S^2\setminus\{p_1,p_2,p_3,p_4\})$ of the 4-punctured sphere. 
This implies a relationship between the $q=1$ specialization of the (spherical) DAHA and the relative $\SL_2(\C)$ character varieties of the 4-punctured sphere, which we describe explicitly. Finally, the polynomial representation of the DAHA gives an embedding of $\S\H_{q,\ult}$ into a localization of the skein algebra $ \sk_q(T^2)$, which we also describe explicitly. This gives explicit formulas for the rational map $\chr(T^2) \dashrightarrow \chr(S^2\setminus\{p_i\})$ which we provide in Corollary \ref{cor_symdunkl}. We conclude with explicit formulas describing the family \eqref{eq_familyintro} for the trefoil and figure eight knots.


\subsection{Character varieties of topological surfaces and affine cubic surfaces}
In this section we recall some results of Goldman in \cite[Sec.\ 6]{Gol97}. Let $\pi$ be the fundamental group of a 4-punctured sphere, with a presentation 
\[
\pi = \langle A,B,C,D \mid ABCD = \mathrm{Id}\rangle
\]
where each generator corresponds to a loop around a puncture. Consider the following seven functions on the $\SL_2$ character variety of $\pi$:
\begin{equation*}
\begin{array}{c}
a = \tr(A),\quad b = \tr(B),\quad c = \tr(C),\quad d = \tr(D) \\
x_S = \tr(AB),\quad y_S = \tr(BC),\quad z_S = \tr(CA)\\
\Omega_C := x_S^2+y_S^2+z_S^2+x_Sy_Sz_S - (ab+cd)x_S - (ad+bc)y_S - (ac+bd)z_S
\end{array}
\end{equation*}
These functions satisfy the defining equation
\begin{equation}
\Omega_C =  - (a^2+b^2+c^2+d^2+abcd) + 4\label{eq_goldman}
\end{equation}

\begin{theorem}[\cite{Gol97}]\label{thm_goldmanrep}
The relation \eqref{eq_goldman} describes an embedding
\[
\chr (S^2\setminus \{p_1,p_2,p_3,p_4\}) \hookrightarrow \C^7 
\]
\end{theorem}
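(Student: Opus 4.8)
The plan is to show that the seven functions $a,b,c,d,x_S,y_S,z_S$ generate the coordinate ring $\O(\chr(S^2 \setminus \{p_1,p_2,p_3,p_4\}))$, which amounts to showing that they separate points and that the only relation among them is \eqref{eq_goldman}. First I would invoke the Fricke–Vogt type result for the free group $F_3 = \langle A,B,C\rangle$ (here $D = (ABC)^{-1}$): the ring of invariants $\O(\chr(F_3))$ is freely generated as a polynomial ring by the seven traces $\tr(A), \tr(B), \tr(C), \tr(AB), \tr(BC), \tr(CA), \tr(ABC)$. So the character variety of the free group on three generators is $\C^7$ with these coordinates, and any $\SL_2(\C)$-representation of $F_3$ is determined up to conjugacy (at least on the closed-orbit/semisimple locus) by these seven numbers.

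Next I would translate the puncture-loop relation $ABCD = \id$ into a polynomial relation. Since $D = C^{-1}B^{-1}A^{-1}$, we have $d = \tr(D) = \tr(C^{-1}B^{-1}A^{-1}) = \tr(ABC)$ by the trace identity $\tr(g^{-1}) = \tr(g)$ in $\SL_2$ together with $\tr(xyz) = \tr(zyx)^{\vee}$... more precisely $\tr((ABC)^{-1}) = \tr(ABC)$, so $d = \tr(ABC)$. Then I would expand $\tr(ABC)$ using the standard $\SL_2$ trace relations (repeated application of $\tr(XY) + \tr(XY^{-1}) = \tr(X)\tr(Y)$) to express $\tr(ABC)$ as a polynomial in $a,b,c$ and $x_S = \tr(AB), y_S = \tr(BC), z_S = \tr(CA)$. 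Carrying this out, $\tr(ABC)$ satisfies a quadratic whose "other root" is $\tr(ACB)$, and eliminating the seventh free coordinate $\tr(ABC)$ in favor of $d$ produces exactly one polynomial constraint; a direct (if tedious) computation identifies this constraint with $\Omega_C = -(a^2+b^2+c^2+d^2+abcd)+4$. Concretely, $x_S y_S z_S$ and the linear-in-$x_S,y_S,z_S$ terms in $\Omega_C$ are precisely what appears when one symmetrizes the two values $\tr(ABC), \tr(ACB)$, whose sum and product are polynomials in $a,b,c,x_S,y_S,z_S$.

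The map to $\C^7$ is then the composite $\chr(\pi) \hookrightarrow \chr(F_3) = \C^7$ (induced by the surjection $F_3 \onto \pi$, which is injective on characters since $\pi$ is a quotient and restriction of invariants is a closed embedding of GIT quotients when the representation varieties sit inside each other as a closed subvariety), landing in the hypersurface cut out by \eqref{eq_goldman}. One should check the coordinates $x_S, y_S, z_S$ used here match Goldman's under $d = \tr(ABC)$ versus the possible sign/choice $d = \tr((ABC)^{-1})$ — these agree in $\SL_2$ — and that the relation as written (with the $x_S y_S z_S$ term and the mixed products $ab+cd$, etc.) is symmetric under the appropriate permutations of punctures, which is a useful consistency check.

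The main obstacle I anticipate is not the abstract structure but the bookkeeping: correctly deriving the precise polynomial form of $\tr(ABC)$ in terms of the six "lower" traces and verifying that eliminating it yields \emph{exactly} $\Omega_C = -(a^2+b^2+c^2+d^2+abcd)+4$ with the stated coefficients $ab+cd$, $ad+bc$, $ac+bd$ on the linear terms. This is a finite computation using only $\tr(XY)+\tr(XY^{-1}) = \tr(X)\tr(Y)$ and $\tr(\id) = 2$, but the sign conventions and the matching of which product of punctures pairs with which of $x_S, y_S, z_S$ require care. One clean way to organize it: set $w = \tr(ABC)$, compute $w + \tr(ACB) = $ (polynomial $P$ in $a,b,c,x_S,y_S,z_S$) and $w \cdot \tr(ACB) = $ (polynomial $Q$), so $w^2 - Pw + Q = 0$; then substitute $w = d$ and rearrange, recognizing the result as \eqref{eq_goldman}. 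Injectivity of the map (i.e. that it is a genuine embedding, not just a morphism) follows from the Fricke–Vogt freeness together with the fact that passing to the quotient group only imposes the closed condition $ABCD=\id$, so $\O(\chr(\pi))$ is the quotient of the polynomial ring $\C[a,b,c,d,x_S,y_S,z_S]$ by the ideal generated by \eqref{eq_goldman}, which is exactly the assertion of the theorem.
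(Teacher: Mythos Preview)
The paper does not prove this theorem; it is simply cited from Goldman \cite{Gol97}, so there is no ``paper's own proof'' to compare against. Your outline follows the classical Fricke--Vogt route, which is indeed the standard argument. However, there is a genuine conceptual error in how you have set it up.

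You assert that $\O(\chr(F_3))$ is \emph{freely} generated as a polynomial ring by the seven traces, so that $\chr(F_3)\cong\C^7$, and that the relation \eqref{eq_goldman} then arises by imposing the constraint $ABCD=\id$. Both halves of this are wrong. First, the relation $ABCD=\id$ imposes nothing new at the group level: it simply defines $D=(ABC)^{-1}$, so $\pi_1(S^2\setminus\{p_1,\dots,p_4\})$ is already the free group $F_3=\langle A,B,C\rangle$, and $d=\tr(D)=\tr(ABC)$ is a relabeling, not an elimination. Second, and consequently, $\O(\chr(F_3))$ is \emph{not} a free polynomial ring: a dimension count gives $\dim\chr(F_3)=\dim(\SL_2^3)-\dim\SL_2=6$, so seven generators must satisfy at least one relation. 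That single relation among $a,b,c,x_S,y_S,z_S,d=\tr(ABC)$ \emph{is} precisely \eqref{eq_goldman}; it is intrinsic to $\chr(F_3)$, not something imposed afterward.

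Your computational strategy is the right one once the logic is straightened out: the seven traces generate the invariant ring (this is the nontrivial Fricke--Vogt input), and the unique relation is obtained exactly as you describe, by writing $\tr(ABC)+\tr(ACB)$ and $\tr(ABC)\cdot\tr(ACB)$ as polynomials in the six lower traces, so that $d=\tr(ABC)$ satisfies a monic quadratic over $\C[a,b,c,x_S,y_S,z_S]$, which rearranges to \eqref{eq_goldman}. So the fix is purely organizational: drop the claim of freeness, recognize that $\chr(S^2\setminus\{p_i\})=\chr(F_3)$ on the nose, and present \eqref{eq_goldman} as the defining relation of $\chr(F_3)\hookrightarrow\C^7$ rather than as a consequence of a group relation.
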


\begin{remark}
There is a map $\C^7 \to \C^4$ given by the coordinates $a$, $b$, $c$, and $d$, and the \emph{relative character variety} is a fiber of this map. These fibers can be viewed as cubic surfaces in $\C^3$, and they first appeared in the work of Vogt \cite{Vog89} and Fricke and Klein \cite{FK65} on invariant theory in the late 19th century (see also \cite{Mag80}). In recent years they have found many interesting applications: for example, as monodromy surfaces of the classical Painleve VI equation (see, e.g. \cite{Iwa03} and \cite{IIS06}).
\end{remark}

\subsection{The Kauffman bracket skein algebra}
Here we give some very brief background about the Kauffman bracket skein module $\sk_q(M)$ of a 3-manifold, and refer to other works for more details (e.g. \cite{BS16} and references therein). Given an oriented manifold $M$, the skein module $\sk_q(M)$ is the vector space formally spanned by framed links in $M$ modulo the Kauffman bracket skein relations. 

\begin{figure}
\begin{center}
\input{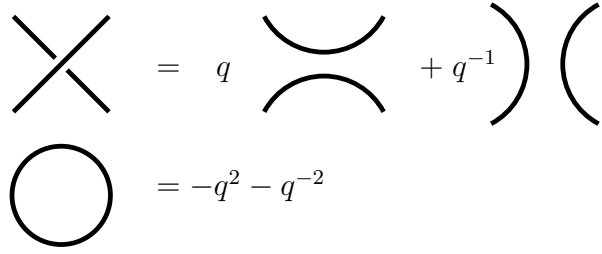}
\caption{Kauffman bracket skein relations}\label{fig_kbsm}
\end{center}
\end{figure}

If $M = F \times [0,1]$ is a thickened surface, then $\sk_q(F \times [0,1])$ is an algebra, where the multiplication is given by stacking in the $[0,1]$ direction. Also, for any 3-manifold $M$, if $q=\pm 1$, then $\sk_{q=\pm 1}(M)$ is a commutative algebra, where the product is given by disjoint union (this product is only defined at the specializations $q=\pm 1$.). This commutative algebra is related to character varieties via the following theorem.

\begin{theorem}[\cite{PS00}, \cite{Bul97}]\label{thm_orep}
 The map $\gamma \mapsto -\Tr_\gamma$ extends to an isomorphism of commutative algebras
 \[
\sk_{q=-1}(M) \stackrel \sim \longrightarrow \O\chr(M)  
 \]
where $\gamma$ is a loop and $\Tr_\gamma(\rho) := \Tr(\rho(\gamma))$.
\end{theorem}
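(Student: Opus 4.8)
The plan is to construct the homomorphism $\Phi\colon \sk_{-1}(M)\to\O\chr(M)$ by hand, check that it is a well-defined algebra map, deduce surjectivity from classical $\SL_2$-invariant theory, and obtain injectivity from the reducedness of the $q=-1$ skein algebra. Concretely, I would first define $\Phi$ on the free $\C$-module spanned by framed links by sending $L=K_1\sqcup\cdots\sqcup K_n$ to $\prod_{i=1}^{n}(-\Tr_{K_i})$, where $\Tr_{K_i}\in\O\chr(M)$ is the trace function of the free homotopy class of $K_i$ (at $q=-1$ a curl contributes the factor $-q^{\pm3}=1$, so framings are irrelevant). The substantive point is that $\Phi$ annihilates the two Kauffman bracket relations: the unknot relation $\bigcirc=-q^{2}-q^{-2}$ becomes $\bigcirc=-2$ at $q=-1$, matching $-\Tr(\mathrm{Id})=-2$; and if one resolves a single crossing of a diagram and reads off the two local arcs as elements $u,v$ of $\pi=\pi_1(M)$, then a short case check (self-crossing versus crossing of two components) shows that applying $\Phi$ turns the relation $[\,\text{crossing}\,]=q[\,\text{$0$-smoothing}\,]+q^{-1}[\,\text{$\infty$-smoothing}\,]$ at $q=-1$ into the $\SL_2$ Cayley--Hamilton identity $\Tr(uv)+\Tr(uv^{-1})=\Tr(u)\Tr(v)$; the sign $-\Tr$ is exactly what makes the coefficients $q=q^{-1}=-1$ consistent with the loop value $-2$. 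Hence $\Phi$ descends to $\sk_{-1}(M)$, and since the product there is disjoint union and $\Phi$ carries disjoint union to the pointwise product of trace functions, $\Phi$ is a $\C$-algebra homomorphism extending $\gamma\mapsto-\Tr_\gamma$ (uniqueness being automatic, as the knot classes generate $\sk_{-1}(M)$ as an algebra).

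For surjectivity I would invoke the classical theorem of Fricke--Vogt (classical $\SL_2$-invariant theory; see also the references in the Remark following Theorem~\ref{thm_goldmanrep}) that $\O\chr(\pi)$ is generated as a $\C$-algebra by the trace functions $\Tr_\gamma$, $\gamma\in\pi$. Every $\gamma$ is represented by an embedded knot $K_\gamma\subset M$ (a generic loop in a $3$-manifold is embedded), so $\Tr_\gamma=-\Phi(K_\gamma)$; and any finite collection of loops can be perturbed to be disjointly embedded, so arbitrary products of trace functions also lie in the image. Thus $\Phi$ is onto.

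The remaining, and genuinely hard, point is injectivity, which is the content added by \cite{PS00}. Bullock's theorem \cite{Bul97} identifies $\ker\Phi$ with the nilradical of $\sk_{-1}(M)$, so it suffices to prove that $\sk_{-1}(M)$ has no nonzero nilpotents. Following Przytycki--Sikora, I would reduce via a handle decomposition (or the universal skein algebra of $\pi$) to an explicit presentation of $\sk_{-1}(M)$ by loop generators and trace relations, and then show directly, by a leading-term/filtration argument on monomials in the generators, that the defining ideal is radical. Granting this reducedness statement, the surjective algebra map $\Phi$ has trivial kernel and is therefore an isomorphism. The main obstacle is precisely this last step: the verifications of well-definedness, multiplicativity, and surjectivity are formal or classical, whereas showing $\sk_{-1}(M)$ is reduced is the essential new input.
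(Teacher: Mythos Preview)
The paper does not give its own proof of this statement: Theorem~\ref{thm_orep} is quoted as background and attributed to \cite{Bul97} and \cite{PS00}, with no argument supplied. So there is nothing in the paper to compare your proposal against.

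That said, your outline is a faithful reconstruction of how the cited papers establish the result. The construction of $\Phi$, the verification that the Kauffman relations at $q=-1$ become the $\SL_2$ trace identity $\Tr(uv)+\Tr(uv^{-1})=\Tr(u)\Tr(v)$, and surjectivity via classical $\SL_2$-invariant theory are exactly Bullock's contribution in \cite{Bul97}, where he also identifies $\ker\Phi$ with the nilradical. The reducedness of $\sk_{-1}(M)$ is then the input from \cite{PS00}. Your sketch of the Przytycki--Sikora step (``reduce via a handle decomposition \ldots\ leading-term/filtration argument'') is a bit vague compared to what they actually do---they work with an explicit universal presentation and compare it directly to the coordinate ring---but you have correctly isolated where the real content lies.
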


We also use a presentation of the skein algebra $\sk_q(S^2\setminus \{p_i\})$ of the 4-punctured sphere given by Bullock and Przytycki. Let $x_1$ and $x_2$ be two distinct simple closed curves in $S^2 \setminus \{p_i\}$ which are non-boundary parallel and which intersect twice.(See Figure \ref{fig_curves}.) Define the curve $x_3$ via the equation
\[
 x_1x_2 = q^2 x_3 + q^{-2} z + \textrm{boundary curves}
\]
where $x_3$ and $z$ are simple closed curves each of which intersect $x_1$ and $x_2$ in two points. Suppose $x_1$ separates boundary curves $a_1$ and $a_2$ from $a_3$ and $a_4$, and define $p_1 = a_1a_2 + a_3a_4$. Define $p_2$ and $p_3$ similarly. Finally, define 
\[
 \Omega_K := -q^2 x_1x_2x_3 + q^4 x_1^2+q^{-4}x_2^2 + q^4x_3^2 + q^2 p_1 x_1 + q^{-2}p_2 x_2 + q^2 p_3 x_3
\]

\begin{figure}
\begin{center}
\input{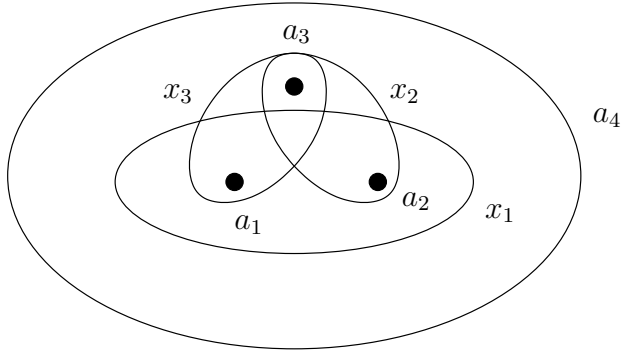}
\caption{Curves on the $4$-punctured sphere}
\label{fig_curves}
\end{center}
\end{figure}

\begin{theorem}[{\cite[Thm.\ 3]{BP00}}]\label{thm_ksphere}
With notation as in the previous paragraph, $\sk_q(S^2\setminus \{p_i\})$ has a presentation where the generators are $x_i$ and $a_i$ and the relations are
\begin{eqnarray*}
 [x_i,x_{i+1}]_{q^2} &=& (q^4-q^{-4})x_{i+2} - (q^2-q^{-2})p_{i+2}\\
 \Omega_K &=&  (q^2+q^{-2})^2 -( a_1a_2a_3a_4 + a_1^2+a_2^2+a_3^2+a_4^2)
\end{eqnarray*}
(The indices in the first relation are interpreted modulo 3.)
\end{theorem}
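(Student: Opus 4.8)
The plan is to identify $\sk_q(S^2\setminus\{p_i\})$ with a known free module and to compare it against the abstract algebra $\mathcal{A}$ presented by the stated generators and relations. First I would invoke the standard fact (Przytycki) that the Kauffman bracket skein module of a surface is a free $\C[q^{\pm 1}]$-module on the isotopy classes of multicurves — disjoint unions of non-contractible, pairwise non-parallel simple closed curves. On the four-punctured sphere every essential simple closed curve is either boundary-parallel (one of $a_1,\dots,a_4$) or has a well-defined ``slope'' in $\Q\cup\{\infty\}$, a curve meeting its Farey neighbors in two points; since any two non-isotopic, non-boundary-parallel curves intersect, a multicurve is just $n\geq 0$ parallel copies of a single slope-curve together with some boundary-parallel components. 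This makes the free basis of $\sk_q$ completely explicit, and in particular exhibits $a_1,\dots,a_4$ together with any three curves of three Farey-adjacent slopes — which we take to be $x_1,x_2,x_3$ — as algebra generators.

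Next I would verify the relations. The $a_i$ are boundary-parallel, hence central, and span a polynomial subalgebra $\C[q^{\pm 1}][a_1,\dots,a_4]$. For the $q^2$-commutation relations one runs the product-to-sum computation: resolving the two crossings of the stacked product $x_ix_{i+1}$ yields $q^2x_{i+2}+q^{-2}z+(\text{boundary terms})$ for one auxiliary curve $z$, while $x_{i+1}x_i$ yields $q^{-2}x_{i+2}+q^2z+(\text{boundary terms})$; forming $[x_i,x_{i+1}]_{q^2}=q^2x_ix_{i+1}-q^{-2}x_{i+1}x_i$ cancels the $z$-terms and leaves exactly $(q^4-q^{-4})x_{i+2}-(q^2-q^{-2})p_{i+2}$, with $p_{i+2}=a_ja_k+a_\ell a_m$ collecting the boundary contributions. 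The element $\Omega_K$ is, up to these boundary corrections, the usual Kauffman-bracket ``Casimir'', and a second product-to-sum computation of $x_1x_2x_3$, symmetrized over the two stacking orders, shows it equals the displayed scalar. Hence there is a surjective algebra homomorphism $\Phi\colon\mathcal{A}\twoheadrightarrow\sk_q(S^2\setminus\{p_i\})$.

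Finally I would prove $\Phi$ injective by a normal-form count, and this is the main obstacle. Using the three $q^2$-commutation relations as rewriting rules, one reduces an arbitrary word in $x_1,x_2,x_3$ over $\C[q^{\pm 1}][a_1,\dots,a_4]$ to a combination of an explicit ordered family of monomials; the cubic relation $\Omega_K=(\dots)$ is precisely what removes the remaining monomials in which $x_1,x_2,x_3$ all occur together, leaving a spanning set indexed by (essentially) the slope lattice. One then checks that $\Phi$ carries this spanning set bijectively onto the multicurve basis of the first step — the slope-$r$ curves being exactly the iterated product-to-sum images of the ordered monomials — so $\Phi$ sends a spanning set to a basis and is an isomorphism. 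Making ``the relations suffice'' rigorous requires a genuine Bergman diamond-lemma (or noncommutative Gröbner basis) argument showing that every overlap ambiguity among the three $q^2$-commutation relations and $\Omega_K$ resolves — this is what forces the precise coefficients $q^4-q^{-4}$, $q^2-q^{-2}$ and the exact shape of $\Omega_K$ — together with the combinatorial bookkeeping matching reduced monomials in $\mathcal{A}$ with the Farey/slope structure of simple closed curves on $S^2\setminus\{p_i\}$.
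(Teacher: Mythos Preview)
The paper does not prove this statement; it is quoted verbatim from \cite[Thm.~3]{BP00} and used as input. So there is no ``paper's own proof'' to compare against --- the authors simply cite Bullock--Przytycki and move on.

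That said, your outline is essentially the strategy of the original Bullock--Przytycki argument: free basis of multicurves, verification of the relations by resolving crossings, and a normal-form/spanning-set count in the abstract algebra $\mathcal{A}$ to get injectivity. You have correctly identified the genuine work as the confluence/diamond-lemma step, and you are right that this is where the precise coefficients are forced. One point to be careful about: your description of the multicurve basis (``$n\geq 0$ parallel copies of a single slope-curve together with some boundary-parallel components'') is not quite the full story --- parallel copies of a boundary curve are also allowed, and the matching of reduced monomials in $\mathcal{A}$ to this basis is more delicate than ``slope-$r$ curves are iterated product-to-sum images of ordered monomials'' suggests (one needs Chebyshev-type combinations, not raw powers, to hit single curves). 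This is bookkeeping rather than a conceptual gap, but it is exactly where a sketch like yours would need to be fleshed out to become a proof.
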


\begin{remark}
 It is clear from the formulas above that Theorems \ref{thm_orep} and \ref{thm_ksphere} are compatible with Theorem \ref{thm_goldmanrep}, where $(x,y,z)$ correspond to $(-x_1,-x_2,-x_3)$ and $(a,b,c,d)$ correspond to $(-a_1,-a_2,-a_3,-a_4)$.
\end{remark}

\subsection{The $C^\vee C_1$ double affine Hecke algebra}
In this section we recall the 5-parameter family of algebras $\H_{q,\ult}$ which was introduced by Sahi in \cite{Sah99} (see also \cite{NS04}).
This is the universal deformation of the algebra $\C[X^{\pm 1}, Y^{\pm 1}] \rtimes \Z_2$ (see \cite{Obl04}), and it depends on the parameters $q \in \C^*$ and $\ult \in (\C^*)^4$. The algebra $\H_{q,\ult}$ can be abstractly presented as follows: it is generated by the elements $T_1$, $T_2$, $T_3$, and $T_4$ subject to the relations
\begin{align}\label{ccdaharelations}
 (T_i-t_i)(T_i+t_i^{-1}) &= 0,\quad \quad 1 \leq i \leq 4\\
 T_4 T_3T_2 T_1 &= q\notag
\end{align}
\begin{remark}
Comparing our notation to \cite{BS16}, their $(T_0,T_0^\vee,T_1,T_1^{\vee})$ are our $(T_2,T_1,T_3,T_4)$, and their $(t_1,t_2,t_3,t_4)$ are our $(t_2,t_1,t_3,t_4)$.
\end{remark}

The element $\e := (T_3+t_3^{-1})/(t_3+t_3^{-1})$ is an idempotent in $\H_{q,\ult}$, and the algebra $\S\H_{q,\ult} := \e \H_{q,\ult}\e$ is called the \emph{spherical subalgebra}. 
A presentation for the spherical subalgebra $\S\H_{q,t}$ has been given in \cite{Ter13}, and this can be viewed as a $q$-deformation of the presentation given by Oblomkov in \cite{Obl04}. (A less symmetric presentation was given in \cite{Koo08}. See also \cite{Ter11} and \cite{IT10}.) We now recall this presentation in our notation. Define 
\begin{eqnarray*}
x &=& (T_4T_3 + (T_4T_3)^{-1})\e\\ 
y &=& (T_3T_2+(T_3T_2)^{-1})\e\\
z &=& (T_3T_1 + (T_3T_1)^{-1})\e\\
\Omega_D &=& -qxyz + q^2x^2 + q^{-2} y^2 + q^{2}z^2 - q\alpha x - q^{-1} \beta y - q \gamma z
\end{eqnarray*}
where 
\[
\alpha := \bar t_1 \bar t_2 + (\overline{q t_3})\bar t_4,
\quad \beta := \bar t_1 \bar t_4 + (\overline{q t_3})\bar t_2,
\quad \gamma := \bar t_2 \bar t_4 + (\overline{q t_3})\bar t_1 
 \]
Here and later we use the notation 
\[ 
\bar t_i := t_i - t_i^{-1},\quad \quad \overline{qt_3} := qt_3-q^{-1}t_3^{-1}
\] 
The following theorem is a slight modification of a result of Terwilliger -- for a proof of the modified statement, along with explanations regarding notational conventions, see \cite[Thm.~2.20]{BS16}. 
\begin{theorem}[{\cite[Prop 16.4]{Ter13}}]\label{thm_terwilliger}
The spherical subalgebra $\S\H_{q,\ult}$ is generated by $x,y,z$ with relations
\begin{eqnarray*}
 [x,y]_q &=& (q^2-q^{-2})z - (q-q^{-1})\gamma \\
{ } [y,z]_q &=& (q^2-q^{-2})x - (q-q^{-1})\alpha \\
{ } [z,x]_q &=& (q^2-q^{-2})y - (q-q^{-1})\beta \\
 \Omega_D &=& (\bar t_1)^2 + (\bar t_2)^2 + (\overline{q t_3})^2 + (\bar t_4)^2 - \bar t_1 \bar t_2 (\overline{q t_3}) \bar t_4 + (q+q^{-1})^2
\end{eqnarray*}
\end{theorem}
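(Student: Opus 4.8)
The plan is to derive Terwilliger's presentation of $\S\H_{q,\ult}$ from the abstract presentation \eqref{ccdaharelations} of $\H_{q,\ult}$ together with the explicit definitions of $x,y,z$ and $\e$. Since the excerpt explicitly states that this theorem is a ``slight modification of a result of Terwilliger'' and directs the reader to \cite[Thm.~2.20]{BS16} for the proof, I would not attempt a from-scratch argument; rather, I would explain how the statement in the present notation is obtained from the cited sources, and sketch the computational skeleton. First I would record the relations satisfied by the pairwise products $T_4T_3$, $T_3T_2$, $T_3T_1$ inside $\H_{q,\ult}$: each $T_i$ satisfies a quadratic $(T_i - t_i)(T_i + t_i^{-1}) = 0$, so $T_i + T_i^{-1} = t_i - t_i^{-1} = \bar t_i$ is central-like scalar behavior on the relevant bimodule, and $T_4T_3T_2T_1 = q$. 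These relations let one express each of $x,y,z$ (after multiplying by the idempotent $\e$) as a ``Casimir-type'' element, and the $q$-commutators $[x,y]_q$, $[y,z]_q$, $[z,x]_q$ follow from the braid-like manipulation of $T_i$'s, exactly as in the $A_1$ or $C^\vee C_1$ Askey--Wilson computations.

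The key steps, in order, are: (i) verify that $\e$ is an idempotent and that $x,y,z \in \S\H_{q,\ult} = \e\H_{q,\ult}\e$ by a direct check using $(T_3 - t_3)(T_3 + t_3^{-1}) = 0$; (ii) compute the three $q$-commutator relations by reducing words in $T_1,T_2,T_3,T_4$ using the quadratic relations and $T_4T_3T_2T_1 = q$ — this is the heart of the Askey--Wilson algebra computation and is where the structure constants $(q^2 - q^{-2})$, $(q - q^{-1})\alpha$, etc., emerge, with $\alpha,\beta,\gamma$ being the specific combinations $\bar t_1 \bar t_2 + (\overline{qt_3})\bar t_4$ and its cyclic variants; (iii) establish the Casimir relation $\Omega_D = (\bar t_1)^2 + (\bar t_2)^2 + (\overline{qt_3})^2 + (\bar t_4)^2 - \bar t_1\bar t_2(\overline{qt_3})\bar t_4 + (q + q^{-1})^2$ by expanding $-qxyz + q^2x^2 + q^{-2}y^2 + q^2z^2 - q\alpha x - q^{-1}\beta y - q\gamma z$ as an element of $\e\H_{q,\ult}\e$ and showing it equals the scalar (times $\e$); and (iv) argue that these relations are a complete presentation, i.e.\ that the natural surjection from the abstract algebra on generators $x,y,z$ with these relations onto $\S\H_{q,\ult}$ is injective — this is the content of Terwilliger's \cite[Prop 16.4]{Ter13} and one would cite it, perhaps checking a PBW-type basis count in low degree as a sanity check.

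The main obstacle is step (ii)--(iii): the honest verification that the commutator structure constants and the Casimir scalar come out exactly as claimed requires a careful and somewhat lengthy manipulation of the quadratic relations, keeping track of the asymmetry between the roles of $t_3$ (which enters through $\overline{qt_3}$ because of the $T_4T_3T_2T_1 = q$ relation and the choice of $\e$ built from $T_3$) and the other $t_i$. In practice I would organize this by first passing to the ``bar'' variables $\bar t_i$ and $\overline{qt_3}$ so that the quadratics become $T_i + T_i^{-1} = \bar t_i$, then systematically rewriting $xyz$ via the defining products; the bookkeeping is routine but error-prone, which is precisely why the cleanest route is to invoke \cite[Prop 16.4]{Ter13} and \cite[Thm.~2.20]{BS16} and merely translate through the dictionary in the Remark comparing notations with \cite{BS16} (their $(T_0, T_0^\vee, T_1, T_1^\vee)$ versus our $(T_2, T_1, T_3, T_4)$ and their $(t_1,t_2,t_3,t_4)$ versus our $(t_2,t_1,t_3,t_4)$). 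The final completeness claim (step (iv)) is not a triviality either, but it is fully supplied by the cited references, so no independent argument is needed.
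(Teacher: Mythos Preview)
Your proposal is correct and matches the paper's approach exactly: the paper does not prove this theorem at all but simply cites \cite[Prop~16.4]{Ter13} for the result and \cite[Thm.~2.20]{BS16} for the translation into the present notation, which is precisely what you propose to do. Your additional sketch of the computational skeleton (verifying the idempotent, reducing $q$-commutators via the quadratic relations, expanding the Casimir) goes beyond what the paper itself provides, but is consistent with how the cited sources establish the result.
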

\begin{remark}
Here we have corrected a typo from \cite{BS16} in the powers of $q$ in the last term of the relation involving $\Omega_D$. We have also slightly rewritten $\Omega_D$ using the commutation relations above.
\end{remark}

\begin{remark}
 If $q=\pm 1$ then the spherical subalgebras are commutative for any $(t_1,t_2,t_3,t_4)$. The corresponding varieties are affine cubic surfaces studied in detail in \cite{Obl04} (and the presentation in \cite{Obl04} agrees exactly with the one above, where our $x,y,z$ are his $X_1,X_2,X_3$). 
\end{remark}

Using these explicit presentations, we now relate the skein module of the 4-punctured sphere with the spherical DAHA. We point out that we must replace $q$ by $q^2$ in the DAHA to define this map.
\begin{corollary}
 Let $i^2=-1$. There is an algebra map $\sk_q(S^2\setminus \{p_1,p_2,p_3,p_4\}) \to \S\H_{q^2,\ult}$ given by
\begin{equation*}
 \begin{array}{c}
  x_1 \mapsto x,\quad x_2 \mapsto y,\quad x_3 \mapsto z\\
  a_1 \mapsto i\bar t_1,\quad a_2 \mapsto i \bar t_2,\quad a_3 \mapsto i(\overline{qt_3}),\quad a_4 \mapsto i\bar t_4
 \end{array}
\end{equation*}
\end{corollary}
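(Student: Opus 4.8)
The plan is to exploit the explicit presentations available on both sides. The source $\sk_q(S^2\setminus\{p_1,p_2,p_3,p_4\})$ is presented in Theorem \ref{thm_ksphere} by generators $x_1,x_2,x_3,a_1,a_2,a_3,a_4$, while the target $\S\H_{q^2,\ult}$ is presented in Theorem \ref{thm_terwilliger} after substituting $q^2$ for $q$ throughout. To produce the homomorphism it therefore suffices to declare the images of the seven generators as in the statement and to verify that each defining relation of Theorem \ref{thm_ksphere} becomes an identity in $\S\H_{q^2,\ult}$. A preliminary sanity check: $\bar t_1,\bar t_2,\overline{q^2t_3},\bar t_4$ are scalars, hence central, which matches the fact that the boundary curves $a_i$ are central in the skein algebra.

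The only genuine computation is to determine the images of the composite quantities $p_i$ and $\Omega_K$. Since $a_j\mapsto i\bar t_j$ (with $a_3\mapsto i\,\overline{q^2t_3}$) and $i^2=-1$, every quadratic monomial $a_ja_k$ is carried to $-\bar t_j\bar t_k$; comparing with the definitions of $\alpha,\beta,\gamma$ then identifies the images of $p_1,p_2,p_3$ with $\pm\alpha,\pm\beta,\pm\gamma$ (in the cyclic order determined by which $x_i$ the corresponding relation refers to). Likewise the quartic $a_1a_2a_3a_4$ maps to $\overline{q^2t_3}\,\bar t_1\bar t_2\bar t_4$ because $i^4=1$, whereas $\sum_j a_j^2\mapsto-\sum_j\bar t_j^{\,2}$ (with $a_3^2\mapsto-\overline{q^2t_3}^{\,2}$). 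Feeding these into the two families of relations of Theorem \ref{thm_ksphere},
\[
[x_i,x_{i+1}]_{q^2}=(q^4-q^{-4})x_{i+2}-(q^2-q^{-2})p_{i+2}
\qquad\text{and}\qquad
\Omega_K=(q^2+q^{-2})^2-\bigl(a_1a_2a_3a_4+\sum_j a_j^2\bigr),
\]
turns them, term by term, into exactly the commutation relations and the cubic relation of Theorem \ref{thm_terwilliger} with $q$ replaced by $q^2$, which hold by that theorem. Hence all relations are respected and the assignment extends to the desired algebra homomorphism. Since $x,y,z$ already generate $\S\H_{q^2,\ult}$, this map is moreover surjective, and aligning the two presentations shows it is in fact an isomorphism, recovering the statement quoted in the introduction.

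The step I expect to require the most care is the bookkeeping: fixing the cyclic matchings $x_1,x_2,x_3\leftrightarrow x,y,z$ and $p_1,p_2,p_3\leftrightarrow\alpha,\beta,\gamma$, pinning down precisely which signs the factors of $i$ introduce and checking that these are consistent between the three $q^2$-commutators and the cubic, and keeping straight the substitution $q\rightsquigarrow q^2$ (so that, e.g., $\overline{qt_3}$ and the constants $\alpha,\beta,\gamma$ of Theorem \ref{thm_terwilliger} are read in the $q^2$-world). Beyond this there is no conceptual obstacle: once the presentations are placed side by side the verification is mechanical.
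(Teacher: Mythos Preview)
Your approach is exactly the one the paper has in mind: the corollary is stated without proof immediately after the two presentation theorems, with the sentence ``Using these explicit presentations, we now relate\ldots'' indicating that one simply matches generators and checks relations. Your identification of the sign/index bookkeeping (in particular tracking $p_i\mapsto -\alpha,-\beta,-\gamma$ and the substitution $q\rightsquigarrow q^2$ in $\overline{qt_3}$) as the only delicate point is accurate, and your observation that the map is in fact an isomorphism agrees with what the paper states in the introduction.
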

\begin{remark}
 The appearance of $\sqrt{-1}$ here has a heuristic explanation as follows. The standard relation for Hecke algebras (with braid generator $T$ and parameter $t$) is given by $(T-t)(T+t^{-1}) = 0$. This can be rewritten as $T-T^{-1} = t-t^{-1}$. Given a matrix $A \in \SL_2(\C)$ with eigenvalues $a$ and $a^{-1}$, the following matrix equation is satisfied: $A + A^{-1} = (a+a^{-1})\mathrm{Id}$. Then the matrix equation can be obtained from the Hecke relation by rescaling $T$ and $t$ by $\sqrt{-1}$.
\end{remark}

We now specialize this corollary to obtain a map $\ochar(S^2\setminus \{p_i\}) \to \S\H_{q=1,\ult}$. We remark that here we specialize $q=1$ because the $q$ in the DAHA is replaced by $q^2$ when it is compared to the skein algebra of the 4-punctured sphere.
\begin{corollary}
 There is a map of commutative algebras $\ochar(S^2\setminus \{p_1,p_2,p_3,p_4\}) \to \S\H_{q=1,\ult}$ 
 \[
  x_S \mapsto -x,\quad y_S \mapsto -y,\quad z_S \mapsto -z,\quad a_i \mapsto \sqrt{-1} \, \bar t_i
 \]
\end{corollary}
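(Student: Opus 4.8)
The plan is to deduce this corollary directly from the preceding one by specializing the parameter $q$. Recall the preceding corollary gives an algebra map $\sk_q(S^2\setminus\{p_i\}) \to \S\H_{q^2,\ult}$ sending $x_i \mapsto x,y,z$ and $a_i \mapsto i\bar t_i$ (with the understanding that $a_3 \mapsto i\overline{qt_3}$). First I would set $q=-1$ on the skein side. By Theorem \ref{thm_orep}, the map $\gamma \mapsto -\Tr_\gamma$ identifies $\sk_{q=-1}(S^2\setminus\{p_i\})$ with $\O\chr(S^2\setminus\{p_i\})$; under this identification and the compatibility noted in the remark after Theorem \ref{thm_ksphere}, the generators $x_1,x_2,x_3$ of $\sk_{q=-1}$ correspond to $-x_S,-y_S,-z_S$ and $a_i$ corresponds to $-a$ etc. in Goldman's coordinates, so the functions $x_S,y_S,z_S,a,b,c,d$ of the first subsection are $-x_1,-x_2,-x_3,-a_1,-a_2,-a_3,-a_4$ respectively.

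Next I would set $q^2=1$ on the DAHA side, i.e. $q=1$ there; as remarked in the excerpt (and in the remark following Theorem \ref{thm_terwilliger}), at $q=\pm 1$ the spherical subalgebra $\S\H_{q,\ult}$ is commutative, and moreover $\overline{qt_3} = qt_3 - q^{-1}t_3^{-1}$ becomes $t_3 - t_3^{-1} = \bar t_3$ when $q=1$. Composing: the algebra map $\sk_{q=-1}(S^2\setminus\{p_i\}) \to \S\H_{q=1,\ult}$ from the previous corollary sends $x_1,x_2,x_3 \mapsto x,y,z$ and $a_i \mapsto i\bar t_i$. Translating the source into Goldman's coordinates via $x_1 = -x_S$, $a_i = -a,\dots$ (and rescaling $i = \sqrt{-1}$, noting $-i\bar t_i = i\bar t_i \cdot(-1)$, but with the sign absorbed since $a_i = -a$): one gets $x_S = -x_1 \mapsto -x$, similarly $y_S \mapsto -y$, $z_S \mapsto -z$, and $a = -a_1 \mapsto -i\bar t_1 = \sqrt{-1}\,\bar t_1$ after accounting for the sign conventions — this last point requires a small consistency check that the signs on the $a$-parameters match, which I would verify by comparing the two cubic relations \eqref{eq_goldman} and the one in Theorem \ref{thm_terwilliger} at $q=1$.

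The only genuine content, and the step I expect to require the most care, is checking that the defining cubic relations are transported correctly under this substitution, since the claim that the assignment "extends to a map of commutative algebras" means exactly that Goldman's relation \eqref{eq_goldman} maps to the relation $\Omega_D = (\bar t_1)^2 + (\bar t_2)^2 + (\bar t_3)^2 + (\bar t_4)^2 - \bar t_1\bar t_2\bar t_3\bar t_4 + 4$ of Theorem \ref{thm_terwilliger} at $q=1$. Substituting $x_S \mapsto -x$, $a \mapsto \sqrt{-1}\,\bar t_1$, etc. into $\Omega_C$ and \eqref{eq_goldman}, one must see that $\Omega_C \mapsto \Omega_D$ and $-(a^2+b^2+c^2+d^2+abcd)+4 \mapsto (\bar t_1)^2+\cdots+(\bar t_4)^2 - \bar t_1\bar t_2\bar t_3\bar t_4 + 4$; here the four factors of $\sqrt{-1}$ in $abcd$ produce the sign $(\sqrt{-1})^4 = 1$ matching the $-\bar t_1\bar t_2\bar t_3\bar t_4$ term, while each $a^2$ contributes $(\sqrt{-1})^2(\bar t_1)^2 = -(\bar t_1)^2$, flipping the overall sign to $+(\bar t_1)^2$, and the bilinear cross terms $ab+cd$ etc. appearing in $\Omega_C$ likewise pick up $(\sqrt{-1})^2 = -1$. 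This is a routine but slightly fiddly verification; I would present it as "a direct comparison of the relations in Theorems \ref{thm_goldmanrep} and \ref{thm_terwilliger} at $q=1$, using the dictionary of the preceding corollary and Theorem \ref{thm_orep}," and leave the bookkeeping to the reader.
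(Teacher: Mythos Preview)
Your approach is correct and matches the paper's: the corollary is obtained by specializing the previous one at $q=-1$ on the skein side (hence $q=1$ on the DAHA side) and invoking Theorem~\ref{thm_orep} together with the sign dictionary in the remark after Theorem~\ref{thm_ksphere}. The paper in fact offers no separate proof beyond the sentence ``We now specialize this corollary\ldots,'' so your write-up is a more detailed version of the intended argument.

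One small clarification on the point you flagged: the identification $a = -a_1 \mapsto -i\bar t_1$ versus the stated $a_i \mapsto \sqrt{-1}\,\bar t_i$ is not actually a discrepancy. In both Goldman's relation \eqref{eq_goldman} and the Terwilliger relation at $q=1$, the boundary parameters $a,b,c,d$ (resp.\ $\bar t_i$) enter only through products of an even number of them ($ab+cd$, $a^2$, $abcd$, etc.), so the global sign $\pm i$ is immaterial. Your proposed check of the cubic relations confirms this: $(\pm i)^2 = -1$ and $(\pm i)^4 = 1$ give the same result either way. So you can drop the hedging and simply note that the sign choice on the $a_i$ is irrelevant because only even products occur.
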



\subsection{The unpunctured torus}
The following theorem was proved in \cite{BP00} (for a different but conceptually appealing description of the same algebra, see \cite{FG00}). Let $x_T$, $y_T$, and $z_T$ be the $(1,0)$, $(0,1)$, and $(1,1)$ curves on the torus $T^2$. Let $\Omega_T = -qx_Ty_Tz_T + q^2x_T^2 + q^{-2}y_T^2 + q^2z_T^2$.
\begin{theorem}[\cite{BP00}]
 The algebra $\sk_q(T^2)$ is generated by $x_T$, $y_T$, and $z_T$ subject to the following relations:
 \begin{eqnarray*}
  \,[x_T,y_T]_q &=& (q^2-q^{-2})z_T\\
  \,[z_T,x_T]_q &=& (q^2-q^{-2})y_T\\
  \,[y_T,z_T]_q &=& (q^2-q^{-2})x_T\\
  \,\Omega_T &=& 2(q^2+q^{-2})
 \end{eqnarray*}

\end{theorem}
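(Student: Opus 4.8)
The plan is to reduce the claimed presentation of $\sk_q(T^2)$ to the already-established presentation of $\S\H_{q^2,\ult}$ from Theorem \ref{thm_terwilliger} by specializing the parameters, together with a dimension-counting argument to rule out extra relations. First I would observe that the skein algebra of $T^2$ is the skein algebra of the once-punctured torus modulo the ideal killing the boundary curve, and more directly, that $T^2$ sits inside the 4-punctured sphere story via the hyperelliptic involution: the quotient of $T^2$ by the elliptic involution is $S^2$ with four branch points, and this identifies (a suitable completion/localization of) $\sk_q(T^2)$ with the algebra generated by the images of $x_T, y_T, z_T$. Under this identification the four boundary parameters $a_i$ of the 4-punctured sphere all specialize so that $\bar t_1 = \bar t_2 = \overline{qt_3} = \bar t_4 = 0$, i.e. $\ult = \ulone$ (up to the $q \mapsto q^2$ convention). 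Substituting $\bar t_i = 0$ into the relations of Theorem \ref{thm_terwilliger} kills $\alpha$, $\beta$, $\gamma$ and collapses $\Omega_D$ to $\Omega_T$ with right-hand side $(q+q^{-1})^2 = 2 + q^2 + q^{-2}$; after replacing $q$ by $q^2$ this is exactly the claimed list of relations (the central value $2(q^4+q^{-4})$ becomes $2(q^2+q^{-2})$ in the torus normalization once one tracks the $q \mapsto q^2$ substitution).

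The core of the argument is then that these relations are \emph{all} the relations. The natural approach is to build an explicit spanning set: using the commutation relations $[x_T,y_T]_q = (q^2-q^{-2})z_T$ and its cyclic variants, one shows that every monomial in $x_T, y_T, z_T$ can be rewritten as a $\C[q^{\pm1}]$-linear combination of the ordered monomials $x_T^i y_T^j$ and $x_T^i y_T^j z_T$ with $i,j \geq 0$ (the $\Omega_T$ relation being what lets one eliminate any $z_T^2$, reducing the $z_T$-degree to at most one). This gives a surjection from the abstractly presented algebra onto $\sk_q(T^2)$, and one checks it is an isomorphism by comparing with the known basis of $\sk_q(T^2)$ indexed by isotopy classes of multicurves on the torus (equivalently by pairs $(i,j)$ together with the $z_T$-parity) — this basis is classical and is exactly the Frohman–Gelca noncommutative-torus description cited in the statement. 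Since Theorem \ref{thm_terwilliger} already packages the hard Diamond-Lemma-type computation (the relations are consistent and reduce everything to a spanning set of the right size), specializing it is enough and no independent confluence check is needed.

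Alternatively — and this may be the cleanest route to write up — one can cite Frohman–Gelca \cite{FG00} directly: they identify $\sk_q(T^2)$ with the $\Z_2$-invariant part of a quantum torus, give the product-to-sum formula for the basis of multicurves, and from that the three $q$-commutator relations and the value of $\Omega_T$ follow by a short direct computation, while the fact that these generate everything follows because $x_T, y_T, z_T$ together with $\Omega_T$ suffice to express the product-to-sum rule. The main obstacle is bookkeeping rather than conceptual: one must be scrupulous about the two competing $q$-conventions (the skein variable versus the DAHA variable, differing by a square) and about framing/normalization factors, since a sign or a power of $q$ error in $\Omega_T$ or in the central constant $2(q^2+q^{-2})$ would break the comparison with Theorem \ref{thm_terwilliger}. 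Everything else is a routine specialization of results already recalled above.
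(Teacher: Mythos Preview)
The paper does not prove this theorem at all: it is quoted verbatim from \cite{BP00} as an established result, and the paper's own logic runs in the \emph{opposite} direction --- Corollary~\ref{cor_toruschar} uses this theorem together with Terwilliger's presentation to deduce the isomorphism $\S\H_{q,1,1,1,1} \cong \sk_q(T^2)$. So your primary strategy of specializing Theorem~\ref{thm_terwilliger} to recover the torus presentation is, within this paper, logically backwards: the identification of the specialized DAHA with the skein algebra is a \emph{consequence} of the statement you are trying to prove, not an input to it. Your second route, via the Frohman--Gelca product-to-sum formula, is sound and is in fact much closer in spirit to how \cite{BP00} argues.

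Your specialization computation also contains an error. At $\ult = \ulone$ one does have $\bar t_1 = \bar t_2 = \bar t_4 = 0$, but $\overline{qt_3} = q - q^{-1}$ is \emph{not} zero. The right-hand side of the $\Omega_D$ relation therefore becomes $(q-q^{-1})^2 + (q+q^{-1})^2 = 2(q^2+q^{-2})$, matching the statement directly with no $q \mapsto q^2$ substitution required; the $q \mapsto q^2$ shift in this paper appears only in the comparison between the DAHA and the skein algebra of the \emph{4-punctured sphere}, not the torus. Your final paragraph acknowledges that the bookkeeping is the delicate part, and indeed that is where your sketch goes astray.
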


We can combine this with the previous theorems to obtain the following.
\begin{corollary}\label{cor_toruschar}
 There is an algebra isomorphism\footnote{To be pedantic, if the base ring for $\S\H_{q,\ult}$ is $\C[q^{\pm 1},\ult^{\pm 1}]$, and if $\C$ is given a $\C[q^{\pm 1},\ult^{\pm 1}]$-module structure where the $t_i$ act by $1$, then $\S\H_{q,\ult}\otimes_{\C[q^{\pm 1},\ult^{\pm 1}]} \C \to \sk_q(T^2)$ is an isomorphism.} $\S\H_{q,1,1,1,1} \to \sk_q(T^2)$ given by 
 \[
  x \mapsto x_T,\quad y \mapsto y_T,\quad z \mapsto z_T,\quad t_j \mapsto 1
 \]
 There is a surjective algebra map $\sk_q(S^2\setminus \{p_i\}) \to \sk_{q^2}(T^2)$ given by 
 \[
  x_1 \mapsto x_T,\quad x_2 \mapsto y_T,\quad x_3 \mapsto z_T,\quad a_1,a_2,a_4\mapsto 0,\quad a_3 \mapsto (iq) + (iq)^{-1}
 \]
 where $i^2=-1$.
\end{corollary}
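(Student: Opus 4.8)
The plan is to deduce both assertions from the presentations already recorded in Theorem \ref{thm_terwilliger}, Theorem \ref{thm_ksphere}, and the torus theorem of \cite{BP00}, together with the corollaries relating them. For the first map, I would start from the isomorphism $\S\H_{q,\ult} \cong \eR\H_{q,\ult}\eR$ specialized at $\ult = \ulone$; by Theorem \ref{thm_terwilliger}, $\S\H_{q,\ulone}$ is generated by $x,y,z$ with the three $q$-commutator relations in which all the terms $\bar t_i$ and $\overline{qt_3}$ vanish (since $t_i = 1$ forces $\bar t_i = 0$ and $\overline{qt_3} = q - q^{-1}$), and with $\Omega_D = (q+q^{-1})^2$ after the cubic substitution. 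Comparing term by term with the Bullock--Przytycki relations for $\sk_q(T^2)$, the commutator relations match on the nose, and the cubic relation $\Omega_T = 2(q^2+q^{-2})$ must be checked to coincide with the specialization of the $\Omega_D$ relation. The only subtlety is bookkeeping in the powers of $q$: I would verify that $\Omega_D|_{\ult = \ulone} = -qxyz + q^2x^2 + q^{-2}y^2 + q^2z^2 = \Omega_T$ and that the right-hand side $(q+q^{-1})^2 = q^2 + 2 + q^{-2}$ agrees with $2(q^2+q^{-2})$ --- wait, these are not literally equal, so in fact the value of $\overline{qt_3}$ at $t_3 = 1$ contributes: $(\overline{qt_3})^2 = (q-q^{-1})^2 = q^2 - 2 + q^{-2}$, and adding this to $(q+q^{-1})^2$ gives exactly $2q^2 + 2q^{-2}$. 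This confirms the map is well-defined, and it is an isomorphism because it sends generators to generators and matches all relations; the footnoted base-change statement is then just the observation that the relations defining $\S\H_{q,\ult}$ over $\C[q^{\pm1},\ult^{\pm1}]$ become precisely the relations for $\sk_q(T^2)$ after the $t_i \mapsto 1$ base change.

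For the second map, the strategy is to compose the algebra map $\sk_q(S^2\setminus\{p_i\}) \to \S\H_{q^2,\ult}$ from the earlier corollary with a specialization of the parameters that lands in $\sk_{q^2}(T^2)$. Concretely, I would choose $t_1 = t_2 = t_4 = 1$ and $t_3$ so that the surviving boundary parameter matches: under $a_j \mapsto i\bar t_j$, setting $t_1 = t_2 = t_4 = 1$ kills $a_1, a_2, a_4$, while $a_3 \mapsto i(\overline{q t_3})$; choosing $t_3 = 1$ gives $a_3 \mapsto i(q - q^{-1})$, and after the $q \mapsto q^2$ replacement this reads $a_3 \mapsto i(q^2 - q^{-2})$. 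Hmm --- the claimed image is $(iq) + (iq)^{-1} = i(q - q^{-1})$, which already incorporates the $q \mapsto q^2$ convention differently, so I would instead match by noting that under the DAHA-to-torus isomorphism of Corollary \ref{cor_toruschar} the parameter $t_3 = 1$ in $\S\H_{q^2,\ulone}$ corresponds to the torus, and $a_3 = i\overline{q^2 t_3}|_{t_3=1}$; one then checks the skein relations of the $4$-punctured sphere (Theorem \ref{thm_ksphere}), under $x_i \mapsto$ the corresponding torus curve and the stated $a_i$, collapse to the torus relations, using that with three of the $a_i$ zero the $p_i$ defined by $p_1 = a_1a_2 + a_3a_4$ etc. all vanish, so $\Omega_K$ reduces to $-q^2 x_1x_2x_3 + q^4 x_1^2 + q^{-4}x_2^2 + q^4 x_3^2$ and the relation $\Omega_K = (q^2+q^{-2})^2 - (a_1a_2a_3a_4 + \sum a_i^2)$ becomes the torus cubic. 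Surjectivity is immediate since $x_1, x_2, x_3$ map onto the generators $x_T, y_T, z_T$ of $\sk_{q^2}(T^2)$.

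The key steps, in order, are: (1) specialize the Terwilliger presentation of $\S\H_{q,\ult}$ at $\ult = \ulone$ and simplify every coefficient; (2) match it relation-by-relation with the Bullock--Przytycki presentation of $\sk_q(T^2)$, being careful that the cubic relation's constant on both sides genuinely agrees once $(\overline{qt_3})^2$ is accounted for; (3) for the second map, compose the sphere-to-DAHA corollary with the $t_i \mapsto 1$ specialization and identify the resulting target with $\sk_{q^2}(T^2)$ via Corollary \ref{cor_toruschar}, tracking the value of $a_3$; (4) verify well-definedness by checking the sphere relations degenerate correctly, and note surjectivity from the generators. I expect the main obstacle to be purely a matter of careful coefficient bookkeeping --- the repeated $q \mapsto q^2$ reparametrization and the distinction between $\bar t_3$ and $\overline{qt_3}$ make it easy to be off by a power of $q$ --- rather than any conceptual difficulty; once the constants are pinned down, both statements follow formally from the presentations.
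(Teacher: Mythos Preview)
Your approach is correct and is exactly what the paper does: it gives no explicit proof beyond the sentence ``We can combine this with the previous theorems to obtain the following,'' and your plan to specialize the Terwilliger presentation at $\ult=\ulone$ and compare relation-by-relation with the Bullock--Przytycki presentations is precisely that combination. Your computation $(\overline{qt_3})^2|_{t_3=1}+(q+q^{-1})^2=(q-q^{-1})^2+(q+q^{-1})^2=2(q^2+q^{-2})$ is the one nontrivial check for the first isomorphism, and you carry it out correctly.

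Your hesitation about the value of $a_3$ in the second map is well-founded rather than a gap in your argument: if you push your own direct check of the $\Omega_K$ relation through, you find that with $a_1=a_2=a_4=0$ one needs $a_3^2 = (q^2+q^{-2})^2 - 2(q^4+q^{-4}) = -(q^2-q^{-2})^2$, i.e.\ $a_3 = \pm i(q^2-q^{-2}) = (iq^2)+(iq^2)^{-1}$, not $(iq)+(iq)^{-1}$ as printed. So the discrepancy you flagged is a typo in the stated formula, and your strategy of verifying the degenerated relations directly (rather than only composing with the earlier corollary) is the cleanest way to both detect and fix it.
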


Using the polynomial representation in the next section, we will extend the first map in the previous corollary to the parameters $\S\H_{q,t_1,t_2,1,1}$, at the expense of expanding the range by localizing at certain elements.

\subsection{The polynomial representation}
The DAHA $\H_{q,\ult}$ can be realized by operators on Laurent polynomials $\C[X^{\pm 1}]$ as follows. First, we define auxiliary operators on $\C[X^{\pm 1}]$:
\[
\xx \cdot f(X) = Xf(X),\quad \quad s\cdot f(X) = f(X^{-1}),\quad \quad \yy\cdot f(X) = f(q^{-2}X)
\]
We then define
\begin{eqnarray*}
 \hat T_2 &=& t_2 s\yy - \frac{q^2 \bar t_2 \xx^2  + q\bar t_1 \xx}{1-q^2\xx^2}(1-s\yy)\\
 \hat T_3 &=& t_3s + \frac{\bar t_3 +\bar t_4\xx}{1-\xx^2}(1-s)
\end{eqnarray*}
The operator $\hat T_2$ acts on Laurent polynomials because $(1-s\yy)\cdot X^n = X^n - q^{-2n}X^{-n}$ is divisible by $1-q^2X^2$ (and similarly for $\hat T_3$).
The following Dunkl-type embedding is defined using these operators (see \cite[Thm. 2.22]{NS04}):
\begin{proposition}[\cite{Sah99}]\label{prop_dunklembedding}
 The assignments 
 \begin{equation}\label{ccdunklembedding}
 T_1 \mapsto q\hat T_2^{-1}\xx,\quad T_2 \mapsto \hat T_2,\quad T_3\mapsto \hat T_3, \quad T_4 \mapsto \xx^{-1}\hat T_3^{-1}
\end{equation}
extend to an injective algebra homomorphism $\H_{q,\ult}\hookrightarrow \End_\C(\C[X^{\pm 1}])$. 
\end{proposition}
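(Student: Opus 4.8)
The plan is to verify directly that the assignments \eqref{ccdunklembedding} define an algebra homomorphism and then establish injectivity. For the first part, I would check that the images of $T_1,T_2,T_3,T_4$ under \eqref{ccdunklembedding} satisfy the defining relations \eqref{ccdaharelations}. The quadratic relations $(\hat T_i - t_i)(\hat T_i + t_i^{-1}) = 0$ for $i=2,3$ should be checked first, for instance by observing that $\hat T_i$ has the form $t_i$ (as identity on the $s$-invariants) plus a term that swaps, so its two eigenvalues on $\C[X^{\pm 1}]$ are $t_i$ and $-t_i^{-1}$; equivalently, one verifies $\hat T_i - \hat T_i^{-1} = \bar t_i\cdot\mathrm{Id}$ on $\C[X^{\pm 1}]$. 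Once $\hat T_2,\hat T_3$ are known to be invertible with these eigenvalues, the images of $T_1 = q\hat T_2^{-1}\xx$ and $T_4 = \xx^{-1}\hat T_3^{-1}$ also satisfy their quadratic relations: for $T_4$ one computes $(\xx^{-1}\hat T_3^{-1})^{\pm 1}$ and uses the relation for $\hat T_3$; for $T_1$ one uses that $\xx$ conjugates things appropriately. Finally, the product relation $T_4T_3T_2T_1 \mapsto (\xx^{-1}\hat T_3^{-1})(\hat T_3)(\hat T_2)(q\hat T_2^{-1}\xx) = \xx^{-1}\cdot q\cdot \xx = q\cdot\mathrm{Id}$ is the telescoping computation that makes the assignment \eqref{ccdunklembedding} consistent — this is the structural reason the map is defined. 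The operators do act on $\C[X^{\pm 1}]$ by the divisibility remark already recorded after the definitions of $\hat T_2,\hat T_3$.

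For injectivity, the standard approach for Dunkl-type embeddings is a PBW / filtration argument. One equips $\H_{q,\ult}$ with a presentation (or PBW basis) exhibiting it as a free module over $\C[X^{\pm 1}]$ with basis indexed by the group generated by the reflections and translations — concretely, the subalgebra generated by (the images of) $\xx$ and $\yy$ together with $s$ gives a crossed-product-type decomposition $\C[X^{\pm1},Y^{\pm1}]\rtimes\Z_2$ in the associated graded / classical limit, and $\H_{q,\ult}$ is its flat deformation (this is exactly the statement recalled earlier that $\H_{q,\ult}$ is the universal deformation of $\C[X^{\pm1},Y^{\pm1}]\rtimes\Z_2$). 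One then checks that the images of the corresponding monomials under the polynomial representation are linearly independent as operators on $\C[X^{\pm1}]$ — this is typically done by examining leading terms with respect to the $\yy$-degree (translation length) filtration, where $\hat T_2$ contributes a top term $t_2 s\yy$ and hence the operators $s^{\epsilon}\yy^{k}\xx^{m}$ have visibly independent symbols. Since the representation is faithful on a spanning set after passing to associated graded, and the deformation is flat, faithfulness on all of $\H_{q,\ult}$ follows.

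The main obstacle I expect is the injectivity step rather than the relation-checking: one must be careful to set up a genuine PBW basis for the $C^\vee C_1$ DAHA in the $T_i$-presentation (as opposed to the more familiar $X,Y,T$ presentation), and to track how the two presentations are related via $\xx \sim T_1T_2$-type elements and $\yy$ built from $T_3,T_4$, so that the leading-term argument in the polynomial representation is unambiguous. In practice, the cleanest route is to cite the flatness of the deformation (Oblomkov, Sahi) to reduce to the classical limit $q,\ult\to$ generic, where the operators degenerate to the standard action of $\C[X^{\pm1},Y^{\pm1}]\rtimes\Z_2$ on $\C[X^{\pm1}]$, whose faithfulness is elementary, and then lift back by a dimension/rank count fiberwise over $\Spec\C[q^{\pm1},\ult^{\pm1}]$. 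The verification of the quadratic and product relations, while routine, does involve the rational-function coefficients $\frac{q^2\bar t_2\xx^2+q\bar t_1\xx}{1-q^2\xx^2}$ and $\frac{\bar t_3+\bar t_4\xx}{1-\xx^2}$, so some care with the commutation of $\xx$, $s$, and $\yy$ is needed — but this is exactly the computation carried out in \cite{Sah99} and \cite{NS04}, which we may invoke.
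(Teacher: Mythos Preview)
The paper does not give its own proof of this proposition: it is stated as a result of Sahi \cite{Sah99}, with a pointer to \cite[Thm.~2.22]{NS04} for the explicit operator formulas, and no further argument is supplied. Your outline --- verify the defining relations \eqref{ccdaharelations} on the operators (the product relation $T_4T_3T_2T_1=q$ telescopes exactly as you wrote, while the quadratic ones require the rational-coefficient computations you flag), then deduce injectivity from a PBW/filtration argument reducing to the faithfulness of $\C[X^{\pm1},Y^{\pm1}]\rtimes\Z_2$ on $\C[X^{\pm1}]$ --- is precisely the shape of the proof in those references, and you end up invoking them anyway. One small caution: your remark that the quadratic relation for $T_4=\xx^{-1}\hat T_3^{-1}$ ``uses the relation for $\hat T_3$'' understates the work, since the parameter is $t_4$ rather than $t_3$; the verification genuinely needs the specific form of the coefficient $\frac{\bar t_3+\bar t_4\xx}{1-\xx^2}$ and does not follow formally from the Hecke relation for $\hat T_3$ alone.
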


The Dunkl-type embedding above can be viewed as a map from $\H_{q,\ult}$ to a localization of $A_q\rtimes \Z_2$. (Here $A_q$ is the quantum torus, which is generated by $\xx^{\pm 1}$ and $\yy^{\pm 1}$ subject to the relation $\xx \yy = q^2 \yy \xx$, and $\Z_2$ acts on $A_q$ by inverting $\xx$ and $\yy$.) This embedding maps the spherical DAHA into (a localization of) the symmetric algebra $A_q^{\Z_2}$. In the specialization $q = \epsilon = \pm 1$ and $t_3 = t_4 = 1$, a short computation leads to the following description of this symmetric embedding:
\begin{align*}
x_\ult &\mapsto \xx + \xx^{-1}\\
y_\ult &\mapsto t_2(\yy + \yy^{-1}) + \left[ \bar t_2 (\xx \yy^{-1} - \xx^{-1}\yy) + \epsilon \bar t_1 (\yy - \yy^{-1})\right] \delta^{-1}\\
z_\ult &\mapsto \epsilon t_2 (\xx \yy + \xx^{1-}\yy^{-1}) + \left[ \bar t_2 (\yy - \yy^{-1}) + \epsilon \bar t_1(\xx \yy - \xx^{-1}\yy^{-1})\right] \delta^{-1}
\end{align*}
where $\delta := \xx - \xx^{-1}$. If we multiply numerators and denominators by $\delta$ we can rewrite them in terms of $x_T$, $y_T$, and $z_T$, which are the images of the curves $(1,0)$, $(0,1)$, and $(1,1)$ on the torus inside the algebra $A_q^{\Z_2}$. (Note that $z_T = q^{-1}(\xx\yy + \xx^{-1}\yy^{-1})$.) We then obtain the following:
\begin{corollary}\label{cor_symdunkl}
When $q = \epsilon = \pm 1$ and $t_3 = t_4 = 1$, the map $\SH_{\epsilon,t_1,t_2,1,1} \to \sk_\epsilon(T^2)^\loc$ is given by
\begin{align*}
x_\ult &\mapsto x_T\\
y_\ult &\mapsto t_2 y_T + 
\frac{-\bar t_2 (x_T^2 y_T - \epsilon x_T z_T - 2 y_T) + 
\bar t_1 (2 z_T - \epsilon x_T y_T)} 
{x_T^2 - 4}\\
z_\ult &\mapsto t_2 z_T + 
\frac{ \bar t_2( 2z_T - \epsilon x_T y_T) + \bar t_1 (\epsilon x_T z_T - 2y_T)}
{x_T^2-4}
\end{align*}
\end{corollary}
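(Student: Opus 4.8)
The statement to prove is Corollary~\ref{cor_symdunkl}, which is a purely computational consequence of the preceding material: the displayed formulas for $x_\ult, y_\ult, z_\ult$ in terms of the Dunkl operators on $\C[X^{\pm 1}]$ (in the specialization $q = \epsilon = \pm 1$, $t_3 = t_4 = 1$) need only be rewritten in terms of the curve functions $x_T, y_T, z_T$ on the torus. So the first move is to record the identifications $x_T = \xx + \xx^{-1}$, $y_T = \yy + \yy^{-1}$, $z_T = q^{-1}(\xx\yy + \xx^{-1}\yy^{-1})$, noting that in the specialization $q = \epsilon$ we have $z_T = \epsilon(\xx\yy + \xx^{-1}\yy^{-1})$, and that in the specialization $q = \pm 1$ the quantum torus $A_q$ becomes genuinely commutative, so $\xx$ and $\yy$ commute and all the manipulations below are ordinary Laurent-polynomial algebra in two commuting variables.

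\textbf{Key steps.} First I would clear denominators: the pre-Corollary display writes $y_\ult$ and $z_\ult$ with denominator $\delta = \xx - \xx^{-1}$, and the claim is that after multiplying through by $\delta$ both numerator and denominator lie in the subalgebra generated by $x_T, y_T, z_T$, with the denominator becoming $\delta^2 = (\xx - \xx^{-1})^2 = (\xx + \xx^{-1})^2 - 4 = x_T^2 - 4$. So the core of the proof is the identity, for each of the two bracketed numerators, that multiplying by an extra factor of $\delta$ turns the Laurent monomial expression into the stated polynomial in $x_T, y_T, z_T$. Concretely, for the $y_\ult$ formula I would verify
\[
\delta \cdot \bigl[\bar t_2(\xx\yy^{-1} - \xx^{-1}\yy) + \epsilon\bar t_1(\yy - \yy^{-1})\bigr] = -\bar t_2(x_T^2 y_T - \epsilon x_T z_T - 2y_T) + \bar t_1(2z_T - \epsilon x_T y_T),
\]
and analogously for the $z_\ult$ numerator, using the elementary expansions $x_T y_T = (\xx + \xx^{-1})(\yy + \yy^{-1})$, $x_T z_T = \epsilon(\xx + \xx^{-1})(\xx\yy + \xx^{-1}\yy^{-1})$, $x_T^2 y_T = (\xx^2 + 2 + \xx^{-2})(\yy + \yy^{-1})$, and so on. Each side expands into a short sum of monomials $\xx^a \yy^b$ with $a \in \{-2,-1,0,1,2\}$, $b \in \{-1,1\}$, and one checks the coefficients of $\bar t_1$ and $\bar t_2$ match term by term. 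Finally, since $x_\ult \mapsto \xx + \xx^{-1} = x_T$ needs no rewriting, assembling the three formulas gives exactly the display in the Corollary, and the target of the map is $\sk_\epsilon(T^2)^\loc$ — the localization of $\sk_\epsilon(T^2) \cong A_q^{\Z_2}|_{q=\epsilon}$ at $x_T^2 - 4$ — because that is precisely the denominator that appears.

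\textbf{The main obstacle.} There is no conceptual obstacle here: the result is bookkeeping, and the only real risk is a sign or power-of-$\epsilon$ slip, exactly the kind of thing the authors flag with their "corrected a typo" remarks elsewhere. The one place to be slightly careful is the factor $\epsilon$ in $z_T = \epsilon(\xx\yy + \xx^{-1}\yy^{-1})$ and in the cross-terms $x_T z_T$, $x_T y_T$ inside the $z_\ult$ numerator — one must consistently use $\epsilon^2 = 1$ to collapse products of two $\epsilon$'s, and track which monomial combinations produce $z_T$ (with its $\epsilon$) versus which produce $y_T$ or $x_T^2 y_T$ (without). I would organize the check by writing each Laurent expression in the ordered monomial basis and comparing columns, which makes the verification mechanical and the sign conventions transparent.
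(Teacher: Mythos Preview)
Your proposal is correct and follows exactly the paper's own argument: the text immediately preceding the Corollary says ``If we multiply numerators and denominators by $\delta$ we can rewrite them in terms of $x_T$, $y_T$, and $z_T$ \ldots\ We then obtain the following,'' and your plan of using $\delta^2 = x_T^2 - 4$ together with the elementary Laurent-monomial expansions of $x_T y_T$, $x_T z_T$, $x_T^2 y_T$ is precisely that computation spelled out. There is nothing to add beyond the mechanical coefficient check you describe.
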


\subsection{Example: the trefoil}\label{sec_extrefoil}
In this subsection we describe Conjecture \ref{conj_bs14} completely explicitly in the example of the trefoil. Several computations were done with the help of the computer (using \texttt{Macaulay2} \cite{M2} and \texttt{Mathematica}).
In this section only we use the abbreviations $S_n := S_n(x_T)$ and $T_n := T_n(x_T)$ for Chebyshev polynomials, which are defined by 
\[
S_n(X+X^{-1}) := \frac{X^{n+1}-X^{-n-1}}{X-X^{-1}},
\quad \quad T_n(X+X^{-1}) = X^n + X^{-n}
\]

Let $K$ be the trefoil in $S^3$. We first recall formulas from \cite{Gel02} for the action of $\SH_{q,\ulone}$ on the skein module $\sk_q(S^3\setminus K)$. (See \cite{BS16} for the conversion into the present notation.) As a module over $\C[x]$ the skein module $\sk_q(S^3\setminus K)$ is generated by two elements $u$ and $v$, and the action of $y_T$ and $z_T$ are given by the following:
\begin{eqnarray*}
y_T\cdot u &=& -(q^2+q^{-2})u\\
z_T\cdot u &=& -q^{-3}S_1u\\
y_T\cdot v &=& (q^6S_4-q^2)u + q^6T_6v\\
z_T\cdot v &=&  q^5S_3u + q^5T_5v
\end{eqnarray*}
The $A$-polynomial of the trefoil is $(L-1)(L+M^{-6})$. (Roughly, this describes the preimage in $(\C^\times)^2$ of the image in $\chr(T^2)$ of the character variety of the knot complement, where $(\C^\times)^2$ maps to $\chr(T^2)$ by sending $(\alpha,\beta)$ to the representation where the generators of $T^2$ are sent to diagonal matrices with upper-left entries $\alpha$ and $\beta$, respectively.) This symmetrizes to the polynomial
\[ 
A_T := (y_T+2)(y_T-T_6)
\]
(remember the negative sign in the map of Theorem \ref{thm_orep}). It is straightforward (at least with a computer) to check that when $q=-1$ the element $A_T \in \sk_{q=-1}(T^2)$ indeed annihilates the module $\sk_{q=-1}(S^3\setminus K)$.

We now give explicit expressions for the action of $y_\ult$ and $z_\ult$ on $u$ and $v$ with the parameter values $q=-1$ and $t_3=t_4=1$. 

\begin{align*}
y_\ult \cdot u &= -(t_2+t_2^{-1}) u\\
z_\ult \cdot u &= [t_2 S_1  - \bar t_1]u\\
y_\ult \cdot v &=  [t_2 (S_4-1) - \bar t_2 (S_4+S_2) + \bar t_1 (S_3+S_1)]u \\
& + [t_2 T_6 - \bar t_2 S_6 + \bar t_1 S_5]v \\
z_\ult \cdot v &= 
[t_2^{-1} S_3  + \bar t_2 S_1 - \bar t_1 (S_2+S_0)]u \\
&+ [-t_2 T_5 + \bar t_2 S_5 - \bar t_1 S_4]v
\end{align*}
It is now possible to compute that the following deformation of the symmetric $A$-polynomial $A_T$ annihilates the skein module:
\begin{equation}\label{eq_Adef}
A_\ult := (y_\ult + t_2 + t_2^{-1})(y_\ult - t_2^{-1}T_6 - \bar t_1 S_5 + \bar t_2 S_4)
\end{equation}

\begin{remark}
Since $\bar t_i = t_i - t_i^{-1}$, it is clear that the $t_1=t_2=1$ specialization of $A_\ult$ is equal to $A_T$. The choice of the element $A_\ult$ in \eqref{eq_Adef} is somewhat arbitrary since there is no canonical choice. This particular choice was made via computer experiment, and it seems to be the simplest obvious deformation of $A_T$ which ``has the same structure.'' However, we remind the reader that the action of $\SH_{q=-1,t_1,t_2,1,1}$ is canonical, even the particular choice of $A_\ult$ is not.
\end{remark}

\subsection{Example: the figure eight}\label{sec_exfig8}
In this subsection we give a explicit description of Conjecture \ref{conj_bs14} in the case of the figure eight knot, again with $q=-1$ and $t_2=t_4=1$, and with the help of a computer. We use the notation $S_n$ and $T_n$ for Chebyshev polynomials as in Section \ref{sec_extrefoil}.

Let $K$ be the figure eight knot and $N := \sk_{q=-1}(S^3\setminus K)$ the skein module of its complement. We recall from \cite{GS04} that as a module over $\C[x]$, the skein module $N$ is freely generated by elements $p$, $u$, and $v$ (again, see \cite{BS16} for conversion into the present notation). Formulas for the action of $y_T$ and $z_T$ at $q=-1$ are given by
\begin{align*}
y_T\cdot p &= -2p\\
y_T\cdot u &= (S_2+1)p + (-T_4+T_2+T_0)u\\
y_T\cdot v &= (-S_2-1)p + (-T_4+T_2+T_0)v\\
z_T\cdot p &= S_1 p\\
z_T\cdot u &= -S_3p +(T_5-T_3-T_1)u + (T_3-T_1)v\\
z_T\cdot v &= 2S_1p + (-T_3+T_1)u + (T_3-2T_1)v
\end{align*}
The $A$-polynomial for the figure eight knot is $(L-1)(L+L^{-1}+-M^4+M^2+2+M^{-2}-M^{-4})$, and a symmetric version is given by
\begin{equation}\label{eq_Afig8}
A := (y_T+2)(y_T+T_4-T_2-T_0)
\end{equation}
(Again, the apparent change in signs is explained by the signs in Theorem \ref{thm_orep}.)
One may now compute that the action of $y_\ult \in \SH_{q=-1,t_1,t_2}$ is given by the formulas
\begin{align}
y_\ult\cdot p &= -(t_2+t_2^{-1})p\notag\\
y_\ult\cdot u &= [t_2(S_2+1) - \bar t_1 S_1]p\notag\\
&+ [t_2(-T_4+T_2+T_0) - \bar t_2 S_2 + \bto T_3]u\notag\\
&+ [-\btt (S_2+1) + 2\bto S_1]v\notag\\
&=: ap+bu+cv\label{eq_consts1}\\
y_\ult\cdot v &= [-t_2^{-1}(S_2+1)-\bto S_1]p\notag\\
&+ [\btt (S_2+1)-2\bto S_1]u\notag\\
&+[t_2^{-1}(-T_4+T_2+T_0)+\btt S_2-\bto T_3]v\notag\\
&=: dp+eu+fv\label{eq_consts2}
\end{align}
Similarly, the action of $z_\ult$ is given by the formulas
\begin{align*}
z_\ult\cdot p &= [t_2 S_1 - \bto]p\\
z_\ult\cdot u &= [-t_2 S_3+\bto(S_2+1)-\btt S_1]p\\
&+ [t_2(T_5-T_3-T_1)+\btt(T_3)+\bto(-T_4+1)]u\\
&+[t_2(T_3-T_1)+2\btt S_1-\btt(S_2+1)] v\\
z_{\ult}\cdot v &= [(t_2+t_2^{-1})S_1] p\\
&+[t_2(-T_3+T_1)-2\btt S_1+\bto(S_2+1)]u\\
&+[t_2^{-1}(T_3-2T_1)-2\btt S_1+\bto S_2]v
\end{align*}
As a sanity check, one may check directly (or by computer) that the above formulas satisfy the cubic relation of Theorem \ref{thm_terwilliger} (specialized to $q=-1$ and $t_3=t_4=1$). One can also check that the following element annihilates the skein module $N$:
\begin{equation}
\tilde A := (y_\ult+t_2+t_2^{-1})(y_\ult^2 - (b + f)y_\ult + (bf-ce))
\end{equation}
where the constants in the formula were defined in equations \eqref{eq_consts1} and \eqref{eq_consts2}. In fact, it is obvious that $\tilde A$ annihilates $N$: the element $p$ generates a submodule of $N$ annihilated by $y+t_2+t_2^{-1}$, and the second factor in the definition of $\tilde A$ is just the characteristic polynomial of $y_\ult$, viewed as an operator on the $\C[x]$-module $N / p$.

\begin{remark}
Experimentally, the polynomial $A$ defined in \eqref{eq_Afig8} does not seem to have a deformation which annihilates $N$. (In particular, the $bf-ce$ term doesn't factor unless $t_1=t_2=1$.) However, one can check that if we specialize $t_1=t_2=1$, then 
\[
\tilde A_{t_1=t_2=1} = (y+2)(y+T_4-T_2-T_0)^2
\]
In particular, in this specialization the scheme defined by $\tilde A$ is the same as that defined by $A$, except that one component has been ``fattened.''
\end{remark}

\section{Deformations of the peripheral map and the Brumfiel-Hilden algebra}\label{sec_bh}

Let $K\subset S^3$ be a knot, $M := S^3\setminus K$ its complement, $\pi := \pi_1(M)$ its fundamental group. The fundamental group of the torus boundary of $M$ maps to $\pi$ via the \emph{peripheral map}:
\[
\Z \stackrel{\alpha_m}{\longrightarrow} \Z^2 \stackrel{\alpha}{\longrightarrow} \pi
\]
where we have fixed generators of $\Z^2$ to be the standard longitude and meridian of $K$, and where the image of the generator under $\alpha_m$ is the meridian. By Corollary \ref{cor_toruschar} and Theorem \ref{thm_orep} the peripheral map induces the following map of commutative algebras:
\begin{equation*}
 \alpha_*: \SH_{q=-1,t_i=1} \to \ochar(\pi_1(M))
\end{equation*}
where $\ochar(\pi_1(M)) := \C[\Rep(\pi_1(M),\SL_2(\C))]^{\SL_2(\C)}$ is the ring of functions on the character variety of the knot complement.
\begin{lemma}
 For $q = -1$, the action of $\SH_{q,\ult}$ on $\sk_q(M)$ conjectured in \cite[Conj. 1]{BS16} arises from an algebra homomorphism
 \[
  (\alpha_t)_*: \SH_{-1,t_1,t_2,1,1} \to \ochar(\pi_1(M))
 \]
which we call a \emph{deformed peripheral map}.
\end{lemma}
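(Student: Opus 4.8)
The plan is to unwind the definitions so that the asserted homomorphism $(\alpha_t)_*$ is literally the map obtained by composing the isomorphism $\SH_{q=-1,1,1,1,1} \cong \sk_{q=-1}(T^2)$ of Corollary \ref{cor_toruschar} with the Dunkl-type embedding of Corollary \ref{cor_symdunkl} and then restricting through the conjectured action. More precisely, Conjecture \ref{conj_bs14} at $q=-1$ posits an action of $\SH_{-1,t_1,t_2,1,1}$ on $\sk_{-1}(M)$; such an action is, by definition of a module over an algebra, an algebra homomorphism $\SH_{-1,t_1,t_2,1,1} \to \End_{\C}(\sk_{-1}(M))$. By Theorem \ref{thm_orep} the skein module $\sk_{-1}(M)$ is identified with $\O\chr(\pi_1(M))$ as a commutative ring, so I first observe that the conjectured action commutes with multiplication by $\O\chr(\pi_1(M))$ — this is part of the statement of Conjecture \ref{conj_bs14}, since the action is required to extend the $\sk_{-1}(T^2) = \SH_{-1,1,1,1,1}$-module structure, and the $\sk_{-1}(T^2)$-action is by skein multiplication, which is $\O\chr(M)$-linear in the $q=-1$ specialization.

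Next I would use the elementary fact that if a commutative ring $R$ acts on itself by an algebra homomorphism $\phi: S \to \End_{\C}(R)$ whose image lands in the commutant of left-multiplications $R \hookrightarrow \End_{\C}(R)$, then $\phi$ factors through evaluation at $1 \in R$: indeed, for $R$ commutative, the commutant of $R$ acting on itself by multiplication is again $R$ (every such endomorphism is multiplication by its value on $1$). Applying this with $R = \O\chr(\pi_1(M))$ and $S = \SH_{-1,t_1,t_2,1,1}$ yields an algebra homomorphism $(\alpha_t)_* : \SH_{-1,t_1,t_2,1,1} \to \O\chr(\pi_1(M))$, namely $a \mapsto a \cdot 1$, where $1 \in \sk_{-1}(M)$ is the empty link. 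Finally I would check the compatibility claim implicit in calling this a \emph{deformed} peripheral map: setting $t_1=t_2=1$ recovers $\alpha_*$, because in that specialization the $\SH$-action is the skein-module action of $\sk_{-1}(T^2)$, and evaluating it at the empty link is exactly the map induced by the peripheral inclusion under the identifications of Corollary \ref{cor_toruschar} and Theorem \ref{thm_orep}.

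The only genuine content — and the step I expect to be the real obstacle — is not in this lemma at all but in justifying the hypothesis: Conjecture \ref{conj_bs14} is a conjecture, so strictly the lemma is conditional on it. Within the lemma itself, the subtle point is verifying that the conjectured $\SH$-action really is $\O\chr(M)$-linear (equivalently, that it extends the skein-multiplication action of the torus skein algebra rather than merely some abstract module structure); this is where one must be careful that the ``$x$'' generator of $\SH$ maps to the curve $x_T$, which acts by skein multiplication, and hence that the $\C[x_T]$-linearity built into the trefoil and figure-eight computations in Sections \ref{sec_extrefoil}--\ref{sec_exfig8} is an instance of the general $\O\chr(M)$-linearity needed here. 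Once that linearity is in hand, the factorization through evaluation at the empty link is formal, and the resulting map is manifestly an algebra homomorphism into the commutative ring $\O\chr(\pi_1(M))$, as claimed.
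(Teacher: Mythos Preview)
Your overall strategy---define $(\alpha_t)_*$ by $a\mapsto a\cdot\varnothing$ and check this is an algebra map---is exactly what the paper does. The discrepancy is in how you justify that this evaluation-at-$1$ map is multiplicative.

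You argue that the conjectured $\SH_{-1,\ult}$-action is $\O\chr(M)$-linear, and then invoke the fact that the commutant of $R$ in $\End_\C(R)$ is $R$. The second step is fine; the first is where the gap lies. You claim $\O\chr(M)$-linearity ``is part of the statement of Conjecture~\ref{conj_bs14}, since the action is required to extend the $\sk_{-1}(T^2)$-module structure, and the $\sk_{-1}(T^2)$-action is $\O\chr(M)$-linear.'' But an extension of an $R$-linear action need not itself be $R$-linear: knowing that the subalgebra $\SH_{-1,\ulone}\subset\SH_{-1,\ult}$ acts by multiplication tells you nothing about how the extra generators act. Your later remark about the $x$-generator confirms only $\C[x_T]$-linearity, which is far weaker than $\O\chr(M)$-linearity.

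The paper avoids this entirely by working one level up. It writes down the commutative diagram in which the Dunkl embedding $\Phi_{-1,\ult}:\SH_{-1,\ult}\hookrightarrow\ochar(\partial M)^\loc$ is an \emph{algebra} map into a commutative ring, and $\alpha_*^\loc:\ochar(\partial M)^\loc\to\ochar(M)^\loc$ is also an algebra map. Hence the composite $\SH_{-1,\ult}\to\ochar(M)^\loc$ is automatically an algebra homomorphism, with no linearity argument needed; the conjecture then says its image lies in $\ochar(M)$. You actually mention the Dunkl embedding in your first sentence, but then drop it in favor of the commutant argument---if you had instead observed that the action of $\ochar(\partial M)^\loc$ on $\ochar(M)^\loc$ is by multiplication via $\alpha_*^\loc$, the $\O\chr(M)^\loc$-linearity you want would have been immediate, and your argument would then coincide with the paper's.
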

\begin{proof}
 Consider the commutative diagram
\begin{equation}\label{dia_qeq1}
\begin{diagram}
 & & \ochar(\partial M) & \rTo^{\alpha_*} & \ochar(M) \\
 &  & \dTo^\loc &  & \dTo^\loc \\
\SH_{-1,\ult} & \rInto^{\Phi_{-1,\ult} } & \ochar(\partial M)^\loc & \rTo^{\alpha_*^\loc} & \ochar(M)^\loc
\end{diagram}
\end{equation}
which is obtained from the diagram of $\sk_q(\partial M)$-modules by specializing $q=-1$:
\begin{diagram}
 & & \sk_q(\partial M) & \rTo & \sk_q(M)\\
 &  & \dTo^\loc &  & \dTo^\loc\\
\SH_{q,\ult} & \rInto & \sk_q(\partial M)^\loc & \rTo & \sk_q(M)^\loc
\end{diagram}
(The top horizontal map is given by $a \mapsto a\cdot \varnothing$, which is a map of left modules for general $q$ and a map of commutative algebras when $q=-1$.) Conjecture 1 says that $\SH_{q,\ult}[\sk_q(M)] \subset \sk_q(M) \subset \sk_q(M)^\loc$, which implies
\begin{equation}\label{eq_cond3}
 \SH_{q,\ult}\cdot \varnothing \subset \sk_q(M)
\end{equation}
The diagram (\ref{dia_qeq1}) consists of \emph{algebra} homomorphisms, and condition (\ref{eq_cond3}) specializes to 
\[
 \Phi_{-1,\ult}(\SH_{-1,\ult})\cdot 1 \subset \ochar(M)
\]
This shows that $\Phi_{-1,\ult}:\SH_{-1,\ult} \to \ochar(M)$ is an algebra map.
\end{proof}

Geometrically, we thus have a morphism of schemes
\[
 \alpha_\ult: \chr(M) \to \mathrm{Spec}(\SH_{-1,\ult})
\]
that is a deformation of the classical restriction map. 

\subsection{The Brumfiel-Hilden conjecture}
\la{BHC}
We now recall the definition of the Brumfiel-Hilden algebras  from \cite{BH95}. 
If $\pi$ is a finitely generated discrete group, these algebras are defined as follows:
\begin{equation}\label{bhdef}
H[\pi] := \frac{\C[\pi]}{\langle g(h+h^{-1}) - (h+h^{-1})g\rangle},\quad \quad 
H^+[\pi] := \langle g + g^{-1} \mid g \in \pi\rangle \subset H[\pi] 
\end{equation}
A conceptual explanation for these definitions is given by the following theorem.
\begin{theorem}[{\cite{BH95}}]\label{bhthm}
If $\pi$ is a finitely generated group, then
\begin{enumerate}
\item 
The commutative algebra $H^+[\pi]$ is isomorphic to $\O(\chr(\pi))$, the ring of functions on the $\SL_2(\C)$ character variety of $\pi$. 
\item The algebra $H[\pi]$ is isomorphic to the ring $\Gamma(\mathrm{Rep} (\pi),M_2(\C))^{\SL_2(\C)}$ of $\SL_2(\C)$-equivariant matrix-valued functions on the $\SL_2(\C)$ representation variety of $\pi$.
\end{enumerate}
\end{theorem}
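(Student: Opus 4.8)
The plan is to construct the isomorphisms in both parts explicitly, treating part (2) first and deducing part (1) by taking $\sigma$-invariants. First I would define the natural map $\Phi\colon \C[\pi]\to \Gamma(\Rep(\pi),M_2(\C))^{\SL_2(\C)}$ sending a group element $g$ to the function $\rho\mapsto \rho(g)$; this is $\SL_2$-equivariant since conjugating the representation conjugates the matrix, and it is clearly an algebra homomorphism. The key algebraic input is the Cayley--Hamilton identity in $\SL_2$: for any $A\in\SL_2(\C)$ one has $A+A^{-1}=\tr(A)\,\mathrm{Id}$, so $\rho(h)+\rho(h)^{-1}$ is a scalar matrix for every $h\in\pi$, hence central in $M_2(\C)$ and a fortiori commutes with $\rho(g)$ for all $g$. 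Therefore $\Phi$ kills the two-sided ideal $\langle g(h+h^{-1})-(h+h^{-1})g\rangle$ and descends to $\bar\Phi\colon H[\pi]\to \Gamma(\Rep(\pi),M_2(\C))^{\SL_2(\C)}$.

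Next I would prove $\bar\Phi$ is surjective. An equivariant $M_2$-valued regular function on $\Rep(\pi)$ can be written, using the decomposition $M_2(\C)=\C\cdot\mathrm{Id}\oplus\sl_2(\C)$ and classical invariant theory for $\SL_2$ acting by conjugation, as a combination of the identity matrix and products $\rho(g_1)\cdots\rho(g_k)$ with coefficients that are $\SL_2$-invariant scalar functions, i.e.\ elements of $\O(\chr(\pi))$; by the trace identities (every invariant scalar function is a polynomial in traces of words, and $\tr(\rho(g))=\rho(g)+\rho(g)^{-1}$ as an element in the image of $\bar\Phi$) these coefficients already lie in the image of $\bar\Phi$, so the whole function does. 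For injectivity one filters $H[\pi]$ by word length and argues that a relation among the matrices $\rho(w)$ holding for all representations forces the corresponding element of $H[\pi]$ to vanish; concretely, one reduces modulo the defining relations to a normal form (pushing all ``symmetric'' factors $h+h^{-1}$ to one side, where they become scalars) and checks that distinct normal-form elements are separated by evaluation at suitable representations — for instance at representations into a free quotient or at generic points, where the matrices $\rho(w)$ for reduced words $w$ are linearly independent over the scalars. This is essentially the argument of \cite[\S2--\S3]{BH95}, and I would cite it rather than reproduce the bookkeeping.

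For part (1), note that the anti-automorphism $\sigma\colon g\mapsto g^{-1}$ on $H[\pi]$ corresponds under $\bar\Phi$ to the anti-automorphism $A\mapsto A^{-1}$ on matrices (again because $\det=1$ gives $A^{-1}=\tr(A)\mathrm{Id}-A$, so this is even a linear operation after correcting by the scalar); hence $\bar\Phi$ restricts to an isomorphism from $H^+[\pi]=\{a=\sigma(a)\}$ onto the subalgebra of equivariant functions valued in scalar matrices, which is exactly $\O(\Rep(\pi))^{\SL_2}=\O(\chr(\pi))$. The generators $g+g^{-1}$ of $H^+[\pi]$ map to $\tr(\rho(g))\,\mathrm{Id}$, recovering the familiar trace functions; that these generate $\O(\chr(\pi))$ is the Procesi--Fricke theorem for $\SL_2$ character varieties.

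The main obstacle is injectivity of $\bar\Phi$: surjectivity and the passage to $\sigma$-invariants are formal once the Cayley--Hamilton identity is in hand, but showing that the commutation relations in \eqref{bhdef} are \emph{all} the relations — that no further identity holds simultaneously for every $\SL_2$-representation of $\pi$ — requires the normal-form analysis of the Brumfiel--Hilden algebra together with a separation-of-points argument, and this is where one genuinely leans on \cite{BH95}. I would organize the write-up so that the Cayley--Hamilton reduction and the normal form are stated as a lemma, with the verification referred to \cite[Prop.~9.1]{BH95} and the surrounding discussion.
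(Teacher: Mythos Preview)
The paper does not give a proof of this theorem: it is stated with attribution to \cite{BH95} and followed only by a short Remark describing the two maps (namely $g+g^{-1}\mapsto \bigl(\rho\mapsto \Tr(\rho(g+g^{-1}))\bigr)$ and $g\mapsto \bigl(\rho\mapsto \rho(g)\bigr)$) together with the Cayley--Hamilton observation that $A+A^{-1}=\Tr(A)\,\mathrm{Id}$ is central in $M_2(\C)$. Your construction of $\Phi$ and your argument that it factors through $H[\pi]$ are exactly this Remark, so on that level your proposal agrees with everything the paper says.

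Since there is no proof in the paper to compare against, let me just comment on your sketch. The well-definedness and the identification of $\sigma$-invariants with scalar-valued equivariant functions are correct (under $\bar\Phi$ the anti-involution $\sigma$ becomes the adjugate $A\mapsto\mathrm{adj}(A)$, and a $2\times 2$ matrix fixed by adjugate is scalar). Your surjectivity paragraph is a bit loose --- writing an arbitrary equivariant $M_2$-valued function as a combination of products $\rho(g_1)\cdots\rho(g_k)$ with invariant scalar coefficients is itself a nontrivial invariant-theory statement, essentially the first fundamental theorem for $\SL_2$ acting on several copies of $M_2$ by simultaneous conjugation --- but you are right that this is where Procesi-type results enter, and you correctly identify injectivity as the part that genuinely requires the normal-form machinery of \cite{BH95}. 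One small caution: the paper uses two descriptions of $H^+[\pi]$ (the subalgebra generated by $g+g^{-1}$ in \eqref{bhdef}, versus the $\sigma$-fixed subalgebra in the introduction), and your argument for part (1) uses the latter; their coincidence is part of what \cite{BH95} establishes, so if you are citing \cite[Prop.~9.1]{BH95} anyway this is harmless.
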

\begin{remark}
The first map sends $g+g^{-1}$ to the function $\rho \mapsto \Tr(\rho(g+g^{-1})$. The second sends $g$ to the matrix-valued function $\rho \mapsto \rho(g)$, which is well-defined because if $A \in \SL_2$ then $A+A^{-1} = \Tr(A) \mathrm{Id}$ is central in the ring of $2\times 2$ matrices.
\end{remark}
We also recall the \emph{Brumfiel-Hilden conjecture}, see \cite[pg. 122]{BH95}:
\begin{conjecture}[\cite{BH95}]\label{bhconj}
Let $\pi = \pi_1(S^3\setminus K)$ be the fundamental group of the complement of a knot in $S^3$, and let $X,Y \in H[\pi]$ be the standard meridian and longitude of $K$. Then
\begin{equation}\label{bhcond}
Y \in H^+[X^{\pm 1}]
\end{equation}
where the right hand side is the subalgebra of $H[\pi]$ generated by $H^+[\pi]$ and the elements $X^{\pm 1} \in H[\pi]$.
\end{conjecture}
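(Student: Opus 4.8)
The plan is to reduce the conjecture, by a short formal manipulation in $H[\pi]$, to a single divisibility (pole–cancellation) statement, and then to attack that statement geometrically via the description of $H[\pi]$ as matrix–valued functions in Theorem \ref{bhthm}. Two structural facts are used throughout, both immediate from the presentation \eqref{bhdef}: first, every element $h+h^{-1}$ commutes with all of $\C[\pi]$, so $H^+[\pi]$ is \emph{central} in $H[\pi]$; second, the meridian and longitude commute in $\pi$, hence $XY=YX$ in $H[\pi]$. Set $v:=Y+Y^{-1}$ and $w:=XY+(XY)^{-1}=XY+Y^{-1}X^{-1}$, both of which lie in $H^+[\pi]$. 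Using $XY=YX$ (so that $X^{-1}Y=YX^{-1}$ and $X^{-1}Y^{-1}=Y^{-1}X^{-1}$) one computes
\begin{equation*}
w - X^{-1}v \;=\; XY - YX^{-1} \;=\; Y\,(X-X^{-1}).
\end{equation*}
Since $v,w\in H^+[\pi]$ and $X^{-1}\in H^+[X^{\pm1}]$, the left-hand side lies in $H^+[X^{\pm1}]$. Thus $Y(X-X^{-1})\in H^+[X^{\pm1}]$ \emph{unconditionally}, and the conjecture is equivalent to cancelling the factor $(X-X^{-1})$: one must show that $Y(X-X^{-1})$ lies in $(X-X^{-1})\,H^+[X^{\pm1}]$ with quotient $Y$.

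To see what this cancellation amounts to, I would pass to the geometric picture. By Theorem \ref{bhthm}, $H[\pi]\cong\Gamma(\Rep(\pi),M_2(\C))^{\SL_2(\C)}$, and by Cayley--Hamilton every $X^{n}$ is an $H^+[\pi]$-combination of $\id$ and $X$; hence $H^+[X^{\pm1}]=H^+[\pi]\cdot\id\;\oplus\;H^+[\pi]\cdot X$ as a free rank-two module, and the Brumfiel--Hilden condition is precisely the assertion $Y=\lambda\,\id+\mu X$ with $\lambda,\mu\in H^+[\pi]$. On the open locus $\tr(X)\neq\pm2$ the matrix $\rho(X)$ is regular semisimple, so the commuting matrix $\rho(Y)$ is a polynomial in it, giving such a decomposition with $\lambda,\mu$ \emph{a priori} only regular there. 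Taking traces,
\begin{equation*}
\tr(Y)=2\lambda+(\tr X)\,\mu, \qquad \tr(YX)=(\tr X)\,\lambda+\big((\tr X)^2-2\big)\mu ,
\end{equation*}
whose determinant is $(\tr X)^2-4$, the image of $(X-X^{-1})^2$. Cramer's rule then expresses $\lambda,\mu\in\Frac(H^+[\pi])$ with denominator $(\tr X)^2-4$; equivalently, Step~1 already shows the pole of $Y$ along $\{(\tr X)^2=4\}$ is mild, and the conjecture is the claim that it is absent, i.e. that $\lambda,\mu\in H^+[\pi]$.

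The main obstacle, then, is exactly this regularity at the characters with $\tr(X)=\pm2$ (meridian central or parabolic); this is where all the difficulty concentrates and why the general conjecture is subtle. The decisive extra input I would try to exploit is the topology of the longitude: $Y$ is null-homologous, so $\rho(Y)=\id$ at every abelian character, forcing $(\lambda,\mu)=(1,0)$ along the abelian component and removing any pole there trivially. The remaining task is to rule out poles at the finitely many points where non-abelian components meet $\{\tr X=\pm2\}$ (the Riley-type reducible/ideal points, governed by the Alexander and $A$-polynomials). I would attempt this using normality of the character curve together with a bound on the pole order of $\lambda,\mu$ coming from Step~1, to force the numerators in the Cramer formulas to be divisible by $(\tr X)^2-4$. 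I do not expect this local analysis to go through for \emph{arbitrary} knots by general principles alone --- controlling the peripheral behaviour uniformly appears to require the fine structure of the peripheral system, which is the very content of the conjecture. Accordingly, for the specific families (torus, $2$-bridge, certain pretzel knots, and connect sums) the realistic route is to verify the cancellation directly from explicit group presentations of $H[\pi]$, rather than to establish it for all $K$ at once.
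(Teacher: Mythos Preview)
The statement is a \emph{conjecture}; the paper does not prove it in general, and neither do you---you correctly recognise that the pole-cancellation argument cannot be pushed through for arbitrary knots and that explicit verification for each family is the only available route, which is precisely what the paper does.  So there is no gap to flag: your proposal is an honest strategy sketch that lands in the same place as the paper.

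Your reformulation is equivalent to, but packaged differently from, the paper's.  Your identity $w-X^{-1}v=Y(X-X^{-1})$ and the decomposition $Y=\lambda+\mu X$ with $\lambda,\mu\in H^+[\pi]$ amounts exactly to condition~(1) of Proposition~\ref{prop_3conditions}, namely $Y-Y^{-1}\in H^+\delta$: write $Y=A+B\delta$ and apply the anti-involution $g\mapsto g^{-1}$ to get $Y^{-1}=A-B\delta$.  From there the paper takes a different turn: instead of a Cramer/pole analysis on $\chr(\pi)$, it introduces an auxiliary $H^+$-module $N\subset H[\delta^{-1}]$ and recasts the condition as $(s+Y)\cdot N\subset\delta N$, which is exactly the shape needed to make the Dunkl-type operators of the DAHA act (Corollary~\ref{cor_BH_def}).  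Your geometric viewpoint is illuminating and not developed in the paper; the paper's module $N$ buys the direct bridge to the DAHA conjecture that motivates the whole question.  For the actual verifications (torus, $2$-bridge, pretzel), the paper works with an Ore-extension presentation of $H[\pi]$ for two-generator one-relator groups and reduces everything to ideal membership in an explicit polynomial ring---quite far from your matrix-function picture.

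One caveat: you assert $H^+[X^{\pm1}]=H^+\oplus H^+X$ is \emph{free} of rank two, which is equivalent to multiplication by $\delta$ being injective on $H^+$; the paper does not take this for granted and lists it as a separate hypothesis in Corollary~\ref{cor_BH_def}.  Your instinct that the obstruction concentrates at $\tr X=\pm2$ is borne out in Section~\ref{sec_furtherremarks}, where the paper shows that a sufficient $A$-polynomial condition ($A_K(\pm1,l)\mid l^2-1$) already fails for the figure eight and most torus knots---confirming that a uniform geometric argument along your lines is unlikely to succeed.
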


Our goal is to relate this conjecture to the main conjecture of \cite{BS16}. To shorten notation, we write $H^+ := H^+[\pi]$ and $H := H[\pi]$, and we write 
\[
\delta := X - X^{-1}
\]
We also define an operator $s: H \to H$ by the formula $s\cdot g := g^{-1}$.
Finally, we define the following $H^+$-module:
\begin{equation}\label{eq_defN}
 N := H^+[X^\pmone] + H^+[X^{\pm 1}](Y+1)\delta^{-1} \subset H[\delta^{-1}]
\end{equation}
To clarify, $H[\delta^{-1}]$ is considered as an $H$-module, and \emph{not} as an algebra (so that we don't have to deal with the issue of localizing noncommutative algebras). More formally, we define $H[\delta^{-1}]$ as $H \otimes_{\c[X^{\pm 1}]} \c[X^{\pm 1}]\delta$, where the right hand term of the tensor product is the $\C[X^{\pm 1}]$ submodule of the algebra $\C[X^{\pm 1},\delta^{-1}]$ generated by $\delta^{-1}$. 
The action of $s$ on $H$ clearly extends to $H[\delta^{-1}]$ via $s\cdot \delta = -\delta$.

\begin{proposition}\label{prop_3conditions}
 The following conditions are equivalent:
 \begin{enumerate}
  \item $Y-Y^{-1} \in H^+\delta$\label{i1}
  \item $Y \in H^+[X^{\pm 1}]$\label{i2}
  \item \label{i3} $\e N = H^+$ and
  $(s+Y)\cdot N \subset \delta N$, where $\e := (1+s)/2$ and $s$ acts on $N$ as in the previous paragraph.
 \end{enumerate}
\end{proposition}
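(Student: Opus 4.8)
The plan is to prove the cycle of implications $(\ref{i1}) \Rightarrow (\ref{i2}) \Rightarrow (\ref{i3}) \Rightarrow (\ref{i1})$, exploiting that $\sigma$ (here written $s$) is multiplicative-twisted and that $H^+$ is exactly the $s$-fixed subalgebra, while $\delta$ is $s$-anti-invariant ($s\cdot\delta = -\delta$).

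First, $(\ref{i1}) \Rightarrow (\ref{i2})$: Suppose $Y - Y^{-1} = h\delta$ for some $h \in H^+$. Since $A + A^{-1}$ is central for $A \in \SL_2$, in $H[\pi]$ the element $Y + Y^{-1}$ commutes with everything in the image of $\C[\pi]$, so $Y + Y^{-1} \in H^+[\pi]$ trivially (it is $\sigma$-fixed). Then $2Y = (Y + Y^{-1}) + (Y - Y^{-1}) = (Y + Y^{-1}) + h\delta = (Y+Y^{-1}) + h(X - X^{-1})$, which visibly lies in $H^+[X^{\pm 1}]$; hence $Y \in H^+[X^{\pm 1}]$ (we are over $\C$, so dividing by $2$ is harmless).

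Next, $(\ref{i2}) \Rightarrow (\ref{i1})$ (which together with the above gives $(\ref{i1}) \Leftrightarrow (\ref{i2})$, so I can treat these as interchangeable): if $Y \in H^+[X^{\pm 1}]$, write $Y = \sum_k a_k X^k$ with $a_k \in H^+$ — one must first check that $H^+[X^{\pm 1}]$ is spanned by such monomials, which follows because $X^{\pm 1}$ commutes past $H^+$ up to the relation $X^{\pm 1} h - h X^{\pm 1}$ lying again in the span of lower terms (or more cleanly: $H^+[X^{\pm 1}]$ is generated as a left $H^+$-module by powers of $X$, using $X(h + h^{-1}) = (h+h^{-1})X$ to move $X$'s to the right). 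Applying $s$ and using $s(a_k) = a_k$, $s(X^k) = X^{-k}$, one finds $Y^{-1} = s(Y) = \sum_k a_k X^{-k}$, so $Y - Y^{-1} = \sum_k a_k(X^k - X^{-k})$. Since $X^k - X^{-k} = (X - X^{-1})\cdot S_{k-1}(X + X^{-1})$ for the Chebyshev-type polynomial $S_{k-1}$, and $X + X^{-1} \in H^+$ is central-ish (commutes with $H^+$), each term is $\delta$ times an element of $H^+$; summing gives $Y - Y^{-1} \in H^+\delta$.

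Finally, the equivalence with $(\ref{i3})$. Here the key computation is to understand the $H^+$-module $N = H^+[X^{\pm 1}] + H^+[X^{\pm 1}](Y+1)\delta^{-1}$ and the operators $\e = (1+s)/2$ and $(s + Y)$ on it. For the condition $\e N = H^+$: note $\e$ projects onto the $s$-fixed part; $\e(H^+[X^{\pm 1}]) = H^+$ and $\e\big(H^+[X^{\pm 1}](Y+1)\delta^{-1}\big)$ requires knowing $s\big((Y+1)\delta^{-1}\big) = (Y^{-1}+1)(-\delta^{-1})$, so $\e$ applied to $h(Y+1)\delta^{-1}$ (with $h \in H^+[X^{\pm 1}]$, or at least $h\in H^+$) produces $\tfrac12(h(Y+1) - s(h)(Y^{-1}+1))\delta^{-1}$; when $h \in H^+$ this is $\tfrac12 h(Y - Y^{-1})\delta^{-1}$, which lands in $H^+$ precisely when $Y - Y^{-1} \in H^+\delta$, i.e. under condition $(\ref{i1})$. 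For the second half of $(\ref{i3})$, $(s+Y)\cdot N \subset \delta N$: on the generator $1 \in H^+ \subset N$ we get $(s+Y)\cdot 1 = 1 + Y$, and we need $1 + Y \in \delta N$; since $(Y+1)\delta^{-1}$ is by construction a generator of $N$, indeed $1 + Y = \delta \cdot (Y+1)\delta^{-1} \in \delta N$ — but one must also check the generator $(Y+1)\delta^{-1}$ and elements $X^k$, which is where condition $(\ref{i1})$ re-enters (e.g. $(s+Y)\cdot X^k = X^{-k} + YX^k$, and rewriting this as $\delta$ times something in $N$ uses $Y \in H^+[X^{\pm 1}]$ and the Chebyshev identity above). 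So I would show $(\ref{i1}) \Rightarrow (\ref{i3})$ by the explicit computations just sketched, and $(\ref{i3}) \Rightarrow (\ref{i1})$ by applying the inclusion $(s+Y)\cdot 1 \in \delta N$ combined with $\e N = H^+$: from $1 + Y \in \delta N$ and applying $\e$ to deduce a relation forcing $Y - Y^{-1} \in H^+\delta$.

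\textbf{Main obstacle.} The routine-looking part — that $(\ref{i1}) \Leftrightarrow (\ref{i2})$ — is genuinely easy once one records that $H^+[X^{\pm 1}]$ is free (or at least spanned) over $H^+$ on powers of $X$ and that $X^k - X^{-k} \in H^+ \delta$. The real work is the equivalence with $(\ref{i3})$: one has to pin down $N$ as an $H^+$-module concretely enough to compute $\e N$ and the action of $s + Y$, and to check that $\delta N$ is what one expects (this requires that multiplication by $\delta$ on $N \subset H[\delta^{-1}]$ behaves well, i.e. $\delta N \supset H^+\delta + H^+(Y+1)$, using that $\delta$ is a non-zero-divisor on $H[\delta^{-1}]$ by construction). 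I expect the bookkeeping around $s$ acting on $\delta^{-1}$-twisted elements, and the direction $(\ref{i3}) \Rightarrow (\ref{i1})$ — extracting condition $(\ref{i1})$ back out of the two module-theoretic conditions — to be the steps needing the most care.
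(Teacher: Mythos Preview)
Your approach is essentially the paper's: first show $(\ref{i1})\Leftrightarrow(\ref{i2})$, then $(\ref{i1})\Rightarrow(\ref{i3})$ by direct computation on generators of $N$, then $(\ref{i3})\Rightarrow(\ref{i1})$ by manipulating the containment $(s+Y)N\subset\delta N$.

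Two small points. For $(\ref{i2})\Rightarrow(\ref{i1})$ the paper avoids your Chebyshev computation by using directly that $H^+[X^{\pm 1}]=H^++H^+\delta$: write $Y=A+B\delta$, apply $s$ to get $Y^{-1}=A-B\delta$, subtract. For $(\ref{i3})\Rightarrow(\ref{i1})$, the operator to apply is $(1-s)$, not $\e=(1+s)/2$: from $1+Y\in\delta N$ one gets $(1-s)(1+Y)=Y-Y^{-1}$, while $(1-s)(\delta N)=\delta(1+s)N=2\delta\,\e N=\delta H^+$ by the first half of $(\ref{i3})$. (Applying $\e$ itself lands you in $\delta\cdot\tfrac{1-s}{2}N$, the odd part, which you cannot identify without already knowing $(\ref{i1})$.) This is exactly the paper's argument, which applies $(1-s)(s+Y)$ to all of $\e N=H^+$; your observation that evaluating at $1$ suffices is a slight simplification. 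For the forward direction $(\ref{i1})\Rightarrow(\ref{i3})$, the paper first uses $(\ref{i2})$ to rewrite $N=H^++H^+\delta+H^+(Y+1)\delta^{-1}$ and then checks $(s+Y)$ on these three pieces, which is cleaner than handling $X^k$ individually.
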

\begin{proof}
We first prove that (\ref{i1}) is equivalent to (\ref{i2}). If $Y-Y^{-1} \in H^+\delta$ for some $A \in H^+$, then $Y = (Y-Y^{-1})/2 + (Y+Y^{-1})/2 \in H^+\delta + H^+ = H^+[X^{\pm 1}]$. Similarly, if $Y \in H^+[X^{\pm 1}]$, then $Y = A + B\delta$ for some $A,B \in H^+$. Then applying the anti-involution $g \mapsto g^{-1}$, we obtain $Y^{-1} = A - \delta B = A - B \delta$. This implies $Y - Y^{-1} = B\delta$.

Next we show that (\ref{i1}) implies (\ref{i3}). We have that $H^+[X^{\pm 1}] = H^+ + H^+\delta$, which implies 
\[
N = H^+ + H^+\delta + H^+(Y+1)\delta^{-1} + H^+(Y+1) = H^+ + H^+\delta + H^+(Y+1)\delta^{-1}
\]
where the last term of the second expression is contained in $H^+[X^{\pm 1}] \subset N$ by condition (\ref{i2}) (which is implied by condition (\ref{i1})). Since  $s\delta = -\delta$, we see that $\e\delta = \e\delta^{-1} = 0$. Therefore, after multiplying this expression by $\e$ we obtain
\begin{eqnarray*}
\e N &=& H^+ + H^+(\e Y + \e)\delta^{-1}\\
&=& H^+ + H^+(Y + Y^{-1}s)\delta^{-1}\\
&=& H^+ + H^+(Y-Y^{-1})\delta^{-1}\\
&=& H^+
\end{eqnarray*}
where the last equality follows from assumption (\ref{i1}) that $Y-Y^{-1} \in H^+\delta$. Next, we compute
\begin{eqnarray*}
(s+Y) \cdot H^+ &=& (1+Y)\cdot H^+ = \delta\left[(1+Y)H^+\delta^{-1}\right] \subset \delta N \\
(s+Y)\cdot H^+\delta &=& (-1 + Y)H^+\delta \subset H^+[X^{\pm 1}]\delta \subset \delta N \\
(s+Y)H^+(Y+1)\delta^{-1} &= & H^+(Y^2+Y-Y^{-1}-1)\delta^{-1} \\
&=& H^+(Y-Y^{-1})(Y+1)\delta^{-1} \subset H^+\delta(Y+1)\delta^{-1} \subset \delta N
\end{eqnarray*}

Finally, we show that condition (\ref{i3}) implies condition (\ref{i2}). Acting on the assumed containment by $1-s$ we obtain
\[
(1-s)(s+Y)N \subset (1-s)\delta N = \delta(1+s)N = \delta H^+
\]
We then compute
\[
(1-s)(s+Y)\e N = (s+Y-1-Y^{-1}s)\e N = (Y-Y^{-1}) H^+ 
\]
This shows that $(Y-Y^{-1})H^+ \subset \delta H^+$, which completes the proof.
\end{proof}

\begin{corollary}\label{cor_BH_def}
Suppose $K$ is a knot that satisfies the (equivalent) conditions of Proposition \ref{prop_3conditions}. Furthermore, assume that the map $\delta: N \to N$ given by multiplication by $\delta$ 
is injective. Then Conjecture \ref{conj_bs14} (at $q=-1$) holds for $K$.
\end{corollary}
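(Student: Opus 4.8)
The plan is to connect the module $N$ defined in \eqref{eq_defN} with the skein module $\sk_{q=-1}(M)$ and to use the explicit Dunkl embedding of Corollary \ref{cor_symdunkl} (specialized at $\epsilon = -1$) together with Theorem \ref{bhthm} identifying $H^+[\pi] \cong \ochar(M)$. First I would observe that under this identification, multiplication by the meridian $X$ corresponds to the operator $\xx$ in the polynomial representation (with $x_T = X + X^{-1}$), so that $\delta = X - X^{-1}$ corresponds precisely to the denominator appearing in the Dunkl formulas. Thus the formulas of Corollary \ref{cor_symdunkl} define the action of $y_\ult, z_\ult$ as operators on $\ochar(M)[\delta^{-1}] = H^+[\delta^{-1}]$, landing a priori in a localization; the content of Conjecture \ref{conj_bs14} at $q = -1$ is exactly that these operators preserve a lattice isomorphic to $\ochar(M)$ itself, equivalently that $\SH_{-1,\ult}\cdot 1 \subset \ochar(M)$ inside $\ochar(M)^\loc$, as spelled out in the proof of the Lemma following \eqref{dia_qeq1}.

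The key step is to show that, given the Brumfiel--Hilden condition, the submodule $N \subset H[\delta^{-1}]$ is the right lattice: I would argue that $N$ is stable under the action of $x_\ult, y_\ult, z_\ult$. Stability under $x_\ult = \xx + \xx^{-1}$ (i.e.\ multiplication by $X + X^{-1} \in H^+$) is immediate since $N$ is an $H^+$-module. For $y_\ult$ and $z_\ult$, the natural strategy is to rewrite the $q=-1$ Dunkl operators in the $s$-twisted form: on $H^+[X^{\pm 1}]$ the operator $y_\ult$ should be expressible (up to the $H^+$-linear combination governed by $t_1, t_2$) in terms of the longitude action, which is why condition \eqref{i3} of Proposition \ref{prop_3conditions} is phrased via $(s + Y)\cdot N \subset \delta N$. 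Concretely, I expect the action of $y_\ult$ on $N$ to be built from multiplication by $Y + Y^{-1} \in H^+$, multiplication by $Y - Y^{-1}$, and division by $\delta$ — and condition \eqref{i1}, $Y - Y^{-1} \in H^+\delta$, is exactly what makes $(Y-Y^{-1})\delta^{-1}$ act by an element of $H^+$, keeping us inside $N$. So the bulk of the argument is a bookkeeping computation matching the explicit rational expressions in Corollary \ref{cor_symdunkl} (with $\epsilon = -1$) against the $H^+$-module generators $1$ and $(Y+1)\delta^{-1}$ of $N$, using the equivalences of Proposition \ref{prop_3conditions} at each step.

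Once $N$ is known to be a $\SH_{-1,\ult}$-stable lattice, I would invoke the hypothesis that $\delta\colon N \to N$ is injective to descend the action: the isomorphism $\e N = H^+ \cong \ochar(M)$ from \eqref{i3} together with the fact that $N \subset H[\delta^{-1}]$ has no $\delta$-torsion lets me identify $H^+$ with $N/(\text{something})$ or, more precisely, realize $\ochar(M)$ as $\e N$ and check that the $\SH_{-1,\ult}$-action on $N$ restricts compatibly to give a well-defined algebra homomorphism $\SH_{-1,\ult} \to \ochar(M)$ refining the localized map $\alpha_*^\loc$. The injectivity of $\delta$ is what guarantees that the passage between $N$ and its image under $\e$ loses no information, so that an operator preserving $N$ genuinely gives an operator on $\ochar(M)$ and not merely on a quotient. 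Finally, this algebra homomorphism is precisely the deformed peripheral map $(\alpha_t)_*$ whose existence is equivalent to Conjecture \ref{conj_bs14} at $q=-1$, by the Lemma proved after \eqref{dia_qeq1}.

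The main obstacle I anticipate is the second paragraph: correctly matching the symmetric Dunkl formulas of Corollary \ref{cor_symdunkl} — which are written in terms of the torus curve operators $x_T, y_T, z_T$ on the quantum-torus side — with the abstract group-algebra picture, where $y_T, z_T$ correspond to the longitude-ish elements but only after accounting for the full peripheral map $\Z^2 \to \pi$ (the longitude $Y$ and the $(1,1)$-curve $XY$, etc.). In particular one must verify that the image of the $(0,1)$ and $(1,1)$ curves under the composite $\SH_{-1,\ulone} \to \sk_{-1}(T^2) \to \ochar(M)$ really is given by the longitude $Y$ and $XY$, so that the rational corrections in Corollary \ref{cor_symdunkl} become the expressions involving $Y - Y^{-1}$ and $\delta^{-1}$ that Proposition \ref{prop_3conditions} is designed to control. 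This is essentially a compatibility check between the skein-theoretic normalization (Theorem \ref{thm_orep}, with its sign) and the Brumfiel--Hilden normalization (Theorem \ref{bhthm}), but getting all the signs and the $\epsilon = -1$ specialization consistent is where the real care is needed; everything after that is formal.
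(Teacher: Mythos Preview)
Your overall strategy is sound, but you are taking a harder road than the paper and there is a real conceptual slip in the middle.

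The paper's proof is much shorter because it works with the \emph{full} (non-spherical) DAHA $\H_{-1,t_1,t_2,1,1}$ rather than with the spherical generators $x_\ult,y_\ult,z_\ult$. At $q=-1$ and $t_3=t_4=1$ the full DAHA is generated by $X^{\pm 1}$, $s$, and $T_2$; the first two obviously preserve $N$, and under the Dunkl embedding \eqref{ccdunklembedding} the only pole of $T_2$ is a term of the form $\delta^{-1}(s+Y)$. Condition~\eqref{i3} of Proposition~\ref{prop_3conditions} says precisely that $(s+Y)N\subset \delta N$, so together with the injectivity hypothesis on $\delta$ it makes $\delta^{-1}(s+Y)$ a well-defined operator on $N$. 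Hence all of $\H_{-1,t_1,t_2,1,1}$ acts on $N$, and then the spherical subalgebra $\e\H\e = \SH_{-1,t_1,t_2,1,1}$ acts on $\e N = H^+ \cong \ochar(M)$, which is exactly the statement of the conjecture. That is the entire argument; condition~\eqref{i3} was engineered to match the polar part of $T_2$, and the sphericalization is formal.

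By contrast, you try to verify stability directly for the \emph{symmetric} Dunkl operators of Corollary~\ref{cor_symdunkl}. Two issues arise. First, those formulas have denominator $x_T^2-4=\delta^2$, not $\delta$, so the bookkeeping you describe (``built from $Y+Y^{-1}$, $Y-Y^{-1}$, and division by $\delta$'') does not literally match what is there; you would have to unwind the symmetrization to recover the simpler $\delta^{-1}(s+Y)$ structure, which is exactly what working non-spherically does for free. Second, and more seriously, there is a conflation in your argument between two different roles of the longitude: $y_T$ in Corollary~\ref{cor_symdunkl} is the \emph{trace} $Y+Y^{-1}\in H^+$, acting by multiplication on $H^+$, whereas the $Y$ in condition~\eqref{i3} is the \emph{operator} on $N$ coming from $\yy$ in the quantum torus. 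Saying ``$N$ is stable under $x_\ult,y_\ult,z_\ult$'' is not quite right, since the spherical generators naturally act on the symmetric piece $\e N$, not on $N$ itself; the clean way to get an action on $N$ is via the full DAHA, and then restrict.

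So your route is not wrong in spirit, but the paper's approach bypasses exactly the ``main obstacle'' you anticipate: rather than matching the symmetric formulas against the peripheral map and chasing signs, one lifts to the non-spherical level where the polar structure of $T_2$ is visibly governed by condition~\eqref{i3}, and only then applies $\e$.
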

\begin{proof}
The algebra $\H_{q=-1,t_1,t_2,1,1}$ is generated by $X^{\pm 1}$, the operator $T_2$, and the involution $s$. Under the Dunkl embedding \eqref{ccdunklembedding}, the poles of the image of the element $T_2$ are of the form $\delta^{-1}(s+Y)$. Therefore,  third condition in Proposition \ref{prop_3conditions} together with the injectivity assumption implies the operator $T_2$ acts on the module $N$. Then the spherical subalgebra $\e \H_{-1,t_1,t_2,1,1}\e$ acts on $\e N$, which is equal to $\H^+$ (again by the third condition in Proposition \ref{prop_3conditions}). This confirms Conjecture \ref{conj_bs14} at $q=-1$.
\end{proof}

\section{Relations between different knots}\label{sec_connectsum}
In this section, we describe implications for the Brumfiel-Hilden condition \eqref{bhcond} between different knots. We first recall that the \emph{connect sum} $K \# K'$ of two knots is defined by attaching two points, one on each knot, and then resolving the double point to obtain one knot. (See, e.g. \cite{BZ03}.)

\begin{lemma}\label{lemma_connectsum}
If knots $K$ and $K'$ satisfy (\ref{bhcond}), then the connect sum $K \# K'$ does too.
\end{lemma}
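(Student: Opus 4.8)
The key structural fact I would use is the Seifert--van Kampen description of the knot group of a connect sum together with how the meridian and longitude sit inside it. Write $\pi = \pi_1(S^3\setminus K)$, $\pi' = \pi_1(S^3\setminus K')$, and let $X, Y$ (resp. $X', Y'$) be the standard meridian and longitude. Then $\pi_1(S^3\setminus(K\# K'))$ is the amalgamated free product $\pi *_{\Z} \pi'$, where the $\Z$ is generated by the meridian, which is identified in both factors; that is, $X = X'$ in the amalgam. Crucially, the longitude of $K\# K'$ is the product $YY'$ of the two longitudes (each longitude lies in the respective factor, and both commute with the common meridian $X$). So the goal is to show $YY' \in H^+[X^{\pm 1}]$ for $\pi *_\Z \pi'$, given that $Y \in H^+[\pi][X^{\pm 1}]$ and $Y' \in H^+[\pi'][(X')^{\pm 1}]$ inside the respective factors.

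\textbf{Main steps.} First I would record that the functor $\pi \mapsto H[\pi]$ and the subalgebra $H^+[\pi]$ behave well with respect to the maps $\pi, \pi' \to \pi *_\Z \pi'$: the inclusions of groups induce algebra maps $H[\pi] \to H[\pi *_\Z \pi']$ and $H[\pi'] \to H[\pi *_\Z \pi']$ carrying $H^+[\pi]$, $H^+[\pi']$ into $H^+[\pi *_\Z \pi']$ and carrying $X, X'$ to the common element $X$. This is immediate from the presentation \eqref{bhdef}, since relators map to relators. Second, I would use Proposition \ref{prop_3conditions}, specifically the equivalence of (\ref{i1}) and (\ref{i2}): the hypothesis for $K$ gives $Y - Y^{-1} = A\delta$ for some $A \in H^+[\pi]$, and the hypothesis for $K'$ gives $Y' - (Y')^{-1} = A'\delta$ for some $A' \in H^+[\pi']$, where $\delta = X - X^{-1}$ is the \emph{same} element for both (this is the point of the amalgamation). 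Equivalently $Y = B + A\delta$ and $Y' = B' + A'\delta$ with $B = (Y+Y^{-1})/2 \in H^+[\pi]$, $B' = (Y'+(Y')^{-1})/2 \in H^+[\pi']$, all viewed now inside $H[\pi *_\Z \pi']$. Third, I multiply: $YY' = (B+A\delta)(B'+A'\delta)$. Expanding gives $BB' + BA'\delta + A\delta B' + A A'\delta^2$. Now $BB' \in H^+[\pi *_\Z \pi']$ (product of two elements of $H^+$ of the amalgam — note $H^+$ is a subalgebra), $\delta^2 = X^2 - 2 + X^{-2} \in H^+[X^{\pm 1}]$, and the two cross terms each have a single $\delta$. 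To handle $A\delta B'$ I would commute $\delta$ past $B'$: since $B' \in H^+[\pi'] \subseteq H^+[\pi *_\Z \pi']$ and $\delta = X - X^{-1}$ with $X$ the common meridian, the relation in $H[\pi]$ that $X + X^{-1}$ is central shows that $\delta$ anticommutes with $H^+$ modulo the center... more carefully, $s(\delta) = -\delta$ and $s$ fixes $H^+$, so $\delta h = s(h)^{-1}\cdot(\ldots)$ — actually the clean statement, already used in the proof of Proposition \ref{prop_3conditions}, is that for $h \in H^+$ we have $h\delta = \delta h$ fails in general but $\delta h \in H^+[X^{\pm1}]$ whenever $h \in H^+[X^{\pm 1}]$; since we only need membership in $H^+[X^{\pm 1}]$, it suffices that $A\delta B', BA'\delta \in H^+[X^{\pm 1}]$, which follows because $H^+[X^{\pm 1}] = H^+ + H^+\delta$ is closed under right and left multiplication by $\delta$ (as $\delta^2 \in H^+$) and contains $A, A', B, B'$. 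Assembling, $YY' \in H^+[X^{\pm 1}]$, which is condition (\ref{i2}) for $K \# K'$.

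\textbf{Expected obstacle.} The genuine content — and the step I would treat most carefully — is justifying that the meridian--longitude data of $K \# K'$ really is $(X, YY')$ inside the amalgam $\pi *_\Z \pi'$, and that the natural maps $H[\pi], H[\pi'] \to H[\pi *_\Z \pi']$ are compatible with all the structure (the subalgebras $H^+$, the elements $X$, the anti-involution $\sigma$). The algebraic manipulation with $\delta$ is routine once one notes $H^+[X^{\pm 1}] = H^+ + H^+\delta$ and that this is a left- and right-$\delta$-stable $H^+$-bimodule; the topological identification of the peripheral system of a connect sum is standard (e.g. \cite{BZ03}) but deserves an explicit sentence. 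I do not anticipate needing the injectivity hypothesis of Corollary \ref{cor_BH_def} here, since Lemma \ref{lemma_connectsum} is only about the Brumfiel--Hilden condition \eqref{bhcond} itself.
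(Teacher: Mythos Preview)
Your proposal is correct and follows the same route as the paper: the van Kampen description $\pi_1(S^3\setminus(K\#K')) \cong \pi *_\Z \pi'$ with the two meridians identified, the peripheral system $(X,\,YY')$ of the connect sum, and functoriality of $H$ and $H^+$ to transport $Y, Y'$ into the amalgam.

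Where you make life harder than necessary is the last algebraic step. The paper simply observes that $H^+[X^{\pm 1}]$ is, by definition, a \emph{subalgebra} of $H[\pi *_\Z \pi']$; hence once $Y$ and $Y'$ are both in it, so is their product $YY'$. There is no need to pass through condition~(\ref{i1}) of Proposition~\ref{prop_3conditions}, decompose $Y = B + A\delta$, $Y' = B' + A'\delta$, and multiply out. Your hesitation ``$h\delta = \delta h$ fails in general'' is also misplaced: by the very defining relations \eqref{bhdef} of $H[\pi]$, every $g + g^{-1}$ is central, and since $H^+$ is generated by such elements (see \eqref{bhdef}), $H^+$ lies in the center of $H$. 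So $\delta h = h\delta$ for all $h \in H^+$, and the cross terms in your expansion commute without further argument.
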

\begin{proof}
Let $\pi, \pi'$ be the knot groups of $K,K'$, respectively, with peripheral systems $m,l,m',l'$, respectively. Then \cite[Prop. 7.10]{BZ03} says that $\pi_1(K\# K') = \pi \ast_\Z \pi'$, where the generator of $\Z$ maps to $m$ and $m'$ inside $\pi$ and $\pi'$, respectively. In particular, the images of $m$ and $m'$ inside of $\pi_1(K \# K)$ are equal. Next, a peripheral system of $K\# K'$ is given by $(m,ll')$ (or by $(m',ll')$), where we have abused notation by writing $l$ and $l'$ for their images inside $\pi_1(K\# K')$. This is true because one can write the longitude of a knot $K$ in terms of the Wirtinger generators by ``tracing along the knot and recording crossings,'' and in the connect sum, one can first trace along $K$ and then along $K'$. By assumption, we have $Y_K \in H_n^+[K][X_K^{\pm 1}]$ and $Y_{K'} \in H_n^+[K'][X_{K'}^{\pm 1}]$, and combining these statements with the presentation of $\pi_1[K \# K']$ shows that $Y_K,\, Y_{K'} \in H^+[K \# K'][X_{K \# K'}^{\pm 1}]$. Finally, since $Y_{K\#K'} = Y_KY_{K'}$, this shows that $Y_{K\# K'} \in H_n^+[K\# K'][X_{K\# K'}^{\pm 1}]$, which is what we wanted.
\end{proof}

We will write $K' \geq_p K$ if there is a surjection $f: \pi_1(K') \twoheadrightarrow \pi_1(K)$ which preserves the peripherial systems: in other words, $f(X_{K'}) = X_{K}$ and $f(Y_{K'}) = Y_{K}^d$ for some $d \in \Z$. (Such surjections are not common, but they will be useful for our purposes for torus knots.)

\begin{lemma}\label{lemma_covering}
Suppose for a knot $K$ there exist knots $K_i \subset S^3$ satisfying (\ref{bhcond}) and that $K_i \geq_p K$ for each $i$, with $Y_i \mapsto Y_K^{d_i}$. Further suppose that the integers $d_i \in \Z$ generate $\Z$ as a group. Then $K$ satisfies (\ref{bhcond}).
\end{lemma}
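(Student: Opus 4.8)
\textbf{Proof plan for Lemma \ref{lemma_covering}.}

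The plan is to transfer the Brumfiel-Hilden containment along the surjections $f_i: \pi_1(K_i) \twoheadrightarrow \pi_1(K)$ and then combine the resulting containments using the hypothesis that the $d_i$ generate $\Z$. First I would observe that a surjection of groups $f: \pi \twoheadrightarrow \pi''$ induces a surjective algebra homomorphism $\C[\pi] \to \C[\pi'']$, and since $f$ respects inverses it sends the defining relations of $H[\pi]$ into those of $H[\pi'']$; hence $f$ descends to a surjective algebra map $H[f]: H[\pi] \to H[\pi'']$ which commutes with the anti-involution $\sigma$ (as $\sigma$ is defined on group elements by $g \mapsto g^{-1}$, and $f(g^{-1}) = f(g)^{-1}$). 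Consequently $H[f]$ carries $H^+[\pi]$ onto $H^+[\pi'']$, and it carries the subalgebra $H^+[\pi][X_{\pi}^{\pm 1}]$ onto $H^+[\pi''][X_{\pi''}^{\pm 1}]$ because $H[f](X_{\pi}) = X_{\pi''}$ by the definition of $\geq_p$.

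Applying this to each $f_i$ and using the assumption that $K_i$ satisfies \eqref{bhcond}, i.e. $Y_i \in H^+[\pi_1(K_i)][X_i^{\pm 1}]$, I conclude that
\[
Y_K^{d_i} = H[f_i](Y_i) \in H^+[\pi_1(K)][X_K^{\pm 1}]
\]
for every $i$. So it remains to deduce $Y_K \in H^+[\pi_1(K)][X_K^{\pm 1}]$ from the fact that $Y_K^{d_i}$ lies in this subalgebra for a family of exponents generating $\Z$. Here I would use that $X_K^{\pm 1}$ and $Y_K^{\pm 1}$ commute in $H[\pi_1(K)]$ (the meridian and longitude commute in $\pi_1(K)$ since they both lie in the peripheral $\Z^2$, and this commutation survives in $H[\pi_1(K)]$), so the subalgebra $H^+[\pi_1(K)][X_K^{\pm 1}]$ is closed under multiplication and its intersection with $\C[Y_K^{\pm 1}]$-type elements behaves well. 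Since the $d_i$ generate $\Z$, there exist integers $n_i$ with $\sum_i n_i d_i = 1$, hence $Y_K = \prod_i (Y_K^{d_i})^{n_i}$; but negative powers are a problem, so instead I would pick two exponents (or a Bézout pair) $d, d'$ among the $d_i$ with $\gcd(d,d') = 1$, take $a,b$ with $ad + bd' = 1$, and write $Y_K = Y_K^{ad + bd'} = (Y_K^d)^a (Y_K^{d'})^b$; if one of $a,b$ is negative, use that $Y_K^{d}$ and $Y_K^{d'}$ are units in $H[\pi_1(K)]$ with inverses $\sigma(Y_K^d) = Y_K^{-d}$, and $Y_K^{-d} \in H^+[\pi_1(K)][X_K^{\pm 1}]$ as well since this subalgebra is $\sigma$-stable (being generated by $\sigma$-fixed elements together with $X_K^{\pm 1}$, and $\sigma(X_K) = X_K^{-1}$). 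Therefore all integral powers $Y_K^{n}$ lie in the subalgebra, and in particular $Y_K = Y_K^1$ does.

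The main obstacle I anticipate is the last step: making precise that $H^+[\pi_1(K)][X_K^{\pm 1}]$ is a $\sigma$-stable subalgebra containing all powers $Y_K^{d_i}$ and hence, via a Bézout/units argument, the element $Y_K$ itself. This requires care because $H[\pi_1(K)]$ is noncommutative in general, so I must genuinely invoke the commutativity of the peripheral elements $X_K, Y_K$ (equivalently the relation $ML = LM$ in the knot group) to know that products and inverses of the $Y_K^{d_i}$ stay inside a well-understood commutative subalgebra, rather than merely inside the full noncommutative $H[\pi_1(K)]$. Once that structural point is pinned down, the exponent bookkeeping via $\sum n_i d_i = 1$ is routine, and the functoriality of $H[-]$ in the first two paragraphs is formal.
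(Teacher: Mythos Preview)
Your approach is essentially identical to the paper's: use functoriality of $H[-]$ and $H^+[-]$ under group surjections to push the containment $Y_i \in H^+[K_i][X_i^{\pm 1}]$ down to $Y_K^{d_i} \in H^+[K][X_K^{\pm 1}]$, and then combine via a B\'ezout relation $\sum a_i d_i = 1$ to get $Y_K$ itself. The paper's proof is terser and simply writes $Y_K = Y_K^{\sum a_i d_i} \in H^+[X_K^{\pm 1}]$ without further comment; you correctly identified that negative $a_i$ require knowing $Y_K^{-d_i}$ is also in the subalgebra, and your $\sigma$-stability argument (the subalgebra is generated by $\sigma$-fixed elements together with $X_K^{\pm 1}$, and $\sigma(X_K)=X_K^{-1}$) is the right way to see this.

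One simplification: your anticipated obstacle about noncommutativity is not actually an issue. By the defining relations of $H[\pi]$ the elements $g+g^{-1}$ are \emph{central}, so $H^+[\pi]$ is central in $H[\pi]$ and hence $H^+[\pi][X_K^{\pm 1}]$ is already a commutative subalgebra. Thus you do not need to invoke commutativity of $X_K$ and $Y_K$ at all; the product $\prod_i (Y_K^{d_i})^{a_i}$ is unambiguous and lies in the subalgebra simply because the subalgebra is closed under multiplication.
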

\begin{proof}
The definitions of $H$ and $H^+$ in (\ref{bhdef}) are functorial. In particular a surjection $f: \pi_1(K_i) \twoheadrightarrow \pi_1(K)$ induces a surjection $f_*:H[K_i] \twoheadrightarrow H[K]$ with $f_*(H^+[K_i]) = H^+[K]$. Since $f$ preserves the peripheral systems and we have assumed $Y_{i} \in H^+[K_i][X_i^{\pm 1}]$, this shows that $Y_{K}^{d_i} \in H^+[K][X^{\pm 1}_{K}]$. By assumption, there exist $a_i \in \Z$ such that $\sum_i a_i d_i = 1$. This implies that $Y_K^{\sum a_i d_i} = Y_K \in H^+[X_K^{\pm 1}]$, which completes the proof.
\end{proof}

\section{Torus knots}\label{sec_torus}
In this section, we will prove Conjecture \ref{bhconj} for torus knots. 
To this end, we will use the presentation of the BH algebra for two-generator groups
given in the Appendix.
Let $r$ and $s$ be coprime integers such that $\, 2 \leq r < s \,$. The knot group
of the $(r,s)$-torus knot $ K = K(r,s) $ has the following presentation
$$
\pi(K) = \langle u,v\mid u^r = v^s \rangle\ ,
$$
and the meridian and longitude are represented by the elements (cf. \cite[Prop. 3.28]{BZ03}):
\begin{equation}
\la{torus}
m = u^n v^{-k}\ ,\quad l = v^s m^{-rs}
\end{equation}
where $k$ and $n$ are integers satisfying 
\begin{equation}
\la{nk}
- r k + s n = 1\ .
\end{equation}
We remark that $m$ is independent of the choice of solution $(k,n)$ to the equation \eqref{nk}.

Now, let $ H = H[\pi] $ be the Brumfiel-Hilden algebra of the knot group $\pi(K)$.
As in Section~\ref{BHC}, let $X$ and $Y$ denote the images of the elements 
$ m $ and $ l $ in $ H $ under the canonical projection $ \c[\pi] \onto H[\pi] $, and let
$ H^{+}[X^{\pm 1}] $ be the subalgebra of $H$ generated by $ H^+ $ and
$ \c[X^{\pm 1}] $.  Recall that Conjecture~\ref{bhconj}  is equivalent to the
statement
\begin{equation}
\la{BHconj}
Y \in H^{+}[X^{\pm 1}]\ .
\end{equation}
The main result of this section is 
\begin{theorem}
\la{Ttorus}
Condition \eqref{BHconj} holds for all torus knots.
\end{theorem}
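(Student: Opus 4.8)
The plan is to verify condition \eqref{BHconj} directly using the explicit presentation $\pi(K) = \langle u, v \mid u^r = v^s\rangle$ together with the formulas \eqref{torus} for the meridian $m = u^n v^{-k}$ and longitude $l = v^s m^{-rs}$. Since $m^{-rs}$ is a power of $X$ (hence lies in $\C[X^{\pm 1}] \subseteq H^+[X^{\pm 1}]$, as $X + X^{-1}$ and $X^{-1}$ generate all of $\C[X^{\pm 1}]$ after noting $X^{\pm 1}$ themselves are in the generating set), it suffices to show that the image of $v^s$ lies in $H^+[X^{\pm 1}]$. Because $u^r = v^s$ is central in $\pi(K)$ — it generates the center — its image $c := \overline{u^r} = \overline{v^s}$ in $H[\pi]$ is a \emph{central} element. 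The key observation I would exploit is that central group elements $c$ with $c = c^{-1}$-type symmetry behave well: more precisely, I expect $c + c^{-1} \in H^+$ by definition, and I need to upgrade this to $c$ itself lying in $H^+[X^{\pm 1}]$, which is where the power of $X$ must be used.

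The main step is therefore to express $v^s = u^r$ in terms of $X = \overline{m}$ and elements of $H^+$. I would first record the relation between $u^r$, $v^s$, and $m$: from $m = u^n v^{-k}$ and the fact that $u^r = v^s$ commutes with everything, one computes $m^s = u^{ns} v^{-ks}$, and using $sn - rk = 1$ one can manipulate $u^{ns} = u^{rk+1} = (u^r)^k u$ and similarly organize the $v$ powers, so that $m^{rs}$ or a related power expresses $u^r = v^s$ as a product of a power of $m$ and a "short" word in $u, v$. Actually the cleaner route: since $l = v^s m^{-rs}$, we have $v^s = l \cdot m^{rs}$, so $Y = \overline{v^s}\cdot X^{rs}$ in $H$ (using centrality of $\overline{v^s}$ to move it past $X^{rs}$), hence $Y \in H^+[X^{\pm 1}]$ \emph{if and only if} $\overline{v^s} \in H^+[X^{\pm 1}]$. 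So everything reduces to showing the central element $c = \overline{v^s} = \overline{u^r}$ lies in $H^+[X^{\pm 1}]$.

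To handle $c$, I would use the two-generator presentation of the Brumfiel-Hilden algebra developed in the Appendix (as the theorem statement explicitly signals), which should give normal forms or a concrete model for $H[\langle u,v\mid u^r = v^s\rangle]$ in which one can see that $c$, being central and a power of a single generator, is a polynomial in $X + X^{-1}$ and a trace-type element together with $X^{\pm 1}$ — equivalently, I would show $c - c^{-1} \in H^+ \delta$ where $\delta = X - X^{-1}$, invoking the equivalence with condition (\ref{i1}) of Proposition \ref{prop_3conditions}. Concretely, $c - c^{-1} = \overline{v^s} - \overline{v^{-s}}$, and I would try to rewrite $v^s - v^{-s}$, using the relation and the meridian formula, as (something in $H^+$) times $(m - m^{-1})$; the Chebyshev-type identities relating $v^s - v^{-s}$ to $v - v^{-1}$ and traces of powers of $v$ should make this mechanical once the central element is pinned down in terms of $X$.

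The hard part will be the last step: extracting enough structure from the Appendix's presentation of the two-generator BH algebra to actually certify that the central element $c$ — or equivalently $c - c^{-1}$ — lands in $H^+[X^{\pm 1}]$ (resp. $H^+\delta$). The algebraic subtlety is precisely the one flagged in the introduction, namely that $X^{\pm 1}$ genuinely enlarges $H^+$ and one cannot avoid using it; the relation $u^r = v^s$ must be leveraged to trade the "non-symmetric" part of $v^s$ for a power of the meridian. I would expect the computation to split into cases according to the parity/residues of $n, k$ modulo $r, s$, but the conceptual content — central element equals power of meridian times symmetric element — should be uniform, and the coprimality $\gcd(r,s)=1$ is what makes the bookkeeping close up.
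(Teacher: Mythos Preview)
Your reduction is exactly right and matches the paper: since $m^{-rs}$ is a power of $X$, one must show $\overline{v^s} \in H^+[X^{\pm 1}]$, and the Appendix's presentation of the BH algebra of a two-generator group is indeed the tool. You also correctly see that this amounts to an ideal-membership statement involving Chebyshev polynomials (in the paper's notation, showing that $U_{s-1}(y)$ lies in the ideal $J_1 \subset R$).

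The gap is in the ``hard part'': the hope that centrality of $c = \overline{v^s}$ plus $\gcd(r,s)=1$ will make the verification mechanical or uniform does not pan out, and the paper does \emph{not} find a direct argument for general $(r,s)$. Centrality of $c$ in $H[\pi]$ only tells you that the ``$t$--component'' of $c$ is killed by multiplication by $\delta = X - X^{-1}$ and by $z$ in the quotient ring $R/J$, which is far from forcing it to lie in $J$. Instead the paper uses a two-step strategy you do not mention: (i) construct, for each $(r,s)$, explicit group epimorphisms $\pi(K(p,p+1)) \twoheadrightarrow \pi(K(r,s))$ (resp.\ from $K(p,2p+1)$ when $rs$ is odd) sending meridian to meridian and longitude to a power $l^{N(t)}$ of the longitude, then show by an elementary number-theoretic argument that the various exponents $N(t)$ generate $\Z$, so Lemma~\ref{lemma_covering} reduces everything to the families $K(p,p+1)$ and $K(p,2p+1)$; (ii) for those families, carry out a rather delicate ideal-membership computation in $R = k[X^{\pm1},y,z]$ using specific Chebyshev identities (e.g.\ $U_{p-1}U_{p+1} = U_p^2 - 1$ and $\gcd(U_p - U_{p-1}, U_{p-2} - U_{p-3}) = 1$), with separate arguments for $p$ even and $p$ odd. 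Neither of these ingredients --- the covering reduction or the particular Chebyshev gcd arguments --- appears in your plan, and without them the ``bookkeeping'' does not close up.
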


We will prove Theorem~\ref{Ttorus} in several steps. First, we verify 
\eqref{BHconj} for $(p,p+1)$-torus knots by direct calculation. Then, given
a torus knot $K(r,s)$ with $rs$ even, we construct a group epimorphism $ \pi[K(p,p+1)] 
\onto \pi[K(r,s)] $ and we use Lemma \ref{lemma_covering} to give a covering argument to show that \eqref{BHconj} holds for $ K(r,s) $, provided it holds for $ K(p,p+1)$ for all $p$. Next, we show that \eqref{BHconj} holds for $(2,2p+1)$ torus knots and use a similar covering argument to show the same for $K(r,s)$ with $rs$ odd.

In the computations below we will use the classical Chebyshev polynomials of the first
and second kind. We recall that these polynomials are defined respectively by 
\begin{eqnarray*}
 T_1(y) &=& 1,\quad \quad T_3(y) = y,\quad\quad \,\,\,\,T_{n+1} = 2yT_n - T_{n-1}\\
 U_0(y) &=& 1,\quad \quad U_1(y) = 2y,\quad \quad U_{n+1} = 2yU_n - U_{n-1}\ .
\end{eqnarray*}
Alternative definitions are given by
\begin{equation}
\la{chebtr}
T_n(\cos(\vartheta)) := \cos(n \vartheta)\ ,\quad
U_n(\cos(\vartheta)) := \frac{\sin((n+1)\vartheta)}{\sin(\vartheta)}\ ,
\quad n = 0,\,1,\,2,\,\ldots
\end{equation}

\subsection{Torus knots of type $(p,p+1)$}
This section is devoted to the proof of of the following proposition.
\begin{proposition}\label{prop_ppp1}
Condition \eqref{BHconj} holds for $(p,p+1)$ torus knots.
\end{proposition}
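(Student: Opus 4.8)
The plan is to work inside the Brumfiel-Hilden algebra $H = H[\pi(K)]$ of the $(p,p+1)$-torus knot, where $\pi(K) = \langle u,v \mid u^p = v^{p+1}\rangle$, and to exhibit the longitude class $Y$ explicitly as an element of $H^+[X^{\pm 1}]$. Since $2 \le p < p+1$ are automatically coprime, equation \eqref{nk} reads $-pk + (p+1)n = 1$, which has the solution $(k,n) = (1,1)$; hence by \eqref{torus} the meridian is $m = uv^{-1}$ and the longitude is $l = v^{p+1}m^{-p(p+1)}$. So $X = \overline{uv^{-1}}$ and $Y = \overline{v^{p+1}}\cdot X^{-p(p+1)}$ in $H$. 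Because $X^{\pm 1} \in H^+[X^{\pm 1}]$ trivially, it suffices to show that the central-ish element $\overline{v^{p+1}} = \overline{u^p}$ lies in $H^+[X^{\pm 1}]$.

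First I would set up coordinates on $H$ using the Appendix's presentation of the Brumfiel-Hilden algebra for $2$-generator groups: write everything in terms of the trace-like generators $\overline{u+u^{-1}}$, $\overline{v+v^{-1}}$, $\overline{uv + (uv)^{-1}}$ (equivalently $\overline{uv^{-1}} + \overline{(uv^{-1})^{-1}} = X + X^{-1}$), which all lie in $H^+$, together with the noncommutative generator $X$ (or $v$). The key algebraic fact is the Cayley-Hamilton-type identity: for any group element $g$, in $H$ one has $g^{n+1} = U_n\!\left(\tfrac{g+g^{-1}}{2}\right) g - U_{n-1}\!\left(\tfrac{g+g^{-1}}{2}\right)$, i.e. $g^n$ is a $\C$-linear combination of $g$ and $1$ with coefficients that are polynomials in the $H^+$-element $g + g^{-1}$, via Chebyshev polynomials. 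Applying this with $g = u$ gives $\overline{u^p} = U_{p-1}(\tfrac{u+u^{-1}}{2})\,\overline{u} - U_{p-2}(\tfrac{u+u^{-1}}{2})$, and similarly with $g = v$ for $\overline{v^{p+1}}$. The coefficients $U_{p-1}(\tfrac{u+u^{-1}}{2})$, etc., are already in $H^+$, so the problem reduces to showing that $\overline{u}$ and $\overline{v}$ themselves — modulo $H^+$ — can be generated by $X$ over $H^+$, i.e. that $\overline{u}, \overline{v} \in H^+[X^{\pm 1}]$, OR that the particular combination $\overline{u^p}$ telescopes even though $\overline{u}$ individually does not.

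The cleanest route: use the relation $m = uv^{-1}$ to write $u = mv = Xv$ (at the group-algebra level, then project), so that $\overline{u^p}$ and $\overline{v^{p+1}}$ become words in $X$ and $v$. Using $u^p = v^{p+1}$ one gets $(Xv)^p = v^{p+1}$, which after the BH relations (recall $h$ commutes with $g + g^{-1}$) should let me solve for a symmetric expression. Concretely I expect to massage $v^{p+1} = (Xv)^p$ using the Chebyshev identity on both sides, collect the terms odd in $v$ versus even in $v$, and deduce that $Y = \overline{v^{p+1}} X^{-p(p+1)}$ equals an expression of the form $A + BX$ with $A, B \in H^+$; by Proposition \ref{prop_3conditions}(2)$\Leftrightarrow$(1) this is exactly condition \eqref{BHconj}. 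Equivalently I will verify $Y - Y^{-1} \in H^+\delta$ with $\delta = X - X^{-1}$: apply the anti-involution $\sigma$ and show the ``odd part'' of $\overline{v^{p+1}}$ is divisible by $\delta$ in $H^+[X^{\pm 1}]$.

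The main obstacle I anticipate is precisely the last divisibility/telescoping step: showing that the anti-symmetric part of $\overline{v^{p+1}}$ (or of $\overline{u^p}$) under $\sigma$, which a priori is a messy Chebyshev combination involving $v$, actually lies in $H^+\delta$ rather than merely in $H^+ + H^+ v$. This requires exploiting the torus-knot relation $u^p = v^{p+1}$ in an essential way — the statement is false for a free $2$-generator group — so I would look for an identity of Chebyshev polynomials, likely $U_{p}(\xi)\,\eta - U_{p-1}(\xi) = U_{p-1}(\xi')\,\eta' - U_{p-2}(\xi')$ type, that forces cancellation when combined with $(k,n)=(1,1)$ and the power $p(p+1)$ in the longitude. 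Once the two sides are written via \eqref{chebtr} with $u = e^{i\theta}$, $v = e^{i\phi}$, $p\theta = (p+1)\phi$, the needed identity becomes a trigonometric one and should be checkable by a short induction on $p$, or by a direct computation in the quotient ring $\C[\pi]/(u^p - v^{p+1})$ localized appropriately. I would finish by double-checking the $p=2$ and $p=3$ cases by hand (trefoil and $(3,4)$-knot) to make sure the signs and the exponent $p(p+1)$ are right.
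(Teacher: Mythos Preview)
Your setup is correct and matches the paper exactly: change generators to $a = uv^{-1} = m$ and $b = v$, so the relation becomes $(ab)^p b^{-1} = b^p$; the longitude is $b^{p+1}X^{-p(p+1)}$, and the problem reduces to showing $b^{p+1} \in H^+[X^{\pm 1}]$. Writing $b^{p+1} = T_{p+1}(y) + U_p(y)\,t$ in the $R \oplus Rt$ presentation (with $t = (b-b^{-1})/2$), this is equivalent to showing that the single polynomial $U_p(y)$ lies in the ideal $J_1 \subset R = \C[X^{\pm 1},y,z]$ that controls membership in $H^+[X^{\pm 1}]$.

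The gap is in your ``main obstacle'' paragraph. You expect the divisibility to reduce to a trigonometric identity checkable by a short induction on $p$, but this is not what happens. The condition $U_p(y) \in J_1$ is an ideal-membership question in a polynomial ring in \emph{three} variables $X, y, z$, where
\[
J_1 = \langle\, U_{p-1}(Q)X - U_p(y),\ U_{p-1}(Q)X^{-1} - U_p(y),\ U_{p-2}(Q) - U_{p-1}(y),\ QU_{p-1}(Q) - yU_p(y)\,\rangle,
\qquad Q = xy + z,
\]
and no single Chebyshev identity or induction in $p$ suffices. The paper's argument is quite delicate: one first proves $U_{p-1}U_{p+1} = U_p^2 - 1$ and, crucially, $\gcd(U_p - U_{p-1},\,U_{p-2} - U_{p-3}) = 1$; then works modulo $J_1$ to derive relations such as $[U_{p-1}(y) - U_p(y)]N \equiv 0$ and $[U_{p-3}(y) - U_{p-2}(y)]N \equiv 0$ (for $p$ even), whence the gcd forces $N := U_p(y) \equiv 0$. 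The cases $p$ even and $p$ odd are treated separately, with the odd case requiring the analogous identities in the variable $Q$ rather than $y$. None of this is visible from the trigonometric substitution $u = e^{i\theta}$, $v = e^{i\phi}$ you propose, because that substitution only sees the abelianization and loses the ideal structure in $R$.

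So: the architecture of your plan is right up through the reduction to $U_p(y) \in J_1$, but the final step is not a short induction; it is a hands-on ideal computation driven by coprimality of consecutive-difference Chebyshev polynomials, with a parity split.
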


Fix an integer $p \geq 2$ and consider the torus knot $ K(p,p+1)$. If $r=p $ and $ s = p+1 $,
we can take $ k = n = 1 $ in \eqref{nk}, so that the peripheral elements \eqref{torus} are given by 
\begin{equation}\label{eq_mlpp1}
m = uv^{-1}\ ,\quad l = v^{p+1}m^{-p(p+1)}
\end{equation}
To do calculations it is convenient to change generators of the knot group taking
$\,a := uv^{-1}\,$ and $\,b=v\,$. Then
\begin{equation}\label{eq_grouprelation}
\pi(K(p,p+1)) = \langle u,v\mid u^p = v^{p+1} \rangle = 
\langle a,b \mid abab \ldots aba = b^p \rangle\ ,
\end{equation}
where there are $(p-1)$ copies of $b$ on the left-hand side. The peripheral pair becomes
\[
m = a\ ,\quad l = b^{p+1}a^{-p(p+1)}\ .
\]

Next, recall the presentation of the BH algebra of the free group 
$ F_2 = \langle a,b \rangle $ given in the Appendix:
$$
H[F_2] \cong R \oplus Rt\ ,\quad t^2 = y^2-1
$$
where $ R = \c[X^{\pm 1}, y, z]$, and where $X = a$, $y = (b+b^{-1})/2$, $2z = ab + b^{-1}a^{-1} + (a+a^{-1})y$, and $t = (b-b^{-1})/2$. We will repeatedly use the following simple observation.
\begin{lemma}
\label{L1}
For any word $ c \in F_2 $, we have 
$$
c^{n+1} = U_n(c^+)\,c - U_{n-1}(c^+)\ ,\quad n =1,\,2,\,\ldots
$$
where $ c^{+} := (c+c^{-1})/2 $.
\end{lemma}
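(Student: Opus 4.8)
The plan is a straightforward induction on $n$, whose only computational input is that in the group algebra $\c[F_2]$ the element $c$ satisfies a quadratic relation over the subring generated by $c^{+}$. Indeed, multiplying the identity $c + c^{-1} = 2c^{+}$ on the right by $c$ and using $c\cdot c^{-1} = 1$ gives
\[
c^{2} = 2c^{+}c - 1 ,
\]
and $c^{+}$ commutes with $c$, both being polynomials in $c$ and $c^{-1}$. For the base case $n = 1$ one checks directly that $U_{1}(c^{+})\,c - U_{0}(c^{+}) = 2c^{+}c - 1 = c^{2}$; equivalently, one may start the induction at $n = 0$ with the convention $U_{-1} = 0$.

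For the inductive step, suppose the formula holds with $n$ replaced by $n-1$ and by $n$. Multiplying $c^{n+1} = U_{n}(c^{+})\,c - U_{n-1}(c^{+})$ on the left by $c$ and substituting $c^{2} = 2c^{+}c - 1$ yields
\[
c^{n+2} = U_{n}(c^{+})\,c^{2} - U_{n-1}(c^{+})\,c = \bigl(2c^{+}U_{n}(c^{+}) - U_{n-1}(c^{+})\bigr)\,c \;-\; U_{n}(c^{+}) .
\]
By the recurrence $U_{n+1} = 2yU_{n} - U_{n-1}$ defining the Chebyshev polynomials of the second kind, the coefficient of $c$ is precisely $U_{n+1}(c^{+})$, so $c^{n+2} = U_{n+1}(c^{+})\,c - U_{n}(c^{+})$, completing the induction.

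I expect no genuine obstacle: the lemma is just the classical three-term recurrence for $U_n$, transported to $\c[F_2]$ via the quadratic relation $c^{2} - 2c^{+}c + 1 = 0$. (One could alternatively prove it by the substitution $c \mapsto e^{i\vartheta}$, $c^{+}\mapsto \cos\vartheta$ together with the trigonometric identities \eqref{chebtr}, but the direct induction above is cleaner and avoids the analytic detour.)
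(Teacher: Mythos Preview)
Your proof is correct and follows exactly the same approach as the paper: verify the base case $c^{2}=2c^{+}c-1=U_{1}(c^{+})c-U_{0}(c^{+})$ and then induct using the Chebyshev recurrence. The paper's proof merely states the base case and asserts that ``for all $n\ge 1$, the claim follows easily by induction in $n$'', so your version is in fact more detailed than theirs.
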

\begin{proof}
For $n=1$, we have
$\,c^2 = 2 [(c+c^{-1})/2]\,c - 1 = 2 c^+ c - 1 = U_1(c^+)c - U_0(c^+)\,$,
and for all $ n\ge 1 $, the claim follows easily by induction in $ n $. 
\end{proof}
\begin{remark}
Note that the identity of Lemma~\ref{L1} holds actually in the group algebra
$ \c[F_2] $ but we will use it as an identity in $ H[F_2] $.
\end{remark}

Using Lemma \ref{L1}, we compute the images of the left and right hand sides of the relation (\ref{eq_grouprelation}) in $H[F_2]$:
\begin{eqnarray*}
abab\ldots a &=& (ab)^pb^{-1}\\
&=& U_{p-1}(Q)ab - U_{p-2}(Q)b^{-1}\\
&=& U_{p-1}(Q) a - U_{p-2}(Q) b^{-1}\\
&=& U_{p-1}(Q) X - U_{p-2}(Q)y + U_{p-2}(Q)t
\end{eqnarray*}
where $Q = (ab + b^{-1}a^{-1})/2 = (X(y+t) + (y-t)X^{-1})/2 = xy+z$. Second, we compute
\begin{eqnarray}
b^p &=& U_{p-1}(y)b - U_{p-2}(y)\label{yu_2_bp}\\
&=& U_{p-1}(y) y - U_{p-2}(y) + U_{p-1}(y) t\notag\\
&=& T_p(y) + U_{p-1}(y)t\notag
\end{eqnarray}
Hence, $abab\ldots a - b^p = A + Bt$, where
\begin{eqnarray*}
A &:=& U_{p-1}(Q) - U_{p-2}(Q)y - T_p(y)\\
B &:=& U_{p-2}(Q) - U_{p-1}(y)
\end{eqnarray*}

We would like to show that the longitude $l$ is in the subalgebra $[R] \subset H[\pi]$ which is the image of $R$ under the quotient $H[F_2] \twoheadrightarrow H[\pi]$. By Proposition \ref{yuri_prop2}, this is true if $l = [r_0 + rt]$ with $r \in J$, where $J$ is the ideal
\begin{eqnarray}\label{eq_olddefofJ}
J_1 &=& \langle A,A^\sigma,A^\delta,A^{\sigma \delta},B\rangle
\end{eqnarray}
Here we have written $A^\sigma = \sigma(A)$, etc. where $\sigma: R \to R$ is the involution of $R$ and $\delta: R \to R$ is the $\sigma$-derivation of $R$ defined by 
\begin{align*}
\sigma(X^{\pm 1}) &= X^{\mp 1},\quad \quad \sigma(y) = y,\quad \quad\sigma(z)= z\\
\delta(X^{\pm 1}) &= \pm 2z,\quad \quad  \delta(y)=\delta(z) = 0
\end{align*}
Note that since $U_{p-1}(y) y + T_p(y) = U_p(y)$, we can rewrite $A$ as 
\[
A = U_{p-1}(Q) X - B y - U_p(y)
\]
Hence,
\begin{eqnarray*}
A^\sigma &=& X^{-1}U_{p-1}(Q) - By-U_p(y)\\
A^\delta &=& 2U_{p-1}(Q)z\\
A^{\sigma \delta} &=& -2U_{p-1}(Q)z
\end{eqnarray*}
It follows that $J_1\subset R$ is defined by 
\[
J_1 = \langle U_{p-1}(Q)X - U_p(y),U_{p-1}(Q)X^{-1}-U_p(y),U_{p-2}(Q)-U_{p-1}(y),zU_{p-1}(Q)\rangle
\]
Since $Q = z + xy$, we have
\begin{eqnarray*}
QU_{p-1}(Q) &=& zU_{p-1}(Q) + xy U_{p-1}(Q)\\
&=& zU_{p-1}(Q) + y(XU_{p-1}(Q) + X^{-1}U_{p-1}(Q))/2\\
&\equiv& y U_{p}(y) \,\,(\mathrm{mod}\,\,J_1)
\end{eqnarray*}
This shows that $J_1$ is generated by the elements
\begin{equation}\label{yu_relations}
U_{p-1}(Q)X - U_p(y),\quad U_{p-1}(Q)X^{-1} - U_p(y),\quad U_{p-2}(Q) - U_{p-1}(y),\quad QU_{p-1}(Q) - yU_p(y)
\end{equation}
Since $Y=b^{p+1}a^{-p(p+1)} = b^{p+1}x^{-p(p+1)} \in H[\pi]$, to verify (\ref{BHconj}) it suffices to show that $b^{p+1} \in H^+[X^{\pm 1}]$. For this, by 
the same computation as in \eqref{yu_2_bp}, it suffices to show that 
\begin{equation}\label{yu_9}
U_p(y) \in J_1
\end{equation}
We will need some elementary properties of Chebyshev polynomials, which we give in the following:
\begin{lemma}\label{yu_lemma2}
For any $p \geq 2$, we have
\begin{eqnarray*}
U_{p-1}U_{p+1}&=& -1 + U_p^2\\
\mathrm{gcd}(U_p-U_{p-1},U_{p-2}-U_{p-3}) &=& 1
\end{eqnarray*}
\end{lemma}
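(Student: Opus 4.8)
The plan is to prove the two Chebyshev identities in Lemma~\ref{yu_lemma2} separately, using the trigonometric substitution $y = \cos\vartheta$ from \eqref{chebtr}.

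\textbf{First identity.} For $U_{p-1}U_{p+1} = U_p^2 - 1$, I would substitute $y=\cos\vartheta$ and use $U_n(\cos\vartheta) = \sin((n+1)\vartheta)/\sin\vartheta$. The claimed identity becomes $\sin(p\vartheta)\sin((p+2)\vartheta) = \sin^2((p+1)\vartheta) - \sin^2\vartheta$. This follows immediately from the product-to-sum formula $\sin\alpha\sin\beta = \tfrac12[\cos(\alpha-\beta) - \cos(\alpha+\beta)]$ applied to both sides: the left side equals $\tfrac12[\cos(2\vartheta) - \cos((2p+2)\vartheta)]$, and $\sin^2((p+1)\vartheta) - \sin^2\vartheta = \tfrac12[1-\cos((2p+2)\vartheta)] - \tfrac12[1-\cos(2\vartheta)]$ gives the same thing. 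Since this holds for all $\vartheta$ and both sides are polynomials in $y$, the identity holds in $\c[y]$. (Alternatively, one can give a direct induction on $p$ using the recurrence $U_{p+1} = 2yU_p - U_{p-1}$, which is arguably cleaner for a self-contained write-up.)

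\textbf{Second identity.} For $\gcd(U_p - U_{p-1}, U_{p-2} - U_{p-3}) = 1$ in $\c[y]$, I would argue that these two polynomials have no common root. Using $U_n(\cos\vartheta) = \sin((n+1)\vartheta)/\sin\vartheta$, one computes $U_p - U_{p-1} = [\sin((p+1)\vartheta) - \sin(p\vartheta)]/\sin\vartheta$, and the sum-to-product formula gives $\sin((p+1)\vartheta) - \sin(p\vartheta) = 2\cos\!\bigl(\tfrac{(2p+1)\vartheta}{2}\bigr)\sin\!\bigl(\tfrac{\vartheta}{2}\bigr)$, so $U_p - U_{p-1} = 2\cos\!\bigl(\tfrac{(2p+1)\vartheta}{2}\bigr)\sin\!\bigl(\tfrac{\vartheta}{2}\bigr)/\sin\vartheta$. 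Likewise $U_{p-2} - U_{p-3} = 2\cos\!\bigl(\tfrac{(2p-3)\vartheta}{2}\bigr)\sin\!\bigl(\tfrac{\vartheta}{2}\bigr)/\sin\vartheta$. A common root $y_0 = \cos\vartheta_0$ would force $\cos\!\bigl(\tfrac{(2p+1)\vartheta_0}{2}\bigr) = \cos\!\bigl(\tfrac{(2p-3)\vartheta_0}{2}\bigr) = 0$ (the factor $\sin(\vartheta_0/2)/\sin\vartheta_0 = 1/(2\cos(\vartheta_0/2))$ never vanishes at a root, as one checks by a limiting argument when $\sin\vartheta_0 = 0$), hence $(2p+1)\vartheta_0 \equiv (2p-3)\vartheta_0 \equiv \pi \pmod{2\pi}$, so $4\vartheta_0 \equiv 0 \pmod{2\pi}$, i.e. $\vartheta_0 \in \{0, \pi/2, \pi, 3\pi/2\}$; none of these satisfy $\cos((2p+1)\vartheta_0/2) = 0$ for any integer $p$ (check the four cases directly). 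Therefore the two polynomials are coprime. One should also handle the boundary degree considerations (e.g. $p=2$, where $U_{-1} = 0$) by direct inspection.

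\textbf{Main obstacle.} The genuine content is entirely in the second identity; the first is a standard and essentially mechanical computation. The care needed in the gcd statement is bookkeeping around the endpoints $\vartheta \in \{0,\pi/2,\pi,3\pi/2\}$ where $\sin\vartheta$ vanishes and the trigonometric expressions degenerate, plus confirming the small-$p$ base cases — none of this is deep, but it is where an incautious argument could slip. A purely algebraic alternative, avoiding all trigonometry, is to use the recurrence to show that $\gcd(U_p - U_{p-1}, U_{p-1} - U_{p-2})$ is constant in $p$ (since $U_p - U_{p-1} = 2y(U_{p-1} - U_{p-2}) + (1-2y)(U_{p-2} - U_{p-3}) + \cdots$, or more simply via the Euclidean algorithm on consecutive differences), reducing to a base case; I would present whichever version reads more cleanly alongside the surrounding computation.
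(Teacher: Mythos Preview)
The paper states this lemma without proof, treating both identities as elementary properties of Chebyshev polynomials; your write-up therefore supplies what the paper omits, and both parts of your argument are sound in outline.

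One slip to fix in the second identity: your claim that none of $\vartheta_0 \in \{0,\pi/2,\pi,3\pi/2\}$ satisfy $\cos\bigl(\tfrac{(2p+1)\vartheta_0}{2}\bigr)=0$ is false at $\vartheta_0=\pi$, since $2p+1$ is odd and hence $\cos\bigl(\tfrac{(2p+1)\pi}{2}\bigr)=0$ for every integer $p$. The point is that this is exactly where the denominator $\cos(\vartheta_0/2)$ also vanishes, so the apparent zero is removable and $y_0=\cos\pi=-1$ is \emph{not} a root of the polynomial $U_p-U_{p-1}$. You should simply check this value directly: $U_n(-1)=(-1)^n(n+1)$, so $U_p(-1)-U_{p-1}(-1)=(-1)^p(2p+1)\neq 0$. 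With that patch (and the analogous direct check at $y_0=1$ and $y_0=0$, which you already flag), the argument is complete.
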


To simplify the notation, we set
\[
E := U_{p-1}(Q),\quad F:=U_{p-2}(Q)-U_{p-1}(y),\quad N := U_p(y)
\]
We also write ``$\equiv$'' for congruences in $R$ modulo $J_1$. The relations (\ref{yu_relations}) imply
\begin{eqnarray}
F &\equiv& 0\label{yu_1}\\
E &\equiv& XN \label{yu_2}\\
E &\equiv& X^{-1}N \label{yu_3}\\
QE &\equiv & yN \label{yu_4}
\end{eqnarray}
We need to show that $N \equiv 0$. By \eqref{yu_1} and Lemma \ref{yu_lemma2}, we have
\begin{eqnarray}
U_{p-3}(Q)E &=& U_{p-3}(Q)U_{p-1}(Q)\label{yu_6}\\
&=& -1 + U^2_{p-2}(Q)\notag\\
&\equiv& -1+U^2_{p-1}(y)\notag\\
&=& U_{p-2}(y)U_{p-1}(y)\notag\\
&=& U_{p-2}(y)N\notag
\end{eqnarray}
By \eqref{yu_2} and \eqref{yu_4}, 
\begin{equation}\label{yu_7}
QN \equiv (X^{-1}y)N
\end{equation}
Now if we combine (\ref{yu_3}), \eqref{yu_6}, and (\ref{yu_7}), we get
\begin{equation}\label{yu_8}
U_{p-2}(y)N \equiv U_{p-3}(Q) E \equiv X U_{p-3}(Q) N \equiv X U_{p-3}(X^{-1}y)N
\end{equation}
Now assume that $p$ is even. Then $U_{p-3}$ is an odd polynomial. Equations (\ref{yu_2}) and (\ref{yu_3}) show that $X^2N \equiv N$, which implies
\begin{equation}\label{yu_9}
XU_{p-3}(X^{-1}y)N \equiv U_{p-3}(y) N
\end{equation}
It therefore follows from (\ref{yu_8}) that
\begin{equation}\label{yu_10}
\left[U_{p-3}(y) - U_{p-2}(y)\right] N \equiv 0
\end{equation}
Similarly, by (\ref{yu_4}), $U_{p-1}(Q)N \equiv X^{-1}U_p(y)N$, which by (\ref{yu_7}) implies
\begin{equation}\label{yu_11}
XU_{p-1}(X^{-1}y)N \equiv U_p(y)N
\end{equation}
Again, if $p$ is even, then $U_{p-1}$ is an odd polynomial, so that $XU_{p-1}(X^{-1}y)N \equiv U_{p-1}(y)N$. Hence, (\ref{yu_11}) becomes
\begin{equation}\label{yu_12}
\left[ U_{p-1}(y) - U_p(y)\right]N \equiv 0
\end{equation}
By Lemma \ref{yu_lemma2}, the polynomials $U_{p-3}-U_{p-2}$ and $U_{p-1}-U_p$ are relatively prime. Hence, if $p$ is even, equations (\ref{yu_10}) and (\ref{yu_12}) combined together imply
\[
N \equiv 0
\]
Now, if $p$ is odd, arguing in a similar fashion, we can also derive from (\ref{yu_6}) the relations
\begin{eqnarray*}
\left(U_{p-3}(Q) - U_{p-2}(Q)\right) N &\equiv & 0\\
\left( U_p(Q) - U_{p-1}(Q)\right) N &\equiv& 0
\end{eqnarray*}
where the Chebyshev polynomials depend on $Q$ rather than $y$. By Lemma \ref{yu_lemma2}, we again conclude that $N \equiv 0$. Thus, for all $p \geq 2$ we have $N \equiv 0$, which completes the proof of Theorem \ref{Ttorus} for $(p,p+1)$ torus knots.

\subsection{Torus knots with $rs$ even}\label{sec_rseven}
We will now deduce Theorem \ref{Ttorus} for the $K(r,s)$ torus knot with $rs$ even using Propsition \ref{prop_ppp1} about $(p,p+1)$ torus knots combined with a covering argument using Lemma \ref{lemma_covering}. We first note that given relatively prime $r,s \in \Z$ there exist $n,k \in \Z$ such that
\[
-rk + sn = 1
\]
Let $p = rk$, and let $\tilde K$ be the $(p,p+1)$ torus knot with generators $\tilde u$ and $\tilde v$ satisfying $\tilde u^{p} = \tilde v^{p+1}$.

\begin{lemma}\label{lemma_ppp1covering}
Let $K$ be the $(r,s)$ torus knot with generators $u,v \in \pi_1(K)$ satisfying $u^r = v^s$ and with meridian and longitude $m = u^n v^{-k}$ and $l = v^s m^{-rs}$ as in (\ref{torus}). Then there is a covering map $\pi_1(\tilde K) \twoheadrightarrow \pi_1(K)$ sending $\tilde m \mapsto m$ and $\tilde l \mapsto l^{kn}$.
\end{lemma}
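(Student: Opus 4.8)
\textbf{Proof proposal for Lemma~\ref{lemma_ppp1covering}.}
The plan is to write down the covering homomorphism explicitly and verify the three required properties (well-definedness, compatibility with peripheral systems, surjectivity). First I would record the arithmetic consequences of \eqref{nk}: since $-rk+sn=1$ we have $p=rk$ and $p+1=sn$, and moreover $\gcd(n,k)=\gcd(n,r)=\gcd(k,s)=1$, each because any common divisor of the indicated pair divides $sn-rk=1$; in particular $sn\equiv 1\pmod r$ and $rk\equiv -1\pmod s$, so $n$ is a unit mod $r$ and $k$ is a unit mod $s$. Recall from \eqref{eq_mlpp1} (with $k=n=1$ for the $(p,p+1)$ knot) that in $\pi_1(\tilde K)=\langle \tilde u,\tilde v\mid \tilde u^{p}=\tilde v^{p+1}\rangle$ the peripheral pair is $\tilde m=\tilde u\tilde v^{-1}$, $\tilde l=\tilde v^{p+1}\tilde m^{-p(p+1)}$, and write $z:=u^r=v^s$ for the generator of the (infinite cyclic) center of $\pi_1(K)$.

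Next I would define $\phi:\pi_1(\tilde K)\to\pi_1(K)$ by $\tilde u\mapsto u^n$, $\tilde v\mapsto v^k$. This respects the defining relation because $\phi(\tilde u^{p})=u^{nrk}=z^{nk}$ and $\phi(\tilde v^{p+1})=v^{ksn}=z^{kn}$ coincide, so $\phi$ is a well-defined homomorphism. Then $\phi(\tilde m)=u^{n}v^{-k}=m$ by \eqref{torus}. For the longitude, using that $z$ is central and that $l=v^{s}m^{-rs}=zm^{-rs}$, one computes
\[
\phi(\tilde l)=v^{k(p+1)}m^{-p(p+1)}=z^{kn}\,m^{-rskn}=\big(zm^{-rs}\big)^{kn}=l^{kn},
\]
which is the claimed value of $d=kn$.

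Finally I would prove surjectivity of $\phi$, with image $H:=\langle u^{n},v^{k}\rangle$. Here I would invoke the classical Seifert structure of the torus knot group: the center $Z=\langle z\rangle$ satisfies $\pi_1(K)/Z\cong \Z/r \ast \Z/s$, with $u$ and $v$ mapping onto generators of the two free factors. Since $n$ is a unit mod $r$ and $k$ a unit mod $s$, the images of $u^n$ and $v^k$ still generate $\Z/r$ and $\Z/s$ respectively, so $H$ surjects onto $\pi_1(K)/Z$ and hence $\pi_1(K)=HZ$. On the other hand $z^{n}=(u^{n})^{r}\in H$ and $z^{k}=(v^{k})^{s}\in H$, and since $\gcd(n,k)=1$ this forces $z\in H$, i.e.\ $Z\subseteq H$. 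Combining, $H=HZ=\pi_1(K)$, so $\phi$ is the desired surjection. Together with the previous paragraph, $\phi$ is exactly a map of the type needed to apply Lemma~\ref{lemma_covering} to $K$ via the knots $K(p,p+1)$.

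\textbf{Main obstacle.} Everything after choosing the map is a direct computation; the only non-formal input is the identification $\pi_1(K)/Z\cong\Z/r\ast\Z/s$ together with the elementary order bookkeeping needed to conclude $Z\subseteq H$, and verifying that the integer solution $(k,n)$ of \eqref{nk} can (and should) be taken with $k,n\ge 1$ so that $p=rk\ge 2$ as required for $\tilde K$ to be an honest torus knot.
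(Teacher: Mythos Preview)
Your proof is correct and follows essentially the same route as the paper: both define the homomorphism by $\tilde u\mapsto u^{n}$, $\tilde v\mapsto v^{k}$, check the defining relation via $p=rk$ and $p+1=sn$, and compute $\phi(\tilde m)=m$ and $\phi(\tilde l)=l^{kn}$ using that $z=u^{r}=v^{s}$ is central (the paper phrases this last point as ``$l$ commutes with $m$, hence $v^{s}$ commutes with $m$''). For surjectivity the paper says only ``because $u^{r}=v^{s}$ and because $n,k$ are relatively prime''; your argument via the quotient $\pi_1(K)/Z\cong \Z/r\ast\Z/s$ is a more explicit unpacking of the same idea, and in fact one can shortcut it elementarily: from $z^{n},z^{k}\in H$ and $\gcd(n,k)=1$ you get $z=u^{r}=v^{s}\in H$, and then $\gcd(n,r)=\gcd(k,s)=1$ gives $u,v\in H$ directly, avoiding the Seifert structure.
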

\begin{proof}
We construct the claimed covering map directly via $\tilde u \mapsto u^n$ and $\tilde v \mapsto v^k$. We then check 
\[
\tilde u^p \mapsto u^{pn} = (u^{r})^{kn}= (v^s)^{kn} = (v^k)^{sn} = (\mathrm{im}(\tilde v))^{p+1}                                                                                                                                                                                                                                       \]
which shows that this map is well-defined. It is surjective because $u^r = v^s$ and because $n,k$ are relatively prime. We then note that by equation (\ref{eq_mlpp1}), the meridian and longitude for $\tilde K$ satisfy $\tilde m = \tilde u \tilde v^{-1} $ and $ \tilde l = \tilde v^{p+1}\tilde m^{-p(p+1)}$. We then check
\[
\tilde m \mapsto u^n v^{-k} = m
\]
Similarly, we compute
\[
\tilde l = \tilde v^{p+1} \tilde m^{-p(p+1)} \mapsto v^{k(p+1)} m^{-p(p+1)}
= v^{ksn} m^{-rksn} = (v^s m^{-rs})^{kn} = l^{kn}
\]
(We remark that the second to last equality follows from the fact that $l$ commutes with $m$, which implies that $v^s$ commutes with $m$ also.) 
\end{proof}

In the previous lemma we only used one solution $(k,n)$ to the equation $-rk + sn = 1$. However, an arbitrary solution to this equation is given by
\[
-r(k + ts) + s(n + tr) = 1,\quad \quad t \in \Z
\]
\begin{lemma}\label{lemma_numbertheory}
Let $N(t) = (k+ts)(n+tr)$, and suppose that $rs$ is even. Then the ideal in $\Z$ generated by the set $\{N(t)\}$ is equal to $\Z$.
\end{lemma}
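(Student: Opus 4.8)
\textbf{Proof plan for Lemma \ref{lemma_numbertheory}.} The plan is to compute, for varying $t$, the integers $N(t) = (k+ts)(n+tr)$ modulo a fixed prime power and show that they are not all divisible by that prime power, so the ideal they generate in $\Z$ cannot be a proper ideal. Since $-rk+sn=1$, the pair $(r,s)$ is coprime and also $(k,s)$, $(n,r)$ are coprime (from $sn \equiv 1 \pmod r$ and $-rk\equiv 1 \pmod s$). Suppose for contradiction that the ideal generated by $\{N(t) : t \in \Z\}$ is $\ell\Z$ for some prime $\ell$; I would derive a contradiction by producing a value of $t$ with $\ell \nmid N(t)$.

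First I would handle the two cases according to whether $\ell \mid rs$ or not. If $\ell \nmid rs$, then as $t$ runs over residues mod $\ell$, the value $k+ts$ runs over all residues mod $\ell$ (since $s$ is invertible mod $\ell$), so in particular there is a choice of $t$ with $k+ts \not\equiv 0 \pmod \ell$; similarly one can simultaneously arrange $n+tr \not\equiv 0$ unless $\ell = 2$ and the two linear forms $k+ts$, $n+tr$ have "opposite" behavior. More carefully: the bad set for the first factor is a single residue class mod $\ell$, likewise for the second, so as long as $\ell \geq 3$ there is a $t$ avoiding both, giving $\ell \nmid N(t)$. For $\ell = 2$ with $2 \nmid rs$, both $r$ and $s$ are odd, so $k+ts$ and $n+tr$ each change parity as $t$ changes parity; one checks $k+ts$ and $n+tr$ cannot both be even for all $t$ — indeed if $t$ and $t+1$ both made $N$ even, one gets $rk, sn$ of forced parities contradicting $-rk+sn=1$ being odd. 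This is exactly where the hypothesis that $rs$ is even is \emph{not} needed — wait: re-examining, the case $2 \mid rs$ is the one the hypothesis targets, so I reorganize.

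The cleaner organization: assume $rs$ even, so exactly one of $r,s$ is even (they are coprime). Say $s$ is even and $r$ is odd; then $k$ is odd (since $-rk+sn=1$ forces $rk$ odd). Then $k+ts \equiv k \equiv 1 \pmod 2$ for all $t$, so the first factor is always odd, hence $2 \nmid N(t)$ for every $t$, so $2$ is not in the radical of the ideal. Symmetrically if $r$ is even. For an odd prime $\ell$: if $\ell \nmid rs$, the argument of the previous paragraph ($\ell \geq 3$ lets us dodge two residue classes) shows some $N(t)$ is prime to $\ell$. If $\ell \mid rs$, say $\ell \mid s$ (the case $\ell \mid r$ is symmetric), then $k+ts \equiv k \pmod \ell$ and $\ell \nmid k$ (as $sn \equiv 1 \pmod \ell$ would fail, actually we need $\gcd(k,s)=1$... from $-rk+sn=1$ we get $-rk \equiv 1 \pmod \ell$ so $\ell \nmid k$), hence again the first factor is never divisible by $\ell$. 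Thus no prime $\ell$ divides all $N(t)$, so $\gcd\{N(t)\} = 1$ and the ideal is all of $\Z$.

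The main obstacle — really the only subtle point — is the case of an odd prime $\ell$ with $\ell \nmid rs$: there one genuinely uses both linear forms and must check that their two "forbidden" residue classes mod $\ell$ do not exhaust $\Z/\ell$, which holds precisely because $\ell \geq 3$. (If $\ell = 2$ and $2\nmid rs$ this dodging could fail, but that situation is excluded by the hypothesis $rs$ even.) Everything else is a short parity/divisibility bookkeeping using the relation $-rk+sn=1$ and the coprimality it forces among $r,s,k,n$. I would present the $\ell=2$ step first (invoking $rs$ even), then dispatch odd $\ell$ by the two sub-cases, and conclude that the radical of the ideal is trivial.
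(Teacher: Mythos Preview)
Your approach is sound and genuinely different from the paper's, but there is a recurring slip that you should fix. In the cases where $\ell \mid s$ (including $\ell = 2$), you correctly observe that the first factor $k+ts \equiv k \not\equiv 0 \pmod{\ell}$ for all $t$, but then conclude ``hence $\ell \nmid N(t)$ for every $t$.'' That does not follow: the second factor $n+tr$ can certainly be divisible by $\ell$ for some $t$, and then so is $N(t)$. What you need (and what is easy) is that \emph{some} $N(t)$ is prime to $\ell$. Since in these cases $\ell \nmid r$ (as $\gcd(r,s)=1$), the second factor $n+tr$ runs over all residues mod $\ell$ as $t$ varies, so you can choose $t$ with $\ell \nmid (n+tr)$ as well. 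With this correction your argument goes through: for every prime $\ell$ there is a $t$ with $\ell \nmid N(t)$, hence $\gcd\{N(t)\}=1$ and the ideal is $\Z$.

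For comparison, the paper argues quite differently. It writes $N(t) = at^2 + bt + c$ with $a = rs$, $b = sn+rk$, $c = nk$, and by taking linear combinations of $N(0)$, $N(\pm 1)$, $N(2)$ shows that $2a$, $2b$, $a+b$, and $c$ all lie in the ideal. A short computation using $-rk+sn=1$ gives $\gcd(a,b)=1$, so $2$ lies in the ideal; then since $rs$ is even, $a+b$ is odd, forcing $1$ into the ideal. Your prime-by-prime argument is arguably more transparent about where the hypothesis $rs$ even enters (it is exactly the $\ell=2$ case that would otherwise fail when both $r,s$ are odd), while the paper's finite-difference method is more hands-on and avoids case analysis over primes. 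Both are short once the slip above is repaired.
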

\begin{proof}
Let $M \subset \Z$ be the $\Z$-submodule generated by $N(t)$, and let $c = nk$, $b = sn+rk$, and $a = rs$, so that $N(t) = at^2 + bt + c$. It is clear that $c \in M$, and since $N(-1) + N(1) - 2N(0) = 2a$, we see that $2a \in M$, and similarly $2b \in M$. Finally, $N(2) - N(-1) = 3a + 3b$, which shows that $a+b \in M$. Summarizing, $\langle 2a, 2b, a+b,c\rangle \subset M$.

Now we claim that $a$ and $b$ are relatively prime. Suppose not, so that there is a prime $p$ dividing $a = rs$ and $b = sn + rk$. Since $r$ and $s$ are relatively prime, $p$ must divide either $r$ or $s$. However, $b = sn + rk = 2rk + 1$, which means that $p$ cannot divide both $r$ and $b$. Similarly, $b = 2sn - 1$, which means that $p$ cannot divide both $s$ and $b$, which is contradiction. 

Since $2a,2b \in M$ and $a,b$ are relatively prime, this shows $2 \in M$. Now we have assumed that $a = rs$ is even, which implies $a + b = a + 2rk + 1$ is odd. Since $2 \in M$ and $a+b \in M$, this shows that $1 \in M$, which completes the proof.
\end{proof}

\begin{corollary}
If $rs \in \Z$ is even, then the $(r,s)$ torus knot satisfies condition \eqref{BHconj}.
\end{corollary}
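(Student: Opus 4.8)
The plan is to combine the four ingredients already assembled in this subsection. Proposition~\ref{prop_ppp1} gives condition~\eqref{BHconj} for every torus knot of type $(p,p+1)$; Lemma~\ref{lemma_ppp1covering} produces, from each solution of $-rk+sn=1$, a peripheral-system-preserving epimorphism from such a knot group onto $\pi_1(K(r,s))$; Lemma~\ref{lemma_numbertheory} shows the resulting longitude exponents generate $\Z$ when $rs$ is even; and Lemma~\ref{lemma_covering} packages these into the desired conclusion.

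Concretely, I would first recall that the solutions of $-rk+sn=1$ are exactly $(k+ts,\,n+tr)$ for $t\in\Z$. For each such $t$, put $p_t := r(k+ts)$ and let $\tilde K_t := K(p_t,p_t+1)$. Applying Lemma~\ref{lemma_ppp1covering} with the solution $(k+ts,\,n+tr)$ in place of $(k,n)$ yields an epimorphism $\pi_1(\tilde K_t)\twoheadrightarrow\pi_1(K(r,s))$ carrying the meridian to the meridian and the longitude of $\tilde K_t$ to $l^{N(t)}$, where $N(t)=(k+ts)(n+tr)$. In the terminology of Lemma~\ref{lemma_covering} this says $\tilde K_t\geq_p K(r,s)$ with $d_t=N(t)$, and by Proposition~\ref{prop_ppp1} each $\tilde K_t$ satisfies~\eqref{bhcond}.

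To finish, since $rs$ is even Lemma~\ref{lemma_numbertheory} guarantees that the integers $\{N(t)\}_{t\in\Z}$ generate $\Z$ as a group; choosing finitely many $t_1,\dots,t_m$ and $a_i\in\Z$ with $\sum_i a_i N(t_i)=1$ and feeding the family $\{\tilde K_{t_i}\}$ into Lemma~\ref{lemma_covering} gives $Y\in H^+[X^{\pm1}]$ for $K(r,s)$, which is exactly~\eqref{BHconj} and hence Theorem~\ref{Ttorus} in the $rs$-even case. One small bookkeeping point is to ensure the chosen $t_i$ can be taken with $p_{t_i}\geq 2$, so that each $\tilde K_{t_i}$ is a genuine $(p,p+1)$ torus knot covered by Proposition~\ref{prop_ppp1}; this is harmless, since shifting $t$ by a fixed integer only translates the quadratic $N$ and the argument of Lemma~\ref{lemma_numbertheory} is insensitive to such a translation.

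The genuinely load-bearing step, and the one that forces the even/odd dichotomy, is Lemma~\ref{lemma_numbertheory}. Note that $b := sn+rk = 2rk+1$ is always odd, so modulo $2$ one has $N(t)\equiv rs\,t^2+t+nk\equiv rs\,t^2+t+nk\pmod 2$; when $rs$ is odd this is $\equiv nk\pmod 2$ for all $t$, and then (since $n+k$ is odd) $nk$ is even, so \emph{every} $N(t)$ is even and the covering argument would only yield $Y^2\in H^+[X^{\pm1}]$. This is precisely why the odd case cannot be reduced to the $(p,p+1)$ family and must instead be handled through the $(2,2p+1)$ torus knots in the next subsection.
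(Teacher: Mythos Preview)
Your proof is correct and follows exactly the paper's approach: the paper's own proof is the single sentence ``This follows from Lemma~\ref{lemma_numbertheory} and Lemma~\ref{lemma_covering},'' and you have simply unpacked what that sentence means, invoking Proposition~\ref{prop_ppp1} and Lemma~\ref{lemma_ppp1covering} along the way as the paper implicitly intends. Your additional paragraph explaining the parity obstruction in the $rs$-odd case is correct and helpful context (one small slip: the paper reduces the odd case to $(p,2p+1)$ torus knots, not $(2,2p+1)$), and your bookkeeping remark about ensuring $p_{t_i}\geq 2$ is a point the paper glosses over.
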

\begin{proof}
This follows from Lemma \ref{lemma_numbertheory} and Lemma \ref{lemma_covering}.
\end{proof}

\subsection{Torus knots with $rs$ odd}
In this section we use a covering argument similar to the one in the previous section to prove that condition \eqref{BHconj} for $(r,s)$ torus knots with $rs$ odd follows from the same conjecture for $(p,2p+1)$ torus knots. We then show this condition holds for $(p,2p+1)$ torus using some calculations along with results of the previous section.

\subsubsection{Covering}
Assume that $r,s \in \Z$ are both odd and are relatively prime. Then there exist $\tilde k, \, \tilde n \in \Z$ such that 
\[
-\tilde k r + \tilde n s = 1
\]
Since $r$ and $s$ are both odd, one of $\tilde k$ or $\tilde n$ must be even. Assume without loss of generality that $\tilde k =: 2k$ is even, and let $n := \tilde n$. Then we have
\begin{equation}\label{eq_2star}
-2kr + ns = 1
\end{equation}
Define $p := kr$ and $q := ns = 2p+1$. Arguing similarly to Lemma \ref{lemma_ppp1covering}, we have a group epimorphism
\[
\pi_1(p,2p+1) \twoheadrightarrow \pi_1(r,s),\quad \quad \tilde m \mapsto m,\quad \tilde l \mapsto l^{nk}
\]
Given a fixed choice of $(n,k)$ satisfying (\ref{eq_2star}), any possible choice $(n',k')$ satisfying the same equation is given by
\[
k' := k + st,\quad n' := n + 2rt,\quad \quad t \in \Z
\]
If we define $N(t) := n'k' = nk + (2rk + ns)t + 2rst^2$, then $\tilde l \mapsto l^{N(t)}$. Consider the ideal in $\Z$ generated by the values of $N(t)$:
\[
I_{r,s} := \langle N(t) \mid t \in \Z\rangle \subset \Z
\]
\begin{lemma}
We have the equality $I_{r,s} = \Z$.
\end{lemma}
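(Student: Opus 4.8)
The plan is to argue exactly as in the proof of Lemma~\ref{lemma_numbertheory}. Write $N(t) = a t^2 + b t + c$ with
\[
a := 2rs,\qquad b := 2rk + ns,\qquad c := nk,
\]
and let $M := I_{r,s} \subseteq \Z$ be the subgroup generated by all the values $N(t)$, $t \in \Z$. First I would extract small elements of $M$ by taking integer combinations of $N(0), N(\pm 1)$: one has $N(0) = c \in M$, then $N(1) + N(-1) - 2N(0) = 2a \in M$, then $N(1) - N(-1) = 2b \in M$, and $N(1) - N(0) = a + b \in M$. So $\langle 2a,\, 2b,\, a+b,\, c\rangle \subseteq M$.

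The key step is to show $\gcd(a,b) = 1$, using \eqref{eq_2star} in the two forms $ns = 2kr + 1$ and $2kr = ns - 1$. These give $b = 2rk + ns = 4rk + 1$ and also $b = 2ns - 1$. In particular $b$ is odd, so $2 \nmid \gcd(a,b)$. If an odd prime $p$ divided $a = 2rs$, then since $r$ and $s$ are coprime $p$ would divide exactly one of $r,s$; but $b = 4rk+1 \equiv 1 \pmod{r}$ excludes $p \mid r$, and $b = 2ns-1 \equiv -1 \pmod{s}$ excludes $p \mid s$. Hence $\gcd(a,b) = 1$.

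Finally I would conclude: from $2a, 2b \in M$ and $\gcd(a,b)=1$ we get $\gcd(2a,2b) = 2 \in M$. Since $a = 2rs$ is even and $b = 4rk+1$ is odd, $a+b$ is odd; together with $2 \in M$ and $a+b \in M$ this forces $1 \in M$, so $M = \Z$, i.e. $I_{r,s} = \Z$.

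There is no real obstacle here — the argument is a routine adaptation of Lemma~\ref{lemma_numbertheory}. The only point requiring a little care is the coprimality of $a$ and $b$: one must use both expressions $b = 4rk + 1$ and $b = 2ns - 1$ coming from \eqref{eq_2star}, and then the parity observation (that $a$ is even while $a+b$ is odd) to pass from $2 \in M$ to $1 \in M$.
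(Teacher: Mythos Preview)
Your proposal is correct and follows essentially the same approach as the paper: set $a=2rs$, $b=2rk+ns$, $c=nk$, extract $2a,2b,a+b,c$ from small values of $N(t)$, use the two forms $b=4rk+1=2ns-1$ to show $\gcd(a,b)=1$, and conclude via the parity of $a+b$. The only cosmetic difference is that you obtain $a+b$ directly from $N(1)-N(0)$ rather than via $N(2)-N(-1)$ as in the proof of Lemma~\ref{lemma_numbertheory}, which is in fact slightly cleaner.
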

\begin{proof}
Put $a := 2rs$, $b = 2rk+ns = 4rk+1 = 2ns - 1$, and $c = nk$. Then 
\[
I_{r,s} = \langle c + bt + at^2 \mid t \in \Z\rangle
\]

From the proof of Lemma \ref{lemma_numbertheory}, we see that $\langle 2a,2b,a+b,c\rangle \subset I_{r,s}$. We then note that $gcd(a,b) = 1$. Indeed, suppose some prime $P$ divides $a = 2rs$. Then $P$ divides either $2$, $r$ or $s$, but in each case $P$ cannot divide $b$ because $b = 4rk+1 = 2ns-1$.

Since $gcd(a,b) = 1$, we have $2 = gcd(2a,2b) \in I_{r,s}$. However, $a = 2rs$ is even and $b = 2ns-1$ is odd, which implies $a+b$ is odd. Therefore, $I_{r,s} = \Z$.
\end{proof}
\begin{corollary}
Condition \eqref{BHconj} for the $(r,s)$ torus knot with $rs$ odd follows from Condition \eqref{BHconj} for the $(p,2p+1)$ torus knots.
\end{corollary}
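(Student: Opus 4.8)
The plan is to read this off from \textbf{Lemma~\ref{lemma_covering}}, in exact parallel with the corollary in the $rs$ even case. First I would fix relatively prime odd integers $r,s$ and a solution $(n,k)$ of \eqref{eq_2star}, and for each $t \in \Z$ set $k' := k + st$, $n' := n + 2rt$, $p_t := k'r$, so that $(n',k')$ is again a solution of $-2kr + ns = 1$ and $(p_t,\,2p_t+1)$ is a torus knot; call it $K_t$. By the hypothesis of the corollary, each $K_t$ satisfies \eqref{BHconj}. Next I would invoke the covering map already constructed in the discussion just above: arguing as in Lemma~\ref{lemma_ppp1covering} via $\tilde u \mapsto u^{n'}$, $\tilde v \mapsto v^{k'}$, one obtains a group epimorphism $\pi_1(K_t) \twoheadrightarrow \pi_1(r,s)$ carrying the meridian to the meridian and the longitude $\tilde l$ to $l^{N(t)}$, where $N(t) = n'k'$. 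In the notation of Section~\ref{sec_connectsum} this says $K_t \geq_p K$ with the integer $d_t := N(t)$.

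The final ingredient is the lemma immediately preceding this corollary, which says $I_{r,s} = \langle N(t) \mid t \in \Z\rangle = \Z$; thus the $d_t$ generate $\Z$ as a group. Feeding the family $\{K_t\}_{t \in \Z}$ into Lemma~\ref{lemma_covering} (in the role of the $K_i$, with exponents $d_t$) then yields $Y \in H^+[X^{\pm 1}]$ for the $(r,s)$ torus knot, i.e.\ \eqref{BHconj}. In brief: combine the lemma above with Lemma~\ref{lemma_covering}.

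I do not expect a genuine obstacle. The arithmetic heart of the matter is already isolated in the preceding lemma, and the covering epimorphisms $\pi_1(K_t) \twoheadrightarrow \pi_1(r,s)$ were produced before the corollary by copying Lemma~\ref{lemma_ppp1covering} essentially verbatim. The only points that deserve a second glance are bookkeeping: that $(n+2rt,\,k+st)$ really is the general solution of \eqref{eq_2star} (so that the exponents $N(t)$ occurring in Lemma~\ref{lemma_covering} are exactly the ones controlled by $I_{r,s}$), and that for the finitely many shifts $t$ actually used one is dealing with bona fide $(p,2p+1)$ torus knots (e.g.\ $p_t \geq 1$, the degenerate values giving only the unknot, which satisfies \eqref{BHconj} trivially).
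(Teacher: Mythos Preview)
Your proposal is correct and is exactly the argument the paper intends: the corollary is the immediate consequence of the preceding lemma ($I_{r,s}=\Z$) together with Lemma~\ref{lemma_covering}, in direct parallel with the $rs$ even case. The paper does not even spell this out; your summary ``combine the lemma above with Lemma~\ref{lemma_covering}'' is precisely the intended proof.
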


\subsubsection{$(p,2p+1)$ torus knots}
In this section we prove condition \eqref{BHconj} for $(p,2p+1)$ torus knots. If $p$ is even, then we proved this in Section \ref{sec_rseven}, so we will assume that $p$ is odd.

\begin{proposition}\label{prop_p2pp1podd}
If $p$ is odd, then the $(p,2p+1)$ torus knot satisfies condition \eqref{BHconj}.
\end{proposition}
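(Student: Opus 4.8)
The plan is to imitate the argument of Proposition \ref{prop_ppp1} for $(p,p+1)$ torus knots, but now working with the group presentation of the $(p,2p+1)$ torus knot and a suitable change of generators. Concretely, from \eqref{nk} with $r=p$, $s=2p+1$ we may take $k=n=1$ (since $-p+ (2p+1)=p+1$, so in fact we should solve $-pk+(2p+1)n=1$, which is solved by $k=2,n=1$ giving $m=u v^{-2}$, $l=v^{2p+1}m^{-p(2p+1)}$). As in the $(p,p+1)$ case, I would introduce new generators adapted to the meridian, say $a:=m=uv^{-2}$ and $b:=v$, rewrite the relator $u^p=v^{2p+1}$ in terms of $a,b$, and pass to the Brumfiel-Hilden algebra $H[F_2]\cong R\oplus Rt$ from the Appendix, with $X=a$, $y=(b+b^{-1})/2$, $t=(b-b^{-1})/2$, $t^2=y^2-1$, and $z$ as defined there.

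The key steps, in order: (1) expand $u^p=(ab^2)^p$ using Lemma \ref{L1} (with $c=ab^2$, so $c^+=(ab^2+b^{-2}a^{-1})/2$), and expand $v^{2p+1}=b^{2p+1}$ using Lemma \ref{L1} as well, to write the relator as $A+Bt$ for explicit $A,B\in R$ involving Chebyshev polynomials $U_{p-1},U_{p-2}$ evaluated at the relevant arguments; (2) by Proposition \ref{yuri_prop2} reduce \eqref{BHconj} to showing $v^{2p+1}\in H^+[X^{\pm1}]$, which as in \eqref{yu_2_bp} amounts to showing $U_p(y)\in J_1$ where $J_1=\langle A,A^\sigma,A^\delta,A^{\sigma\delta},B\rangle$; (3) compute $A^\sigma$, $A^\delta$, $A^{\sigma\delta}$ explicitly (the $\delta$-images will again be multiples of the ``$E$''-type generator $U_{p-1}$ of the appropriate argument times $z$, as in the $(p,p+1)$ case), obtain a clean generating set for $J_1$ analogous to \eqref{yu_relations}, and derive the analogues of the congruences \eqref{yu_1}--\eqref{yu_4}; (4) run the Chebyshev manipulation using Lemma \ref{yu_lemma2} (the identity $U_{p-1}U_{p+1}=U_p^2-1$ and the coprimality of consecutive Chebyshev differences) together with the parity of $p$ — here $p$ is odd, so I would use the ``$Q$-version'' congruences $(U_{p-3}(Q)-U_{p-2}(Q))N\equiv0$ and $(U_p(Q)-U_{p-1}(Q))N\equiv 0$ exactly as in the odd case at the end of Proposition \ref{prop_ppp1}, to conclude $N=U_p(y)\equiv 0$ mod $J_1$; (5) finally note $Y=b^{2p+1}x^{-p(2p+1)}\in H^+[X^{\pm1}]$, establishing \eqref{BHconj}.

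I expect the main obstacle to be step (1)–(3): the relator $u^p=v^{2p+1}$ is genuinely less symmetric than $abab\cdots a=b^p$ in the $(p,p+1)$ case, so the bookkeeping in expanding $(ab^2)^p$ and reducing to a tractable generating set for $J_1$ will be the delicate part — in particular one must check that $\delta$ applied to $A$ again produces only multiples of the single ``Chebyshev-in-$Q$'' element $E$ times $z$, so that the ideal $J_1$ collapses to the same shape of four relations as in \eqref{yu_relations}. Once the analogues of \eqref{yu_1}--\eqref{yu_4} are in hand, the endgame is a routine copy of the odd-$p$ branch already carried out, invoking Lemma \ref{yu_lemma2} and the covering corollary of the previous subsection; so essentially all the new work is in setting up the correct ideal and congruences for this family. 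As a sanity check one can specialize to the trefoil $(2,\cdot)$ or small odd $p$ and verify the congruences directly, mirroring the explicit trefoil computation in Section \ref{sec_extrefoil}.
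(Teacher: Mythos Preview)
Your setup is broadly right (and $Q=T_2(y)x+2yz$ is exactly what one gets from $c=ab^2$), but there are two genuine gaps.

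First, a slip in the target: since $l=b^{2p+1}X^{-p(2p+1)}$ and $b^{2p+1}=T_{2p+1}(y)+U_{2p}(y)t$, the condition \eqref{BHconj} reduces to $U_{2p}(y)\in J_1$, not $U_p(y)\in J_1$. This matters, because the indices in the Chebyshev manipulations double.

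Second, and more seriously, the endgame is \emph{not} a routine copy of the odd-$p$ branch of Proposition~\ref{prop_ppp1}. The analogues of \eqref{yu_1}--\eqref{yu_4} that you obtain from $J_1$ (namely $EX\equiv N$, $EX^{-1}\equiv N$, $2yN\equiv U_{2p}(y)$, $QE\equiv(2y^2-1)N$ with $E=U_{p-1}(Q)$ and $N=U_{2p-1}(y)-U_{p-2}(Q)$) are not enough by themselves to force $N\equiv 0$. The paper's proof needs one extra relation that does \emph{not} come from the relator at all: it uses the covering $\pi_1(K(2p,2p+1))\twoheadrightarrow\pi_1(K(p,2p+1))$ from Lemma~\ref{lemma_ppp1covering} (with $k=2$, $n=1$, so $\tilde l\mapsto l^2$), together with Proposition~\ref{prop_ppp1}, to conclude a priori that $l^2\in H^+[X^{\pm 1}]$, i.e.
\[
T_{2p+1}(y)\,U_{2p}(y)\in J_1.
\]
This congruence is then combined with a new Chebyshev identity $(1-y^2)U_n^2+T_{n+1}^2=1$ and the resulting $\gcd$ statements $\gcd(U_{2p},yT_{2p+1})=\gcd(T_{2p},yT_{2p+1})=1$ to squeeze out $N\equiv 0$. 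Your proposal mentions ``the covering corollary'' only in passing; in fact it is the decisive new input, and without it the Chebyshev argument does not close. You should make the use of $l^2\in H^+[X^{\pm1}]$ explicit and replace the coprimality input from Lemma~\ref{yu_lemma2} by the identities just mentioned.
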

The proof will occupy the rest of this section.

\begin{lemma}
The ideal $J_1$ for the Brumfiel-Hilden algebra $H = H[\pi(p,2p+1)]$ is generated by the following relations:
\begin{eqnarray}
U_{p-1}(Q)X - U_{2p-1}(y) + U_{p-2}(Q)\label{yuo_a}\\
U_{p-1}(Q)X^{-1} - U_{2p-1}(y) + U_{p-2}(Q)\label{yuo_b}\\
2U_{p-2}(Q)y - U_{2p-2}(y)\label{yuo_c}\\
zU_{p-1}(Q)\label{yuo_d}
\end{eqnarray}
where $Q := T_2(y)x + 2yz = (2y^2-1)x + 2yz$.
\end{lemma}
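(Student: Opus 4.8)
The plan is to follow the same mechanism used for $(p,p+1)$ torus knots in Proposition \ref{prop_ppp1}, but now starting from the presentation of $\pi(p,2p+1)$ adapted to its peripheral system. For the $(p,2p+1)$ torus knot we have $r=p$, $s=2p+1$, and the solution $-rk+sn=1$ may be taken with $k=n=1$ (since $-p+(2p+1)=p+1$... one checks the correct small solution), so the meridian is $m=u^nv^{-k}$ and the longitude $l=v^{s}m^{-rs}$. First I would change generators by setting $a:=m=u^nv^{-k}$ and $b:=v$, rewriting the single relation $u^r=v^s$ as a word identity in $a,b$; this produces a two-generator one-relator presentation so that the Appendix description $H[F_2]\cong R\oplus Rt$ with $R=\C[X^{\pm1},y,z]$, $t^2=y^2-1$ applies. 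The new relator will be of the shape $(ab^{?})^{p}\cdot(\text{tail}) = b^{2p+1}$ (or similar), and the point of the computation is to expand both sides in the basis $\{1,t\}$ using Lemma \ref{L1}.

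Next I would carry out the expansion. Writing the relator as $A+Bt$ with $A,B\in R$, Lemma \ref{L1} turns powers of the relevant words into Chebyshev combinations: $b^{2p}$ becomes $U_{2p-1}(y)b-U_{2p-2}(y)$, giving the $U_{2p-1}(y)$, $U_{2p-2}(y)$ terms, while powers of the word $ab$ (whose ``plus part'' is $Q=T_2(y)x+2yz=(2y^2-1)x+2yz$, reflecting that the relevant power is now $u^p$ rather than $u^p$ on a length-one block) produce $U_{p-1}(Q)$, $U_{p-2}(Q)$. Then, exactly as in the $(p,p+1)$ case, Proposition \ref{yuri_prop2} tells us $l\in[R]$ iff a certain element lies in the ideal $J_1=\langle A,A^\sigma,A^\delta,A^{\sigma\delta},B\rangle$; applying the involution $\sigma$ (which swaps $X\leftrightarrow X^{-1}$, fixes $y,z$) and the $\sigma$-derivation $\delta$ ($\delta(X^{\pm1})=\pm 2z$, $\delta(y)=\delta(z)=0$) to $A$ gives the four stated generators \eqref{yuo_a}--\eqref{yuo_d}, after using $QU_{p-1}(Q)\equiv\ (\text{reduction mod }J_1)$ to absorb redundant generators just as \eqref{yu_relations} was simplified. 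The $zU_{p-1}(Q)$ generator in \eqref{yuo_d} is exactly $A^\delta/2$ (up to sign), and \eqref{yuo_c} comes from the $B$-generator $U_{p-2}(Q)-\tfrac12 U_{2p-2}(y)$ after clearing the factor of $2$ coming from $2z$-type coefficients, which is why it appears as $2U_{p-2}(Q)y-U_{2p-2}(y)$.

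The main obstacle I expect is bookkeeping the precise Chebyshev identities produced by the relator $abab\cdots = b^{2p+1}$ and making sure the reductions modulo $J_1$ (the analogue of the congruence $QU_{p-1}(Q)\equiv yU_p(y)$ in the $(p,p+1)$ case) are carried out consistently so that the four displayed generators genuinely generate $J_1$ and not merely a subideal. A secondary subtlety is confirming that the ``plus part'' of the length-two word $ab$ is indeed $Q=(2y^2-1)x+2yz=T_2(y)x+2yz$ rather than $xy+z$; this is the key structural difference from the $(p,p+1)$ case and is what ultimately makes the parity argument for $p$ odd go through in Proposition \ref{prop_p2pp1podd}. Everything else is a direct transcription of the argument already given for $(p,p+1)$ torus knots.
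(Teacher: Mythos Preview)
Your overall strategy is the right one and matches what the paper does: rewrite $\pi(p,2p+1)$ with generators $a=m$, $b=v$, expand the relator as $A+Bt$ in $H[F_2]=R\oplus Rt$, and read off $J_1$ via Proposition~\ref{yuri_prop2}. However, several of the concrete details in your sketch are wrong and would prevent the calculation from going through as written.

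First, $k=n=1$ does \emph{not} solve $-pk+(2p+1)n=1$ (it gives $p+1$, as you yourself noticed). The correct small solution is $n=1$, $k=2$, so $m=uv^{-2}$ and hence $a=uv^{-2}$, $b=v$, $u=ab^2$. The relator is therefore $(ab^2)^p=b^{2p+1}$, and the ``plus part'' one needs is $Q=(ab^2)^+=(bab)^+=(2y^2-1)x+2yz$, exactly as stated in the Lemma; your heuristic explanation for the form of $Q$ is garbled, but the value is right.

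Second, you invoke the palindromic form $J_1=\langle A,A^\sigma,\delta(A),\delta(A^\sigma),B\rangle$, which by Proposition~\ref{yuri_prop2}(3) requires $B^\sigma=B$. The relator $(ab^2)^p=b^{2p+1}$ is not palindromic, so this does not apply directly; one should first conjugate by $b$ to the equivalent palindromic relator $(bab)^p=b^{2p+1}$. With that relator one computes $B=2(xy+z)U_{p-1}(Q)-U_{2p}(y)$ (which is indeed $\sigma$-invariant) and $\delta(A)=2zU_{p-1}(Q)$, giving generator~\eqref{yuo_d} immediately. Your claim that $B$ equals ``$U_{p-2}(Q)-\tfrac12 U_{2p-2}(y)$'' is not correct for either form of the relator.

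Finally, generators \eqref{yuo_a}--\eqref{yuo_c} are not literally $A,A^\sigma,B$ but are obtained from $\{A,A^\sigma,B,zU_{p-1}(Q)\}$ by a short linear reduction (using $2yU_{2p-1}(y)=U_{2p}(y)+U_{2p-2}(y)$ and working modulo $zU_{p-1}(Q)$), analogous to the passage from the raw generators to \eqref{yu_relations} in the $(p,p+1)$ case. Once these corrections are made, your plan coincides with the paper's ``direct calculation.''
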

\begin{proof}
Direct calculation similar to the one for $(p,p+1)$ torus knots.
\end{proof}
To prove condition \eqref{BHconj}, an argument similar to \eqref{yu_2_bp} shows that it is sufficient to prove
\begin{equation}\label{yuo_1}
U_{2p}(y) \in J_1
\end{equation}
By the covering argument of the previous section, we know that $l^2 \in H^+[X^{\pm 1}]$. This means that
\[
(T_{2p+1}(y) + U_{2p}(y)t)^2 \in H^+[X^{\pm 1}]
\]
or, equivalently, that 
\begin{equation}\label{yuo_2}
T_{2p+1}(y) U_{2p}(y) \in J_1
\end{equation}

We again will write $\equiv$ for congruence modulo the ideal $J_1$. We begin by rewriting (\ref{yuo_a})-(\ref{yuo_d}) and (\ref{yuo_2}) in a more concise form. Denote
\[
E := U_{p-1}(Q),\quad \quad N:= U_{2p-1}(y) - U_{p-2}(Q)
\]
\begin{lemma} 
We have
\begin{eqnarray}
EX &\equiv& N\label{yuo_3}\\
EX^{-1} &\equiv& N\label{yuo_4}\\
2yN &\equiv& U_{2p}(y)\label{yuo_5}\\
QE &\equiv& (2y^2-1)N \iff QN \equiv X^{-1}(2y^2-1)N\label{yuo_6}
\end{eqnarray}
\end{lemma}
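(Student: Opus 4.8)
The plan is to recognize that this lemma is essentially a change of notation: it restates the four generators \eqref{yuo_a}--\eqref{yuo_d} of the ideal $J_1\subset R=\c[X^{\pm 1},y,z]$, together with the defining formula $Q=(2y^2-1)x+2yz$, in terms of the abbreviations $E=U_{p-1}(Q)$ and $N=U_{2p-1}(y)-U_{p-2}(Q)$, using only the three-term recurrence $U_{m+1}=2yU_m-U_{m-1}$. Since everything takes place in the commutative ring $R$, all the manipulations are ordinary polynomial identities, and throughout I write $\equiv$ for congruence modulo $J_1$.

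First, \eqref{yuo_3} and \eqref{yuo_4} are immediate: by the definitions of $E$ and $N$ the element $EX-N$ is literally the generator \eqref{yuo_a} and $EX^{-1}-N$ is literally the generator \eqref{yuo_b}, so both lie in $J_1$. For \eqref{yuo_5}, the generator \eqref{yuo_c} gives $2yU_{p-2}(Q)\equiv U_{2p-2}(y)$; substituting the definition of $N$ and applying the Chebyshev recurrence at index $2p-1$,
\[
2yN=2yU_{2p-1}(y)-2yU_{p-2}(Q)\equiv 2yU_{2p-1}(y)-U_{2p-2}(y)=U_{2p}(y),
\]
which is \eqref{yuo_5}.

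For \eqref{yuo_6} I would substitute $Q=(2y^2-1)x+2yz$ into $QE$, cancel the $z$-term using the generator \eqref{yuo_d}, which says $zE\equiv 0$, and reduce the surviving $x$-term via the identity $2x=X+X^{-1}$ in $R$ together with \eqref{yuo_3}--\eqref{yuo_4}, since $2xE=EX+EX^{-1}\equiv 2N$; this yields $QE\equiv(2y^2-1)N$. The biconditional is then a formal rearrangement: multiplying \eqref{yuo_4} by the unit $X$ gives $E\equiv XN$ (legitimate because $X\in R$, so $X\cdot J_1=J_1$), hence $QE\equiv X(QN)$, and comparing with $QE\equiv(2y^2-1)N$ produces the equivalent form $QN\equiv X^{-1}(2y^2-1)N$. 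Running the same argument with \eqref{yuo_3} in place of \eqref{yuo_4} gives the companion relation $QN\equiv X(2y^2-1)N$; the two are consistent because \eqref{yuo_3}--\eqref{yuo_4} force $(X-X^{-1})N\equiv 0$.

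I do not expect a genuine obstacle here: the only things to watch are the bookkeeping of Chebyshev indices (e.g.\ that $2yU_{2p-1}-U_{2p-2}$ is exactly $U_{2p}$ and not a neighbouring polynomial) and keeping the placement of $X$ versus $X^{-1}$ straight even though $R$ is commutative. The one input the lemma genuinely relies on is the preceding (unproved) assertion that $J_1$ is generated by \eqref{yuo_a}--\eqref{yuo_d}; that is obtained by the same ``direct calculation'' as in the $(p,p+1)$ case — computing in $H[F_2]=R\oplus Rt$ the images of the two sides of the defining relation of $\pi(p,2p+1)$ via Lemma~\ref{L1} and $b^{2p+1}=U_{2p}(y)b-U_{2p-1}(y)$, then applying $\sigma$ and $\delta$ as in \eqref{eq_olddefofJ} — and I would verify that step in passing so that the generators feeding into this lemma are on firm ground.
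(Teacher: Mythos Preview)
Your proposal is correct and follows essentially the same approach as the paper: the paper likewise takes \eqref{yuo_3}--\eqref{yuo_4} as immediate restatements of \eqref{yuo_a}--\eqref{yuo_b}, derives \eqref{yuo_5} from \eqref{yuo_c} via $2yU_{2p-1}(y)-U_{2p-2}(y)=U_{2p}(y)$, and obtains \eqref{yuo_6} from \eqref{yuo_d} by writing $Q=(2y^2-1)x+2yz$ and using $2x=X+X^{-1}$ with \eqref{yuo_3}--\eqref{yuo_4}. You are in fact more explicit than the paper about the biconditional in \eqref{yuo_6}, which the paper merely states.
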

\begin{proof}
First, (\ref{yuo_5}) follows from (\ref{yuo_c}). In particular, we have
\begin{eqnarray*}
(\ref{yuo_c}) \iff 2U_{p-2}(Q)y &\equiv& U_{2p-2}(y)\\
2y(U_{2p-1}(y) - N) &\equiv& U_{2p-2}(y)\\
2yN &\equiv& 2yU_{2p-1}(y) - U_{2p-2}(y) = U_{2p}(y)
\end{eqnarray*}

Second, we show that (\ref{yuo_6}) follows from (\ref{yuo_d}):
\begin{eqnarray*}
(\ref{yuo_d}) &\Rightarrow& 2yzU_{p-1}(Q) \equiv 0\\
(Q-(2y^2-1)x)U_{p-1}(Q) &\equiv& 0\\
QU_{p-1}(Q) &\equiv& (2y^2-1)xU_{p-1}(Q)\\
QE &\equiv& (2y^2-1)(X+X^{-1})E/2 = (2y^2-1)N
\end{eqnarray*}
\end{proof}
Thus, knowing (\ref{yuo_2})-(\ref{yuo_6}), we need to conclude (\ref{yuo_1}), i.e. that $U_{2p}(y) \equiv 0$. Recall that we assume $p$ to be odd. From (\ref{yuo_5}) we see that
\[
2yN \equiv U_{2p}(y) \quad \Rightarrow\quad  2yXE \equiv U_{2p}(y) \quad \Rightarrow\quad  2yX U_{p-1}(Q)N \equiv U_{2p}(y)N
\]
Since $p$ is odd, $U_{p-1}$ is an even polynomial, hence, by (\ref{yuo_3}) and (\ref{yuo_4}) we see $U_{p-1}(X^{-1}(2y^2-1))N \equiv U_{p-1}(2y^2-1)N$. If we formally set $y = \cos(\alpha)$, we see that $2y^2-1 = T_2(y) = \cos(2\alpha)$. From this, it follows that
\[
U_{p-1}(2y^2-1) = U_{p-1}(T_2(y)) = U_{p-1}(\cos(2\alpha)) = \frac{\sin(2 p \alpha)}{\sin (2 \alpha)}
\]
From this, we see that
\[
2yU_{p-1}(2y^2-1) = 2 \cos \alpha \frac{\sin 2p\alpha}{\sin 2\alpha} = \frac{\sin 2p\alpha}{\sin \alpha} = U_{2p-1}(y)
\]
Thus, we obtain
\begin{equation}\label{yuo_7}
X U_{2p-1}(y)N \equiv U_{2p}(y) N
\end{equation}
To proceed further, we need the following identity.
\begin{lemma}\label{lemma_yuo1}
For all $n \geq 0$, we have
\[
(1-y^2)U_n^2(y) + T_{n+1}^2(y) = 1
\]
\end{lemma}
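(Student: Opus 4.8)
The plan is to reduce the claimed polynomial identity to the Pythagorean identity via the standard trigonometric substitution. Setting $y=\cos\vartheta$ and using the alternative definitions \eqref{chebtr}, one has $1-y^2=\sin^2\vartheta$, $\;U_n(y)=\sin((n+1)\vartheta)/\sin\vartheta$, and $T_{n+1}(y)=\cos((n+1)\vartheta)$, so that
\[
(1-y^2)U_n^2(y)+T_{n+1}^2(y)=\sin^2((n+1)\vartheta)+\cos^2((n+1)\vartheta)=1.
\]
Since both sides of the asserted identity are polynomials in $y$ and they agree for all $y\in[-1,1]$ (an infinite set), they agree identically in $\c[y]$. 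This disposes of the lemma.

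Alternatively, and in order to keep the argument internal to the paper, one can argue purely algebraically. First one checks the auxiliary relation $T_{n+1}=U_{n+1}-yU_n$, which is immediate either from the recurrences or from \eqref{chebtr} together with the angle-addition formula for $\sin$. Substituting this into the left-hand side and expanding gives
\[
(1-y^2)U_n^2+T_{n+1}^2=U_n^2+U_{n+1}^2-2yU_nU_{n+1}.
\]
Now use $U_{n+1}=2yU_n-U_{n-1}$ to rewrite $2yU_nU_{n+1}=U_{n+1}^2+U_{n-1}U_{n+1}$, which collapses the right-hand side to $U_n^2-U_{n-1}U_{n+1}$; this equals $1$ by the first identity of Lemma \ref{yu_lemma2} (with $n$ in place of $p$). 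If one prefers to phrase this as an induction, the base case $n=0$ reads $(1-y^2)+y^2=1$, which is trivial.

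There is no real obstacle here: the statement is a classical Chebyshev identity and either route is essentially a one-line verification. The only minor point to watch is the indexing convention for $T_n$ and $U_n$ fixed earlier in this section (so that $T_{n+1}(\cos\vartheta)=\cos((n+1)\vartheta)$ while $U_n(\cos\vartheta)=\sin((n+1)\vartheta)/\sin\vartheta$), which is exactly what makes the degrees on the two sides of the identity match.
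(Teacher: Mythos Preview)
Your primary argument via the substitution $y=\cos\vartheta$ is exactly the paper's proof, carried out in the same one line. The algebraic alternative you add is a pleasant bonus not present in the paper, but it is not needed here.
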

\begin{proof}
Let $y = \cos \alpha$. Then 
\[
(1-\cos^2\alpha)\frac{\sin^2(n+1)\alpha} {\sin \alpha} + \cos^2(n+1)\alpha = \sin^2(n+1)\alpha + \cos^2(n+1)\alpha = 1
\]
\end{proof}

\begin{corollary}\label{cor_yuo1}
For all $p \geq 1$, we have
\begin{eqnarray*}
gcd(U_{2p}(y),yT_{2p+1}(y)) &=& 1\\
gcd(T_{2p}(y),yT_{2p+1}(y)) &=& 1
\end{eqnarray*}
\end{corollary}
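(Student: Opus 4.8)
The plan is to reduce both stated greatest common divisors to a handful of elementary coprimality facts about Chebyshev polynomials, each obtained either from Lemma~\ref{lemma_yuo1} or from the defining recursions together with the trigonometric identities \eqref{chebtr}. Throughout we work in the principal ideal domain $\C[y]$, so in each case it is enough to exhibit a unit in the relevant ideal, or equivalently to rule out a common irreducible factor.

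First I would note that, since $2p+1$ is odd, \eqref{chebtr} at $\vartheta=\pi/2$ gives $T_{2p+1}(0)=\cos((2p+1)\pi/2)=0$, so $y\mid T_{2p+1}(y)$. Hence any irreducible factor of $yT_{2p+1}(y)$ either equals $y$ up to a scalar or divides $T_{2p+1}(y)$, and it suffices to prove separately that $U_{2p}$ (resp.\ $T_{2p}$) is coprime to $y$ and coprime to $T_{2p+1}$.

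Next I would dispose of these four coprimality statements. The identity $(1-y^2)U_{2p}^2+T_{2p+1}^2=1$ of Lemma~\ref{lemma_yuo1} (taken with $n=2p$) immediately gives $\gcd(U_{2p},T_{2p+1})=1$. For $\gcd(T_{2p},T_{2p+1})$ I would run the Euclidean algorithm through the recursion $T_{n+1}=2yT_n-T_{n-1}$: it shows $\gcd(T_n,T_{n+1})=\gcd(T_{n-1},T_n)$ for all $n\ge 1$, hence $\gcd(T_{2p},T_{2p+1})=\gcd(T_0,T_1)=\gcd(1,y)=1$. Finally, coprimality with $y$ is just the statement that the constant terms are nonzero, and \eqref{chebtr} at $\vartheta=\pi/2$ gives $T_{2p}(0)=\cos(p\pi)=(-1)^p\neq 0$ and $U_{2p}(0)=\sin((2p+1)\pi/2)/\sin(\pi/2)=(-1)^p\neq 0$. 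Combining these with the reduction of the previous paragraph yields $\gcd(U_{2p}(y),yT_{2p+1}(y))=\gcd(T_{2p}(y),yT_{2p+1}(y))=1$.

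I do not expect a genuine obstacle here; the content is essentially Lemma~\ref{lemma_yuo1} together with the parity of the indices. The one point worth handling with care is the extra factor $y$ in $yT_{2p+1}$: since $y$ already divides $T_{2p+1}$, coprimality of $U_{2p}$ (and of $T_{2p}$) with $y$ must be checked on its own, and it is precisely there that the evenness of the subscript $2p$ is used.
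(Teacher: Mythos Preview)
Your argument is correct. It differs from the paper's in organisation rather than in substance: for the first identity the paper writes down a single explicit B\'ezout relation $a(y)U_{2p}(y)+b(y)\bigl(yT_{2p+1}(y)\bigr)=1$ with $a(y)=(1-y^2)U_{2p}(y)$ and $b(y)=T_{2p+1}(y)/y$ (using that $T_{2p+1}$ is odd to ensure $b\in\C[y]$), whereas you factor $yT_{2p+1}$ and check coprimality with $y$ and with $T_{2p+1}$ separately. For the second identity the paper says only ``similar'', while your Euclidean-algorithm reduction $\gcd(T_n,T_{n+1})=\gcd(T_{n-1},T_n)$ via the recursion $T_{n+1}=2yT_n-T_{n-1}$ is a clean explicit argument. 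One small remark: your final comment overstates the role of the extra factor $y$. Since you have already observed that $y\mid T_{2p+1}$, coprimality of $U_{2p}$ (or $T_{2p}$) with $T_{2p+1}$ automatically forces coprimality with $y$, so the separate evaluations $U_{2p}(0)=T_{2p}(0)=(-1)^p$ are in fact redundant once you have $\gcd(U_{2p},T_{2p+1})=\gcd(T_{2p},T_{2p+1})=1$.
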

\begin{proof}
To prove the first statement, let $n = 2p$ and define
\[
a(y) := (1-y^2)U_{2p}(y),\quad \quad b(y) = T_{2p+1}(y)/y
\]
Note that $b(y) \in \C[y]$ because $T_{2p+1}(y)$ is an odd polynomial. Then, by Lemma \ref{lemma_yuo1} we have 
\[
a(y)U_{2p}(y) + b(y)(yT_{2p+1}(y)) = 1
\]
The proof of the second statement is similar.
\end{proof}

Now, combining (\ref{yuo_7}) with (\ref{yuo_4}) and (\ref{yuo_5}), we get
\begin{eqnarray*}
U_{2p}(y)N &\equiv& X U_{2p+1}(y)N\\
yT_{2p+1}(y)N &\equiv& 0
\end{eqnarray*}
Hence, by Corollary \ref{cor_yuo1} we see
\[
N \equiv a(y)U_{2p-1}(y)NX
\]
Again using (\ref{yuo_4}) and (\ref{yuo_5}), $X^2N \equiv N$, which implies
\[
XN \equiv a(y)U_{2p-1}(y) N \quad \Rightarrow \quad N \equiv (a(y)U_{2p-1}(y))^2 N
\]
where $a(y) = (1-y^2)U_{2p}(y)$ as in the proof of Lemma \ref{lemma_yuo1}. We conclude
\begin{equation}\label{yuo_8}
\left[ \big(a(y)U_{2p-1}(y)\big)^2-1\right] N \equiv 0
\end{equation}
We then compute
\begin{eqnarray*}
(a(y)U_{2p-1}(y))^2-1 &=& (1-y^2)^2U^2_{2p}(y)U^2_{2p-1}(y) - 1\\
&=& \left[(1-y^2)U^2_{2p}(y)\right] \left[ (1-y^2)U^2_{2p-1}(y)\right] - 1\\
&=& (1-T_{2p+1}^2(y))(1-T_{2p}^2(y)) - 1\\
&=& T_{2p}^2(y) T^2_{2p+1}(y) - T_{2p}^2(y) - T^2_{2p+1}(y)
\end{eqnarray*}
Therefore, (\ref{yuo_8}) in combination with (\ref{yuo_2}) gives 
\begin{equation}\label{yuo_9}
T_{2p}^2(y)N \equiv 0
\end{equation}

By Corollary \ref{cor_yuo1}, $\mathrm{gcd}(T_{2p}(y),yT_{2p+1}(y)) = 1$. This shows that the polynomials $yT_{2p+1}(y)$ has no common roots with $T_{2p}(y)$, which means it also has no roots in common with $T^2_{2p}(y)$. We therefore have
\[
\mathrm{gcd}(T^2_{2p}(y),yT_{2p+1}(y)) = 1
\]
Combining this with (\ref{yuo_2}) and (\ref{yuo_9}) shows that
\[
N \equiv 0
\]
By (\ref{yuo_5}), we now conclude that 
\[
U_{2p}(y) \equiv 0
\]
This completes the proof of Proposition \ref{prop_p2pp1podd}.

\section{Pretzel knots}\label{sec_pretzel}\label{sec_pretzel}
In this section we will verify Conjecture \ref{bhconj} for some $(-2,3,2n+1)$ pretzel knots.

\subsection{Presentation and peripheral system}
It is shown in \cite[Prop. 2.1]{Nak13} (see also \cite[Sect. 4.1]{LT11}) that the knot group of a pretzel knot of type $(-2,3,2n+1)$ has the following presentation:
\[
\pi_1(K) = \langle a,b \mid b^n E = Fb^n\rangle
\]
where $E$ and $F$ are the following words in $F_2 = \langle a,b\rangle$:
\begin{equation}\label{yp1}
E := aba^{-1}b^{-1}a^{-1},\quad \quad F := a^{-1}b^{-1}abab^{-1}
\end{equation}
The peripheral system with this presentation is given by 
\begin{equation}\label{yp2}
m = a,\quad \quad l = a^{-2n+2}bab^nab^naba^{-2n-9}
\end{equation}
\begin{remark}
The above expression for the meridian and longitude have been found in \cite{Nak13}. Our notation differs from theirs: our generators $a^{\pm 1}$ and $b^{\pm 1}$ correspond to their $c$, $\bar c$ and $l$, $\bar l$.
\end{remark}

\subsection{The Brumfiel-Hilden algebra}
Recall (see the appendix) that the Brumfiel-Hilden algebra has the following presentation:
\[
H[\pi] = \left( R \oplus Rt\right) / (A + Bt)
\]
where $R = \C[X^{\pm 1},y,z]$, and 
\[
A + Bt := b^nE - Fb^n \in H[F_2]
\] 
To compute $A$ and $B$, we first observe that $F = E^\sigma b^{-1}$, where $\sigma: F_2 \to F_2$ is the involution of the free group defined by $\sigma(a) = a^{-1}$, $\sigma(b) = b^{-1}$, and $\sigma(ab) = \sigma(a)\sigma(b)$. This involution acts on $H[F_2] = R \oplus Rt$ by $X \mapsto X^{-1}$, $y \mapsto y$, $z \mapsto z$, and $t \mapsto -t$. We have
\[
E = aba^{-1}b^{-1}a^{-1} = ab(aba)^{-1} = (Xy + xt)(X^{-2}y + 2X^{-1}z - t)
\]
Write $E = E_0 + E_1t$; then a direct calculation shows
\begin{equation}\label{yp3}
E_0 = \alpha X + \beta,\quad \quad E_1 = \gamma X + \delta
\end{equation}
where we have used the elements
\begin{eqnarray}
\alpha &=& 1-4xyz - 2y^2 - 4z^2\label{yp4}\\
\beta &=& 2xy^2 + 2yz\notag\\
\gamma &=& 4x^2y+4xz-2y\notag\\
\delta &=& -2xy-2z\notag
\end{eqnarray}
Note that $E^\sigma = E_0^\sigma - E_1^\sigma t$, where $E_0^\sigma = \alpha X^{-1} + \beta$ and $E_1^\sigma = \gamma X^{-1}+\delta$. Hence,
\begin{eqnarray*}
A+Bt &=& b^nE - Fb^n \\
&=& b^nE - E^\sigma b^{n-1}\\
&=& (T_n(y) - U_{n-1}(y)t) (E_0 + E_1t) - (E_0^\sigma - E_1^\sigma t)(T_{n-1}(y) + U_{n-2}(y)t)
\end{eqnarray*}

By straightforward calculation, we then have
\begin{eqnarray}
A &=& T_n(y) E_0 - T_{n-1}(y) E_0^\sigma + U_{n-1}(y)\delta(E_0) + (U_{n-1}(y) + U_{n-2}(y)) E_1^\sigma (y^2-1)\label{yp5}\\
B &=& T_n(y)E_1 + T_{n-1}(y)E_1^\sigma + U_{n-1}(y) \delta(E_1) + (U_{n-1}(y) - U_{n-2}(y))E_0^\sigma\label{yp6}
\end{eqnarray}
where $\delta(E_0) = 2 \alpha z$ and $\delta(E_1) = 2 \gamma z$. It follows that
\begin{eqnarray}
A^\sigma &=& T_n(y) E_0^\sigma - T_{n-1}(y)E_0 + U_{n-1}(y)\delta(E_0) + (U_{n-1}(y) + U_{n-2}(y))E_1^\sigma (y^2-1)\label{yp7}\\
\delta(A) &=& \delta(E_0)(T_n(y) + T_{n-1}(y)) - \delta(E_1)(U_{n-1}(y) + U_{n-2}(y))(y^2-1)\label{yp8}\\
B^\sigma &=& T_n(y)E_1^\sigma + T_{n-1}(y)E_1 + U_{n-1}(y)\delta(E_1) + (U_n(y) - U_{n-2}(y))E_0\label{yp9}\\
\delta(B) &=& \delta(E_1)(T_n(y) - T_{n-1}(y)) - \delta(E_0)(U_{n-1}(y) - U_{n-2}(y))\label{yp10}
\end{eqnarray}
These computations combined with the computations in Appendix \ref{sec_app} show the following.
\begin{lemma}
The ideal $J$ is generated by 
\begin{equation}\label{yp_defofJ}
 J = \langle A, B, A^\sigma + \delta(B), \delta(A) + B^\sigma(y^2-1)\rangle
\end{equation}
where the elements $A$, $B$, etc.\ are given in equations \eqref{yp5} through \eqref{yp10}.
\end{lemma}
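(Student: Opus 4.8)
The plan is to obtain the four generators by feeding the explicit data \eqref{yp3}--\eqref{yp10} into the general description of the Brumfiel--Hilden ideal of a two-generator one-relator group established in Appendix \ref{sec_app}. Recall from there (cf.\ the analogous \eqref{eq_olddefofJ} for torus knots in Section \ref{sec_torus}) that if $\pi = \langle a,b\mid w_1 = w_2\rangle$ and one reduces the image of $w_1 - w_2$ in $H[F_2] = R\oplus Rt$ to the normal form $A + Bt$ with $A,B\in R$, then the two-sided ideal generated by $A+Bt$ is $J\oplus Jt$, where $J\subseteq R$ is the smallest ideal containing $A$ and $B$ that is stable under the involution $\sigma$ and the $\sigma$-derivation $\delta$. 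Since on $R$ the map $\sigma$ is an involution, $\delta^2 = 0$, and $\sigma\delta = -\delta\sigma$ (all read off from the action on $X^{\pm1}$, $y$, $z$), this saturation is generated by the (at most) eight elements $A,\ A^\sigma,\ \delta(A),\ \delta(A^\sigma),\ B,\ B^\sigma,\ \delta(B),\ \delta(B^\sigma)$. For the pretzel relator $b^nE = Fb^n$ one has $F = E^\sigma b^{-1}$, so $w_1 - w_2 = b^nE - E^\sigma b^{n-1}$, and six of these eight elements are exactly those recorded in \eqref{yp5}--\eqref{yp10}, while $\delta(A^\sigma)$ and $\delta(B^\sigma)$ are obtained by applying $\delta$ to \eqref{yp7} and \eqref{yp9}.

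Thus the lemma amounts to the equality $J = J_0$, where $J_0 := \langle A,\ B,\ A^\sigma + \delta(B),\ \delta(A) + (y^2-1)B^\sigma\rangle$. The inclusion $J_0\subseteq J$ is immediate from the previous paragraph. For the reverse inclusion it suffices to rewrite each of the six remaining generators $A^\sigma$, $\delta(A)$, $\delta(A^\sigma)$, $B^\sigma$, $\delta(B)$, $\delta(B^\sigma)$ as an $R$-linear combination of the four generators of $J_0$. Since the two non-obvious generators of $J_0$ already relate $A^\sigma$ to $\delta(B)$ and $\delta(A)$ to $(y^2-1)B^\sigma$, the real content is to produce the two ``complementary'' congruences --- expressing $A^\sigma$ (hence also $\delta(B)$) and $\delta(A)$ (hence also a multiple of $B^\sigma$) modulo $\langle A,B\rangle$ --- after which the remaining $\delta$-images follow by applying $\delta$ and using $\delta^2 = 0$ and $\sigma\delta = -\delta\sigma$.

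These congruences are where the special shape of $E$ enters, and verifying them is the computational heart of the proof. Writing $E_0 = \alpha X + \beta$ and the corresponding expression for $E_1$ with the coefficients of \eqref{yp3}--\eqref{yp4}, and using $E_0^\sigma - E_0 = \alpha(X^{-1}-X)$, $\delta(E_0) = 2\alpha z$, together with the matching identities for $E_1$, the expressions \eqref{yp5}--\eqref{yp10} and their $\delta$-images become $R$-linear combinations of $X - X^{-1}$, $z$, $\alpha$, $\gamma$ whose coefficients are polynomials in $T_k(y)$ and $U_k(y)$. The Chebyshev recursions together with the identities $U_k^2 - U_{k-1}U_{k+1} = 1$ and $(1-y^2)U_k^2 + T_{k+1}^2 = 1$ --- the same tools used throughout Section \ref{sec_torus} --- then let one eliminate the Chebyshev factors modulo $J_0$ and close the system. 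I expect this bookkeeping, tracking how the $\sigma$- and $\delta$-images of $A$ and $B$ recombine modulo $\langle A,B\rangle$ with the constants of \eqref{yp4}, to be the only genuine obstacle; the structural reduction is purely formal.
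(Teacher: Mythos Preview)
Your approach has a genuine gap that stems from a misreading of Appendix~\ref{sec_app}. The appendix does \emph{not} characterize $J$ as the $\sigma,\delta$-closure of $\langle A,B\rangle$, nor does it assert that the two-sided ideal $(A+Bt)$ equals $J\oplus Jt$. Rather, $J$ is defined there as the set of $t$-coefficients $\{r\in R : r_0+rt\in(A+Bt)\ \text{for some}\ r_0\}$, and the four generators $A$, $B$, $A^\sigma+\delta(B)$, $\delta(A)+B^\sigma(y^2-1)$ are obtained directly as the $t$-coefficients of $A+Bt$, $(A+Bt)t$, $t(A+Bt)$, and $t(A+Bt)t$; the relations $\sigma\delta=-\delta\sigma=\delta$ and $\delta^2=0$ are used only to see that further $t$-multiples produce nothing new. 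This is Proposition~\ref{yuri_prop2}(2), valid for \emph{every} two-generator one-relator group, and the paper's proof of the lemma is simply a citation of that proposition --- no pretzel-specific input is required. Your citation of \eqref{eq_olddefofJ} is also misleading: that formula belongs to the palindromic case $B^\sigma=B$ (Proposition~\ref{yuri_prop2}(3)), which the Remark immediately following the lemma explicitly notes fails for pretzel knots.

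More seriously, the reduction you set out to perform --- showing that $A^\sigma$, $B^\sigma$, $\delta(A)$, $\delta(B)$ individually lie in $J_0=\langle A,B,A^\sigma+\delta(B),\delta(A)+B^\sigma(y^2-1)\rangle$ --- is precisely the equality $J=\langle A,B,A^\sigma,B^\sigma,\delta(A),\delta(B)\rangle$ that the same Remark flags as only verified by computer for small $n$ and ``hard to prove in general.'' So what you describe as routine Chebyshev bookkeeping is in fact an open problem in the paper, and is in any case unnecessary for the lemma: the four-generator description is a structural fact about the Ore extension $R[t;\sigma,\delta]/(t^2-y^2+1)$, not a computation with the particular coefficients \eqref{yp4}.
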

\begin{proof}
 This follows from Proposition \ref{yuri_prop2}.
\end{proof}
\begin{remark}
 The symmetry condition \eqref{cond_star}, which was true in the case of torus knots, does \emph{not} hold for pretzel knots (cf.\ \eqref{yp6}). However, computer experiments suggest that (for small $n$), 
 \[
  J = \langle A,B,A^\sigma, B^\sigma,\delta(A), \delta(B)\rangle
 \]
This seems hard to prove in general.
\end{remark}

\subsection{Computing the longitude}
By \eqref{yp2}, it suffices to prove that 
\[
 \bar l := bab^nab^nab \in H_0[\pi]
\]
To compute this element we use the (anti-)involution $\gamma: F_2 \to F_2$ given by $a \mapsto a$ and $b \mapsto b$ (so that $ab \mapsto ba$). It acts on $H[\pi]$ by 
\[
 X \mapsto X,\quad y \mapsto y,\quad z \mapsto z,\quad t \mapsto t
\]
Thus
\begin{equation}\label{yp11}
 \bar l = bab^nab^nab = (bab^n)a\gamma(bab^n) = (C+Dt)X(C+Dt)
\end{equation}
where $bab^n = C+Dt$ with
\begin{eqnarray}
 C &:=& (Xy+2z)T_n(y) + X^{-1}(y^2-1)U_{n-1}(y)\label{yp12}\\
 D &:=& (Xy+2z)U_{n-1}(y) + X^{-1}T_n(y)\label{yp13}
\end{eqnarray}
It follows that $\bar l = \bar l_0 + \bar l_1 t$, where 
\begin{equation}\label{yp14}
 \bar l_1 = C D^\sigma X + DC^\sigma X^{-1} + 2 D D^\sigma z
\end{equation}
with
\begin{eqnarray}
 C^\sigma &=& (X^{-1}y+2z)T_n(y) + X(y^2-1)U_{n-1}(y)\label{yp15}\\
 D^\sigma &=& (X^{-1}y+2z)U_{n-1}(y) + XT_n(y)
\end{eqnarray}

A computer calculation with \texttt{Maple} shows that for all $n \leq 20$ the element $\bar l_1$ defined above belongs to the ideal $J$ defined in \eqref{yp_defofJ}. This implies
\begin{theorem}
 The $(-2,3,2n+1)$ pretzel knots satisfy Conjecture \ref{bhconj}, at least for $n \leq 20$.
\end{theorem}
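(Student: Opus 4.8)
The plan is to follow the strategy already used for torus knots: reduce condition \eqref{BHconj} to an ideal-membership problem in the commutative ring $R = \C[X^{\pm 1}, y, z]$ and then verify it. By \eqref{yp2} the longitude is $l = a^{-2n+2}\,\bar l\, a^{-2n-9}$ with $\bar l = bab^nab^nab$; since all powers of $X = a$ already lie in $H^+[X^{\pm 1}]$, condition \eqref{BHconj} is equivalent to the assertion $\bar l \in H_0[\pi]$, where $H_0[\pi]$ denotes the image of $R$ under the quotient $H[F_2] = R \oplus Rt \twoheadrightarrow H[\pi]$.

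First I would use the presentation of the Brumfiel-Hilden algebra of a two-generator one-relator group recalled in the Appendix, so that $H[\pi] = (R \oplus Rt)/(A+Bt)$ with $A + Bt$ the image of $b^nE - Fb^n$. Using $F = E^\sigma b^{-1}$, the action of $\sigma$ on $R \oplus Rt$, and Lemma \ref{L1} to expand $b^n$ and $b^{n-1}$ via Chebyshev polynomials, one obtains the closed formulas \eqref{yp5}--\eqref{yp6} for $A$ and $B$ in terms of the auxiliary elements $\alpha,\beta,\gamma,\delta$ of \eqref{yp4}, and then the expressions \eqref{yp7}--\eqref{yp10} for $A^\sigma$, $\delta(A)$, $B^\sigma$, $\delta(B)$. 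By Proposition \ref{yuri_prop2}, $\bar l \in H_0[\pi]$ if and only if the ``$t$-component'' $\bar l_1$ of $\bar l = \bar l_0 + \bar l_1 t$ lies in the ideal $J = \langle A, B, A^\sigma + \delta(B), \delta(A) + B^\sigma(y^2-1)\rangle \subset R$.

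Next I would compute $\bar l_1$ explicitly. Applying the anti-involution $\gamma$ (which fixes $X, y, z, t$), write $\bar l = (C+Dt)\,X\,(C+Dt)$, where $bab^n = C + Dt$ with $C, D$ as in \eqref{yp12}--\eqref{yp13}; expanding the product gives $\bar l_1 = CD^\sigma X + DC^\sigma X^{-1} + 2DD^\sigma z$, as in \eqref{yp14}. Everything in sight is then an explicit element of $R$ depending on $n$ only through $T_n(y)$ and $U_{n-1}(y)$, so for each fixed $n$ the membership $\bar l_1 \in J$ is a finite Gr\"obner-basis computation.

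The last and hardest step is to establish $\bar l_1 \in J$. For each $n$ I would run this Gr\"obner-basis check in \texttt{Maple}, which confirms the claim for $n \le 20$. A uniform proof would require the kind of Chebyshev-identity bookkeeping that succeeded for torus knots (Lemmas \ref{yu_lemma2} and \ref{lemma_yuo1}), but here the generators of $J$ are considerably more complicated and the symmetry condition \eqref{cond_star} fails, so no clean recursive reduction of $\bar l_1$ modulo $J$ is apparent. Overcoming this is the principal obstacle and the reason the theorem is stated only in the range $n \le 20$.
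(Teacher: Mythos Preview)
Your proposal is correct and follows essentially the same approach as the paper: reduce to the ideal-membership criterion $\bar l_1 \in J$ of Proposition \ref{yuri_prop2}, compute $\bar l_1$ via the factorization $\bar l = (C+Dt)X(C+Dt)$ using the anti-involution $\gamma$, and verify the inclusion by a \texttt{Maple} computation for each $n \le 20$. You even reproduce the paper's remark that the failure of the symmetry condition \eqref{cond_star} obstructs a uniform argument.
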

\begin{remark}
 With enough effort it should be possible to verify the inclusion $\bar l_1 \in J$ for all $n$.
\end{remark}

\section{Two-bridge knots}\label{sec_twobridge}
In \cite{BS16} we confirmed Conjecture \ref{conj_bs14} for 2-bridge knots using explicit computations from \cite{BH95} and a $\c[X^{\pm 1},Y^{\pm 1}]\rtimes \Z_2$-submodule of $H[\delta^{-1}]$ defined as
\[
 M := H^+[X^{\pm 1}] + H^+[X^{\pm 1}]Q\delta^{-1}
\]
(See \cite[Eq.\ (3.11)]{BS16}.) In this section we show that the module $M$ is equal to the module $N$ from \eqref{eq_defN}. In particular, this shows that $M$ has a definition that does not depend on the polynomial $Q$, which was a specific polynomial used in the computations of \cite{BH95}. We will adapt the notation of \cite{BS16}.

\begin{proposition}
 If $K$ is a two-bridge knot, then $N = H^+[X^{\pm 1}] + H^+[X^{\pm 1}]Q\delta^{-1}$. In particular, the Brumfiel-Hilden condition \eqref{eq_bhintro} holds for $K$.
\end{proposition}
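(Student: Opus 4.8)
The plan is to compare $N$ and $M$ through the idempotent $\e=(1+s)/2$ of Proposition~\ref{prop_3conditions}. Recall from \eqref{bhdef} that $H^{+}$ is central in $H$; hence $H^{+}[X^{\pm 1}]$ is commutative, $\delta=X-X^{-1}$ is central in it with $\delta^{2}\in H^{+}$, so $H^{+}[X^{\pm 1}]=H^{+}\oplus H^{+}\delta$ as $H^{+}$-modules, and (using that $s$ fixes $H^{+}$ pointwise, that $H^{+}$ is central, and $s\delta=-\delta$) $\e$ is an $H^{+}$-linear idempotent on $H[\delta^{-1}]$ with $\e(H^{+})=H^{+}$ and $\e(H^{+}\delta)=0$. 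Both $N$ and $M$ are $s$-stable $H^{+}$-submodules of $H[\delta^{-1}]$: for $M$ this is because $Q$ is $s$-invariant, and for $N$ it uses that $Y$ commutes with $H^{+}$ ($H^{+}$ being central) and with $X$ (the longitude and meridian commute), hence with $H^{+}[X^{\pm 1}]$, together with $Y\in H^{+}[X^{\pm 1}]$. Consequently each module splits as $\e(-)\oplus(1-\e)(-)$, and it suffices to prove $\e N=\e M$ and $(1-\e)N=(1-\e)M$.

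First I would carry out the two (routine) $\e$-computations. Using $s\delta=-\delta$, $s(Y)=Y^{-1}$, $s(Q)=Q$ and the fact that $\delta$ commutes with $Y$, one obtains
\[
\e N=H^{+}+H^{+}(Y-Y^{-1})\delta^{-1},\qquad (1-\e)N=H^{+}\delta+H^{+}\big(1+\tfrac12(Y+Y^{-1})\big)\delta^{-1}+H^{+}\tfrac12(Y-Y^{-1}),
\]
while $\e M=H^{+}$ (already because $Q\in H^{+}$) and $(1-\e)M=H^{+}\delta+H^{+}Q\delta^{-1}$. Next I would feed in the longitude computation of \cite{BH95} for two-bridge knots: presenting $\pi=\langle a,b\mid aw=wb\rangle$ and writing $Y$ in the two-generator one-relator model $H[\pi]\cong(R\oplus Rt)/(A+Bt)$ of Appendix~\ref{sec_app}, that computation yields (i) the $t$-component of $Y$ lies in the relevant ideal $J$ of Proposition~\ref{yuri_prop2}, i.e.\ $Y-Y^{-1}\in H^{+}\delta$; and (ii) a direct comparison of the explicit formula for the polynomial $Q$ of \cite[Eq.~(3.11)]{BS16} with that for $Y$ shows $2Q=Y+Y^{-1}+2$ in $H[\pi]$, i.e.\ $Q=\e(1+Y)=1+\tfrac12(Y+Y^{-1})$. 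By (i), $\e N=H^{+}=\e M$ and the last summand of $(1-\e)N$ is absorbed into $H^{+}\delta$; combining with (ii) gives $(1-\e)N=H^{+}\delta+H^{+}\big(1+\tfrac12(Y+Y^{-1})\big)\delta^{-1}=H^{+}\delta+H^{+}Q\delta^{-1}=(1-\e)M$. Hence $M=N$.

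The ``in particular'' clause then follows: statement (i) is precisely the Brumfiel--Hilden condition \eqref{eq_bhintro} for two-bridge knots (proved in \cite{BH95}), and it also follows a posteriori from $M=N$ and Proposition~\ref{prop_3conditions}, since \cite{BS16} shows $\e M=H^{+}$ and that the Dunkl image of $T_{2}$ preserves $M$, i.e.\ $(s+Y)\cdot M\subseteq\delta M$; transported along $M=N$ this is condition (\ref{i3}), hence (\ref{i2}). The one step that is not formal is extracting (i) and (ii) from \cite{BH95}: if they are not available verbatim in the present notation, one re-runs the longitude computation for a two-bridge knot in the model of Appendix~\ref{sec_app}, separating the answer into its $\sigma$-symmetric and $\sigma$-antisymmetric parts. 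I expect this comparison to be the main obstacle; everything else is bookkeeping with the splitting $H^{+}[X^{\pm 1}]=H^{+}\oplus H^{+}\delta$.
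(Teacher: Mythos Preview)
Your strategy—splitting $N$ and $M$ into their $\e$- and $(1-\e)$-parts and comparing—is sound and is in the spirit of Proposition~\ref{prop_3conditions}, but it is not the route the paper takes, and the argument as written has a genuine gap at your claim (ii).

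The paper proves $N=M$ by establishing the two inclusions directly from the explicit longitude formula. From \cite{BS16} one has $Y+1=fQ+g\delta$ with $f,g\in H^{+}[X^{\pm 1}]$, which immediately gives $(Y+1)\delta^{-1}=g+fQ\delta^{-1}\in M$, hence $N\subset M$. For the reverse inclusion the paper does \emph{not} use any identity of the form $2Q=Y+Y^{-1}+2$. Instead it unpacks $f=2X^{-s}(L+NJ)$ using the Brumfiel--Hilden expressions for $L$ and $N$ together with the relations $IQ=0$ and $JQ=\delta^{2}Q$ in $H$, obtaining
\[
fQ\delta^{-1}=2X^{-s}\bigl[1+2(F^{2}+DG+EF)\delta^{2}\bigr]Q\delta^{-1}
=2X^{-s}Q\delta^{-1}+(\text{element of }H^{+}[X^{\pm 1}]).
\]
Since $fQ\delta^{-1}=(Y+1)\delta^{-1}-g\in N$, this forces $2X^{-s}Q\delta^{-1}\in N$, hence $Q\delta^{-1}\in N$ and $M\subset N$.

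Your claim (ii), that $Q=1+\tfrac12(Y+Y^{-1})$ holds on the nose in $H[\pi]$, is exactly what would make your $(1-\e)$-comparison immediate, but you have not proved it and it appears to be too strong. Applying $\e$ to $Y+1=fQ+g\delta$ gives $1+\tfrac12(Y+Y^{-1})=\e(f)\,Q+(\text{term in }H^{+}\delta^{2})$, and the paper's computation shows that the relevant ``leading coefficient'' is $2X^{-s}$ (symmetrized to $X^{s}+X^{-s}$), not $1$. So what actually holds is a congruence of the shape $1+\tfrac12(Y+Y^{-1})\equiv (X^{s}+X^{-s})Q$ modulo $H^{+}\delta^{2}$, which suffices for $(1-\e)N=(1-\e)M$ but is not the equality you asserted. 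If you want to keep your $\e$-splitting approach, you must replace (ii) by this weaker congruence and justify it via the same expansion of $f$ that the paper uses; at that point the work is essentially the same as the paper's direct inclusion argument. The ``in particular'' clause is fine either way: it follows from \cite[Thm.~3.9]{BS16} once $M=N$ is known, exactly as you say.
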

\begin{proof}
 First, by Lemma 3.5 in \cite{BS16}, it is clear that $H^+[X^{\pm 1},Y^{\pm 1}] = H^+[X{\pm 1}]$ in $H[\pi]$. By \cite[Proof of Thm.\ 3.7]{BS16}, we know that 
 \begin{equation}\label{y2b1}
  Y = fQ + g \delta - 1
 \end{equation}
where we have written
\begin{eqnarray*}
 f &=& 2X^{-s}(L+NJ) \in H^+[X^{\pm 1}]\\
 g &=& X^{-s}\left( 2N^2J\delta + 2LM + A(X)\right) \in H^+[X^{\pm 1}]\\
 s &=& 4\sum_{n=1}^d e_n\\
 A(X) &=& \left\{ 
 \begin{array}{ll} 
 X+X^3+\cdots+X^{s-1} & \textrm{if } s \not= 0\\
  0,&\textrm{if } s=0                 
 \end{array} \right.
\end{eqnarray*}
It follows from \eqref{y2b1} that $(y+1)\delta^{-1} = g + fQ\delta^{-1}$, which implies $N \subset M$.

To prove the inclusion $M \subset N$, we note that \cite[pg.\ 119]{BH95} shows that
\begin{eqnarray*}
 L &=& 1 + 2F^2J - 2G^2IJ\\
 N&=& 2DG + 2EF
\end{eqnarray*}
This shows that $fQ = 2X^{-s}(L + NJ)Q$, which implies
\begin{eqnarray*}
 fQ &=& 2X^{-s}(1+2F^2J-2G^2IJ+2DGJ+2EFJ)Q\\
 &=& 2X^{-s}[1+2(F^2+DG+EF)J]Q\\
 &=& 2X^{-s}[1+2(F^2+DG+EF)\delta^2]Q
\end{eqnarray*}
because $IQ=0$ and $JQ=\delta^2Q$ in $H$. Hence 
\[
 (Y+1)\delta^{-1} = (fQ+g\delta)\delta^{-1} = fQ\delta^{-1}+g \in N
\]
This implies 
\[
 fQ\delta^{-1} = (Y+q)\delta^{-1}-g \in N
\]
which then implies
\[
 2X^{-s}[1+2(F^2+DG+EF)\delta^2]Q\delta^{-1} = 2X^{-s}[Q\delta + 2(F^2+DG+EF)\delta Q] \in N
\]
This implies $2X^{-s}Q\delta^{-1} \in N$, which implies $Q\delta^{-1} \in N$, which finally implies $M \subset N$.

Finally, the last statement follows \cite[Thm.~3.9]{BS16}, which proves the conditions in Proposition \ref{prop_3conditions} for the module $M$.
\end{proof}

\section{Further remarks}\label{sec_furtherremarks}
In this section we provide some further remarks about the Brumfiel-Hilden condition \eqref{bhcond}. First, we propose a generalization from $\SL_2(\C)$ to $\SL_n(\C)$ (although we will leave the problem of relating this to higher rank DAHAs to later work). Second, it is natural to ask whether there is a condition on the $A$-polynomial of a knot $K$ that implies the Brumfiel-Hilden condition for $K$. We show that there is such a condition on $A$, but that it does not hold for the figure eight knot or for some torus knots. This makes it seem less likely that the Brumfiel-Hilden condition can be proved using properties of the $A$-polynomial.

\subsection{Higher rank generalization}


%

Given a group $\pi$, let $\rep_n(\pi) := \Hom(\pi,\SL_n)$ be the variety of representations of $\pi$ into $\SL_n(\c)$ (which are not considered up to isomorphism). We also define
\begin{equation}\label{hndef}
H_n[\pi] := \Gamma(\rep_n(\pi),M_n(\C))^{\GL_n},\quad \quad H^+_n[\pi] := \Gamma(\rep_n(\pi),\c)^{\GL_n}
\end{equation}
Here if $X$ is a space and $V$ a vector space, we have written $\Gamma(X,V)$ for $V$-valued functions on $X$. If $G$ acts on $X$ and $V$, then $\Gamma(X,V)^G$ is the space of $G$-equivarient $V$-valued functions. The action of $\GL_n$ on the space $M_c(\c)$ of $n\times n$ matrices is by conjugation, and the action of $\GL_n$ on $\c$ is trivial. 

By definition, $H^+_n[\pi]$ is the ring of functions on the $\SL_n$ character variety of $\pi$. We remark that one easy source of equivariant sections in $H_n[\pi]$ are evaluations at elements of $\pi$: given $g \in \pi$, define
\[
\ev_g: \rep(\pi) \to M_n(\c),\quad\quad \rho \mapsto \rho(g)
\]
Similarly, an element $g \in \pi$ produces a function $\rho \mapsto \Tr(\rho(g))$ in $H^+_n[\pi]$.

We now give a statement which implies Conjecture \ref{conj_bs14} when $n=2$ and $q=-1$. However, we remark that we have no evidence for this statement other than $n=2$. We will write 
\[ 
X := \ev_m \in H[\pi],\quad Y:= \ev_l \in H[\pi]
\] 
where $m$ and $l$ are the meridian and longitude of the knot, and we will use the fact that $H_n[\pi]$ is an algebra (where the multiplication is ``pointwise'' and comes from matrix multiplication).

\begin{conjecture}
We (optimistically) believe the following inclusion holds:
\begin{equation}\label{wildstatement}
H^+_n[\pi][X^{\pm 1},Y^{\pm 1}] \subset H^+_n[\pi][X^{\pm 1}]
\end{equation}
\end{conjecture}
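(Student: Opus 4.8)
The plan is to first reduce \eqref{wildstatement} to a single divisibility relation on the $\SL_n$ character variety, and then to establish that relation using the higher-rank analogue of the $A$-polynomial. For the reduction: since the elements of $H^+_n[\pi]$ are scalar-valued $\GL_n$-invariant functions they are central in $H_n[\pi]$, so $S:=H^+_n[\pi][X^{\pm 1}]$ is a commutative ring and $Y$ commutes with $S$ (because $l$ and $m$ commute in $\pi$). Applying Cayley--Hamilton to the matrix $X=\ev_m$, whose characteristic polynomial has coefficients $\rho\mapsto e_k(\rho(m))\in H^+_n[\pi]$ (the elementary symmetric functions of the eigenvalues) and constant term $(-1)^n\det=(-1)^n$, shows that $S$ is generated as an $H^+_n[\pi]$-module by $1,X,\dots,X^{n-1}$; the same argument for $Y$ gives $Y^{\pm 1}\in H^+_n[\pi][Y]\subseteq S$ as soon as $Y\in S$. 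Hence \eqref{wildstatement} is equivalent to the single relation
\[
Y\ \in\ H^+_n[\pi][X^{\pm 1}],
\]
the literal $\SL_n$-analogue of the Brumfiel--Hilden condition \eqref{bhcond}: it asks for functions $c_0,\dots,c_{n-1}\in H^+_n[\pi]$ with $\ev_l=\sum_{i=0}^{n-1}c_i\,\ev_m^{\,i}$ on $\rep_n(\pi)$. I would adopt this as the statement to be proved.

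The main step is to produce the $c_i$. Over the open subset of $\chr_n(\pi)$ where $\rho(m)$ is regular semisimple, $\C[\rho(m)]$ is its own centralizer and so contains $\rho(l)$; the $c_i$ then exist uniquely and are given by Lagrange interpolation through the matched eigenvalue pairs $(\mu_j,\lambda_j)$ of $(\rho(m),\rho(l))$, hence are rational functions whose only possible poles lie along the zero locus of the Vandermonde $\prod_{j<k}(\mu_j-\mu_k)$. The content of the conjecture is that these poles cancel. Where $\rho(m)$ is non-semisimple this is automatic from the structure of centralizers of non-regular matrices (a matrix commuting with one having a nontrivial Jordan block is forced to share the corresponding eigenvalue collision, exactly as in the $n=2$ parabolic case); the genuinely hard locus is where $\rho(m)$ is semisimple with repeated eigenvalues, where commutativity alone imposes nothing on $\rho(l)$. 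There one must invoke the eigenvalue variety of $\pi$ --- the $\SL_n$ counterpart of the $A$-polynomial curve of \cite{CCG94}, together with the fact that the longitude is null-homologous --- to show that collisions among the $\mu_j$ force the $\lambda_j$ to collide to the order needed to clear the Vandermonde. I expect this divisibility argument to be the principal obstacle: for $n\ge 3$ there is no known analogue of the two-term presentation $H[F_2]\cong R\oplus Rt$, nor of the explicit peripheral-word and $A$-polynomial computations that drive Sections \ref{sec_torus}--\ref{sec_twobridge}, and the relevant denominator is now the full Weyl discriminant rather than a single element $\delta$.

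An alternative, more structural route would mirror Proposition \ref{prop_3conditions} and Corollary \ref{cor_BH_def}: localise $H_n[\pi]$ at the Weyl discriminant $\Delta=\prod_{j<k}(X_j-X_k)$, form the $H^+_n[\pi][X^{\pm 1}]$-submodule $N_n$ generated by the ``poles'' of the Cherednik operators of the $\GL_n$ double affine Hecke algebra under its Dunkl embedding (these involve $\ev_l$ and $\Delta^{-1}$), and prove that $N_n$ is stable under those operators and that $\e N_n=H^+_n[\pi]$ for the symmetriser $\e=\tfrac1{n!}\sum_{w\in S_n}w$; as in Corollary \ref{cor_BH_def} this would give \eqref{wildstatement}. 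Since the higher-rank DAHA and skein input needed to run either argument in full is not yet available, the realistic first step --- and the way to acquire the evidence the conjecture currently lacks --- is to verify the reformulated relation $Y\in H^+_n[\pi][X^{\pm 1}]$ directly for torus knots and $2$-bridge knots, where the peripheral system and the eigenvalue variety are explicit.
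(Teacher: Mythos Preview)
The statement you are attempting to prove is not a theorem in the paper but a \emph{conjecture}: the paper explicitly labels it as such and remarks immediately beforehand that ``we have no evidence for this statement other than $n=2$.'' There is therefore no proof in the paper to compare your proposal against. What the paper does contain for $n=2$ is the Brumfiel--Hilden condition (Conjecture~\ref{bhconj}), which is itself still open in general and is only verified for the families of knots treated in Sections~\ref{sec_torus}--\ref{sec_twobridge}.

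Your proposal is not a proof but a research outline, and you essentially acknowledge this yourself. The reduction via Cayley--Hamilton to the single condition $Y\in H^+_n[\pi][X^{\pm 1}]$ is correct and useful. However, your ``main step'' --- clearing the Vandermonde denominator in the Lagrange-interpolation formula for the $c_i$ --- is not carried out; you only describe where the difficulty lies and concede that ``the higher-rank DAHA and skein input needed to run either argument in full is not yet available.'' In particular, the assertion that eigenvalue collisions of $\rho(m)$ force matching collisions of $\rho(l)$ to sufficient order is precisely the content of the conjecture on the locus where $\rho(m)$ is semisimple with repeated eigenvalues, and you offer no mechanism to establish it. Even for $n=2$, the analogous divisibility $A_K(\pm 1,l)\mid l^2-1$ fails for many knots (see Section~\ref{sec_furtherremarks}), so appealing to the eigenvalue variety alone cannot suffice; the $n=2$ arguments in the paper rely instead on explicit presentations of $H[\pi]$ that have no known higher-rank counterpart. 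Your proposal is a reasonable plan of attack, but it does not constitute a proof.
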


The left hand side of this (conjectural at best) inclusion is the subalgebra of $H_n[\pi]$ generated by $H^+[\pi]$ and the elements $X^{\pm 1}$ and $Y^{\pm 1}$, and similarly for the right hand side. (The reverse inclusion is obvious.)

\begin{remark}
For $n=2$, (\ref{wildstatement}) appeared as a conjecture in the last sentence of \cite[pg. 122]{BH95}.
\end{remark}

\subsection{The $A$-polynomial and the Brumfiel-Hilden condition}
Recall that for a knot $K \subset S^3$ with $\pi = \pi_1(S^3\setminus K)$ we define the algebra map 
\[
 \alpha: \C[m^{\pm 1},l^{\pm 1}] \to H[\pi]
\]


We now define two ideals in $\C[m^\pmone,l^\pmone]$:
\[
J := \Ker(\alpha) \subset \C[m^{\pm 1},l^{\pm 1}],\quad \quad \langle J,m-m^{-1}\rangle\subset \C[m^{\pm 1},l^{\pm 1}]
\]
Consider the following condition:
\begin{equation}\label{eq_cond4}
 l-l^{-1} \in \langle J,m-m^{-1}\rangle \subset \C[m^{\pm 1},l^{\pm 1}]
\end{equation}

\begin{lemma}
 Condition (\ref{eq_cond4}) implies the conditions in Proposition \ref{prop_3conditions}.
\end{lemma}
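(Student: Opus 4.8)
The plan is to show that condition \eqref{eq_cond4} in $\C[m^{\pm 1},l^{\pm 1}]$ pushes forward, under the algebra map $\alpha: \C[m^{\pm 1},l^{\pm 1}] \to H[\pi]$, to condition \eqref{i1} of Proposition \ref{prop_3conditions}, namely $Y - Y^{-1} \in H^+\delta$. Since the three conditions of Proposition \ref{prop_3conditions} are equivalent, this suffices. First I would apply $\alpha$ to the hypothesis. We can write $l - l^{-1} = \sum_i f_i\, j_i + g\,(m - m^{-1})$ for some $f_i, g \in \C[m^{\pm 1}, l^{\pm 1}]$ and $j_i \in J = \Ker(\alpha)$. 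Applying $\alpha$ kills the $j_i$ terms (that is the whole point of $J = \Ker(\alpha)$), so we obtain
\[
Y - Y^{-1} = \alpha(g)\,(X - X^{-1}) = \alpha(g)\,\delta
\]
in $H[\pi]$. This already shows $Y - Y^{-1} \in H[\pi]\,\delta$; the remaining issue is purely that $\alpha(g)$ need not lie in $H^+ = H^+[\pi]$.

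The key step is therefore to upgrade ``$Y - Y^{-1} \in H\delta$'' to ``$Y - Y^{-1} \in H^+\delta$.'' Here I would use the anti-involution $\sigma: H \to H$, $\sigma(h) = h^{-1}$ on group elements, which satisfies $\sigma(X) = X^{-1}$, $\sigma(Y) = Y^{-1}$ (the meridian and longitude generate a commutative subgroup so there is no ordering subtlety), and $\sigma(\delta) = -\delta$. Applying $\sigma$ to the identity $Y - Y^{-1} = \alpha(g)\,\delta$ gives $Y^{-1} - Y = \sigma(\delta)\,\sigma(\alpha(g)) = -\delta\,\sigma(\alpha(g))$, hence $Y - Y^{-1} = \delta\,\sigma(\alpha(g))$. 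Averaging the two expressions,
\[
Y - Y^{-1} = \tfrac12\big(\alpha(g)\,\delta + \delta\,\sigma(\alpha(g))\big).
\]
Now $\delta$ lies in the image of $\C[X^{\pm 1}]$ and in particular commutes with $X^{\pm 1}$; one checks from the defining relation of $H[\pi]$ that $\delta = X - X^{-1}$ is central (it is of the form $g - g^{-1}$), so $\alpha(g)\delta + \delta\sigma(\alpha(g)) = (\alpha(g) + \sigma(\alpha(g)))\delta$. Setting $A := \tfrac12(\alpha(g) + \sigma(\alpha(g)))$, we have $\sigma(A) = A$, i.e. $A \in H^+$ by definition, and $Y - Y^{-1} = A\delta \in H^+\delta$. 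This is exactly condition \eqref{i1}.

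The main obstacle I anticipate is the bookkeeping around $\sigma$ and centrality of $\delta$: one must make sure that $\sigma$ genuinely descends to the ideal-defining quotient $H[\pi]$ (it does, because the relator $g(h+h^{-1}) - (h+h^{-1})g$ is $\sigma$-stable up to sign), that $\sigma$ fixes $\delta$ up to the sign $-1$, and that $\delta$ is central in $H[\pi]$ so that left and right multiplication agree — all of which are either stated in Section \ref{sec_bh} or follow from the observation (made there) that $A + A^{-1} = \Tr(A)\mathrm{Id}$ is central. Once these are in hand the argument is a two-line symmetrization. A secondary (but genuinely routine) point is to confirm that $\alpha$ as defined here is the same algebra map whose behavior on $X^{\pm 1}$ and $Y^{\pm 1}$ we are using, i.e. that $\alpha(m) = X$ and $\alpha(l) = Y$; this is immediate from the definition of $\alpha$ as the map induced by the peripheral inclusion on $H[-]$.
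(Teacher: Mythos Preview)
Your approach is the same as the paper's: apply $\alpha$ to the hypothesis. The paper's own proof is actually terser than yours and arguably incomplete --- it stops at
\[
Y - Y^{-1} \in \alpha(\C[m^{\pm 1},l^{\pm 1}])\,(X-X^{-1}) \subset H\delta,
\]
which is weaker than condition \eqref{i1} ($Y-Y^{-1}\in H^+\delta$). Your symmetrization via the anti-involution $\sigma$ is exactly what is needed to close this gap, and the argument is correct.

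One correction, though: your claim that $\delta = X - X^{-1}$ is central in $H[\pi]$ because ``it is of the form $g - g^{-1}$'' is wrong. It is elements of the form $g + g^{-1}$ that are central in $H[\pi]$, by the very defining relation; $g - g^{-1}$ need not be. Fortunately you do not need centrality. What you actually use is that $\delta$ commutes with $\alpha(g)$ and with $\sigma(\alpha(g))$, and this holds for a different reason: both $\alpha(g)$ and $\sigma(\alpha(g))$ lie in the image $\alpha(\C[m^{\pm 1},l^{\pm 1}])$ (the latter because $\sigma$ sends $X\mapsto X^{-1}$, $Y\mapsto Y^{-1}$, hence preserves this image), and this image is a \emph{commutative} subalgebra of $H[\pi]$ that already contains $\delta$. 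With that correction your averaging step
\[
Y - Y^{-1} = \tfrac12\bigl(\alpha(g)+\sigma(\alpha(g))\bigr)\,\delta
\]
goes through, and $\tfrac12(\alpha(g)+\sigma(\alpha(g)))\in H^+$ since it is $\sigma$-fixed.
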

\begin{proof}
 If $l - l^{-1} \in \langle J,m-m^{-1}\rangle$, then we can apply $\alpha$ to obtain 
 \[ Y - Y^{-1} \in \alpha(\C[m^\pmone,l^\pmone])(X-X^{-1}) \subset H\delta \]
\end{proof}

\begin{remark}
 Condition (\ref{eq_cond4}) is equivalent to 
 \[
  l - l^{-1} \equiv f(m,l)(m-m^{-1}) \,\, (\textrm{mod } J)
 \]
for some $f(m,l)$.
\end{remark}
By \cite[Prop 10.5]{BH95}, the ideal $J$ has primary decomposition 
\[
J = \pp_1 \cap \pp_2 \cap \cdots \cap \pp_n
\]
where $\pp_i$ are the primary components belonging to the prime ideals of dimension 1. These primary ideals are generated by powers of irreducible polynomials in $\C[m^\pmone,l^\pmone]$, i.e. 
\[
\pp_i = \langle p_i^{m_i}\rangle
\]
and the product of the $p_i$'s is the $A$-polynomial:
\[
A_K(m,l) = \prod_{i=1}^k p_i
\]
It follows that 
\begin{equation}\label{eq_y5}
J \subset \langle A_K(m,l)\rangle
\end{equation}
The \emph{BH conjecture} is that the containment \ref{eq_y5} is actually an equality.

We will need a weaker version:
\begin{conjecture}[Weak BH conjecture]\label{conj_bhweak}
$\langle J,m-m^{-1}\rangle = \langle A_K(m,l),m-m^{-1}\rangle$
\end{conjecture}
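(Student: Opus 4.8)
The plan is to split the asserted equality into its two inclusions. The inclusion $\langle J, m-m^{-1}\rangle \subseteq \langle A_K(m,l), m-m^{-1}\rangle$ is immediate from the containment $J \subseteq \langle A_K(m,l)\rangle$ recorded in \eqref{eq_y5}, so the whole content is to prove $A_K(m,l) \in \langle J, m-m^{-1}\rangle$; since $m-m^{-1}$ already lies in both ideals, this is the same as showing that $J$ and $\langle A_K(m,l)\rangle$ have the same image in the quotient ring $\bar R := \C[m^{\pm1},l^{\pm1}]/(m-m^{-1})$. Recall from \cite[Prop.~10.5]{BH95} that $J = \pp_1\cap\cdots\cap\pp_n$ with $\pp_i=\langle p_i^{m_i}\rangle$ and $A_K=\prod_i p_i$; since $\C[m^{\pm1},l^{\pm1}]$ is a UFD and the $p_i$ are distinct irreducibles, this gives $J = \langle \prod_i p_i^{m_i}\rangle$, a principal ideal whose generator is $A_K$ times the ``excess'' factor $c := \prod_i p_i^{m_i-1}$.

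Next I would unwind $\bar R$. Because $m$ is a unit, $m-m^{-1}$ is an associate of $(m-1)(m+1)$, and the Chinese Remainder Theorem gives $\bar R \cong \C[l^{\pm1}]\times\C[l^{\pm1}]$, the two factors being the specializations $m=1$ and $m=-1$. As each factor is a PID and $J$ is principal, the desired equality of images reduces to the following one-variable statement: for $\varepsilon\in\{1,-1\}$ the principal ideals $\langle \prod_i p_i(\varepsilon,l)^{m_i}\rangle$ and $\langle \prod_i p_i(\varepsilon,l)\rangle$ of $\C[l^{\pm1}]$ coincide. Two Laurent polynomials generate the same ideal precisely when they differ by a unit (a nonzero scalar times a monomial), so it suffices to prove: \emph{for every index $i$ with $m_i\ge 2$, the specialization $p_i(\varepsilon,l)$ is either a unit of $\C[l^{\pm1}]$ or identically zero.} Geometrically this says that the non-reduced components of $\Spec\,\C[m^{\pm1},l^{\pm1}]/J$ — the scheme-theoretic image of $\Rep(\pi)$ under restriction to the peripheral torus — either lie entirely over the lines $\{m=\pm1\}$ or are disjoint from them.

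This last statement is the heart of the matter and is where the real work lies; it is essentially a question about the infinitesimal structure of the restriction map $\Rep(\pi)\to\Rep(\Z^2)$ along characters with $\tr\rho(m)=\pm2$ (so $\rho(m)$ is $\pm$unipotent or central). I would try two complementary attacks. (i) A deformation-theoretic one: at a character $\chi$ on a component $V(p_i)$ with $\tr\chi(m)=\pm2$, compute the local ring (equivalently the tangent/obstruction spaces) of the character scheme near $\chi$ and show it is reduced there for the components that genuinely cross $\{m=\pm1\}$, using a Poincar\'e-duality / parabolic-cohomology argument analogous to the smoothness results for character varieties of hyperbolic knots. (ii) A Newton-polygon attack: by \cite{CCG94} the Newton polygons of the $p_i$ have slopes that are boundary slopes of incompressible surfaces, and the $p_i$ satisfy the symmetry $p_i(m,l)\doteq p_i(m^{-1},l^{-1})$; one could try to show that a repeated component crossing $\{m=\pm1\}$ would force an impossible constraint on its Newton polygon, ruling out $m_i\ge2$ there.

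The main obstacle is precisely the control of the multiplicities $m_i$: this is exactly the gap between the full Brumfiel--Hilden conjecture ($J=\langle A_K\rangle$) and what is known, and no uniform bound on the $m_i$ is available. The weak form asks only that excess multiplicities become invisible after setting $m=\pm1$ — which is plausibly more tractable, and is in any case vacuously true for all the knots where the full conjecture is verified (2-bridge knots, torus knots, the pretzel knots of Section~\ref{sec_pretzel}) — but a proof in general would seem to require genuinely new structural input about representation varieties of knot groups near peripheral data of trace $\pm2$.
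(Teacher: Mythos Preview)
The statement you are addressing is labeled \emph{Conjecture} in the paper and is not proved there; the paper offers no argument for it, so there is no proof to compare your proposal against. Your reduction---one inclusion from \eqref{eq_y5}, then passing to $\C[m^{\pm1},l^{\pm1}]/(m-m^{-1})\cong \C[l^{\pm1}]\times\C[l^{\pm1}]$ and using that $J=\langle\prod_i p_i^{m_i}\rangle$ is principal---is correct and isolates the essential content: the equality holds iff, at $m=\pm1$, the excess factor $\prod_i p_i(\pm1,l)^{m_i-1}$ is a unit (or some $p_j(\pm1,l)$ vanishes identically). But you yourself recognize that this reduced statement is precisely what is unknown: controlling the multiplicities $m_i$ (even only along $m=\pm1$) is the open part of the Brumfiel--Hilden picture, and neither of your suggested attacks (deformation theory near $\tr\rho(m)=\pm2$, or Newton-polygon constraints from \cite{CCG94}) is carried through.

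So there is no error in your outline, but there is also no proof: the proposal is an honest reformulation of an open conjecture, ending exactly where the difficulty begins. The paper treats the statement as a hypothesis (see Corollary~\ref{cor_adiv}) rather than a theorem, and your write-up should do the same.
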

We next introduce the following condition:
\begin{equation}\label{eq_y6}
l - l^{-1} \in \langle A_K(m,l),m-m^{-1}\rangle
\end{equation}

\begin{lemma}\label{lemma_eqcond}
Condition \eqref{eq_y6} is equivalent to
\begin{equation}\label{eq_y6p}
A_K(\pm 1, l) \textrm{ divides } l^2 - 1 \in \C[l^\pmone]
\end{equation}
\end{lemma}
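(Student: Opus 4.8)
The plan is to identify the quotient ring $\C[m^{\pm 1},l^{\pm 1}]/\langle A_K(m,l),\,m-m^{-1}\rangle$ explicitly and then read off exactly when the class of $l-l^{-1}$ vanishes in it.

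First I would note that $m$ is a unit, so $\langle m-m^{-1}\rangle=\langle m^2-1\rangle=\langle(m-1)(m+1)\rangle$. Since $\tfrac12(m+1)-\tfrac12(m-1)=1$, the ideals $\langle m-1\rangle$ and $\langle m+1\rangle$ are comaximal in $\C[m^{\pm1}]$, hence also in $R:=\C[m^{\pm1},l^{\pm1}]$, so the Chinese Remainder Theorem gives a ring isomorphism
\[
R/\langle m-m^{-1}\rangle \;\cong\; \C[l^{\pm1}]\times\C[l^{\pm1}],\qquad \bar f\longmapsto (f(1,l),\,f(-1,l)),
\]
where $f(\pm1,l)$ denotes the image of $f$ under the evaluation map $R\to\C[l^{\pm1}]$, $m\mapsto\pm1$ (which is well defined since $\pm1$ are units).

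Next I would push $A_K$ through this isomorphism. Quotienting further by the image of $A_K$ yields $R/\langle A_K,\,m-m^{-1}\rangle\cong \C[l^{\pm1}]/\langle A_K(1,l)\rangle\times\C[l^{\pm1}]/\langle A_K(-1,l)\rangle$, so for any $f\in R$ one has $f\in\langle A_K,\,m-m^{-1}\rangle$ if and only if $A_K(1,l)$ divides $f(1,l)$ and $A_K(-1,l)$ divides $f(-1,l)$ in $\C[l^{\pm1}]$. I would then apply this with $f=l-l^{-1}$, which does not involve $m$, so that $f(1,l)=f(-1,l)=l-l^{-1}$; thus \eqref{eq_y6} is equivalent to requiring $A_K(\varepsilon,l)\mid l-l^{-1}$ in $\C[l^{\pm1}]$ for both $\varepsilon=1$ and $\varepsilon=-1$. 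Finally, since $l$ is a unit, $l-l^{-1}=l^{-1}(l^2-1)$ generates the same ideal of $\C[l^{\pm1}]$ as $l^2-1$, so $A_K(\varepsilon,l)\mid l-l^{-1}$ is the same as $A_K(\varepsilon,l)\mid l^2-1$, which is precisely \eqref{eq_y6p}.

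I do not expect a serious obstacle here: the argument is essentially a single application of CRT. The only points needing a little care are working in the Laurent ring $\C[l^{\pm1}]$ (so that ``divides'' is understood up to the units $c\,l^k$) and the degenerate case in which $A_K(1,l)$ or $A_K(-1,l)$ happens to be a monomial, where the corresponding divisibility holds automatically and the equivalence remains valid on that factor. I would state the CRT isomorphism carefully and let the membership criterion follow formally from it.
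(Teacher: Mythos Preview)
Your argument is correct. The CRT splitting of $R/\langle m-m^{-1}\rangle$ into two copies of $\C[l^{\pm1}]$ indexed by $m=\pm1$ is exactly the right organising device, and the remaining steps (pushing $A_K$ through, replacing $l-l^{-1}$ by $l^2-1$ using that $l$ is a unit) are routine.

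Your route differs from the paper's. The paper first invokes a property of the $A$-polynomial from \cite{CL98}, namely that $A_K(m,l)$ can be normalised to a polynomial in $m^2$; this forces $A_K(1,l)=A_K(-1,l)$, so ``$A_K(\pm1,l)$'' is literally a single polynomial. The paper then expands $P(m^2,l)=\sum_i P_i(l)(m^2-1)^i$ to see that $\langle A_K,\,m-m^{-1}\rangle=\langle A_K(\pm1,l),\,m^2-1\rangle$, and reads off the divisibility from there. Your CRT argument bypasses the appeal to \cite{CL98} entirely and proves the equivalence with ``$A_K(\varepsilon,l)\mid l^2-1$ for both $\varepsilon=\pm1$'' directly; the two conditions collapse to one once the $m^2$-dependence is noted. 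So your approach is more elementary and slightly more general, while the paper's approach has the side benefit of explaining why the $\pm$ in the statement refers to a single polynomial rather than two.
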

\begin{proof}
(We refer to Cooper-Long's 1998 paper \cite{CL98} for results on the $A$-polynomial.
By \cite[Thm. 3.5]{CL98}, $A_K(m,l)$ can be normalized so that $A(m,l) = P(m^2,l) \in \C[m^2,l]$. Write
\begin{equation}\label{eq_msq}
P(m^2,l) = \sum_{i=0}^n P_i(l)(m^2-1)^i
\end{equation}
so that $P_0(l) = P(1,l) = A_K(\pm 1,l)$. Then
\begin{eqnarray*}
\langle A(m,l),m-m^{-1}\rangle &=& \langle P(m^2,l), m^2-1\rangle\\
&=& \langle P(1,l), m^2-1\rangle\\
&=& \langle A(\pm 1, l), m^2-1\rangle
\end{eqnarray*}
Thus (\ref{eq_y6}) holds iff $(l^2-1) \in \langle A(\pm 1,l),m^2-1\rangle$, which is equivalent to $l^2-1 = a(l,m)A(\pm 1,l) + b(l,m)(m^2-1)$. Finally, this equality can only hold when $b(l,m) = 0$, which implies equation (\ref{eq_y6p}). 

Conversely, the expansion (\ref{eq_msq}) shows that (\ref{eq_y6p}) implies (\ref{eq_y6}).
\end{proof}

\begin{corollary}\label{cor_adiv}
Condition (\ref{eq_y6p}) combined with the weak BH Conjecture \ref{conj_bhweak} imply \cite[Conj. 1]{BS16} at $q=-1$. We also have the following implication:
\[
l - l^{-1} \in \langle J,\, m-m^{- 1}\rangle \quad \Rightarrow \quad A_K(\pm 1,l) \mid (l^2-1)
\]
\end{corollary}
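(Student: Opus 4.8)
The plan is to obtain the implication by chaining two facts already in hand: the inclusion $J \subset \langle A_K(m,l)\rangle$ recorded in \eqref{eq_y5}, and the equivalence of \eqref{eq_y6} with \eqref{eq_y6p} proved in Lemma \ref{lemma_eqcond}. No new computation is required; the content is purely bookkeeping with ideals in $\C[m^{\pm 1},l^{\pm 1}]$.

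First I would invoke \eqref{eq_y5}. Recall that by \cite[Prop.\ 10.5]{BH95} the ideal $J = \Ker(\alpha)$ has primary decomposition $J = \pp_1 \cap \cdots \cap \pp_n$ into its $1$-dimensional primary components $\pp_i = \langle p_i^{m_i}\rangle$, and that $A_K = \prod_i p_i$; hence every element of $J$ is divisible by $A_K$, giving $J \subseteq \langle A_K(m,l)\rangle$. Taking the ideal generated on each side together with $m - m^{-1}$ yields the inclusion
\[
\langle J,\, m - m^{-1}\rangle \subseteq \langle A_K(m,l),\, m - m^{-1}\rangle .
\]

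Second, assume the hypothesis $l - l^{-1} \in \langle J,\, m - m^{-1}\rangle$. By the inclusion above, $l - l^{-1} \in \langle A_K(m,l),\, m - m^{-1}\rangle$, which is precisely condition \eqref{eq_y6}. Lemma \ref{lemma_eqcond} then gives that \eqref{eq_y6} is equivalent to \eqref{eq_y6p}, i.e.\ that $A_K(\pm 1, l)$ divides $l^2 - 1$ in $\C[l^{\pm 1}]$, which is the desired conclusion. (Here one uses, as in the proof of Lemma \ref{lemma_eqcond}, the normalization $A_K(m,l) = P(m^2,l)$ of \cite[Thm.\ 3.5]{CL98}, so that the specialization $A_K(\pm 1,l) = P(1,l)$ is unambiguous.)

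Since both ingredients are already established, there is no genuine obstacle; the only point to keep straight is that enlarging the ideal from $\langle J, m - m^{-1}\rangle$ to $\langle A_K, m - m^{-1}\rangle$ preserves the membership of $l - l^{-1}$, which is immediate from $J \subseteq \langle A_K\rangle$. Conceptually, this implication is what makes $A_K(\pm 1,l)\mid(l^2-1)$ an honest \emph{obstruction}: a knot whose $A$-polynomial fails this divisibility at $m = \pm 1$ cannot satisfy the ideal-theoretic form \eqref{eq_cond4} of the Brumfiel--Hilden condition, and the examples of the figure eight and certain torus knots show this failure does occur.
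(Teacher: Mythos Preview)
Your argument for the displayed implication is correct and is exactly the route the paper intends: use the unconditional inclusion $J \subseteq \langle A_K\rangle$ from \eqref{eq_y5} to pass from $\langle J,\,m-m^{-1}\rangle$ to $\langle A_K,\,m-m^{-1}\rangle$, then invoke Lemma~\ref{lemma_eqcond}. The paper gives no separate proof here, so there is nothing further to compare.

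One omission: the corollary also asserts that \eqref{eq_y6p} together with the weak BH Conjecture~\ref{conj_bhweak} implies \cite[Conj.~1]{BS16} at $q=-1$, and your write-up does not address this first sentence. The chain is the reverse of what you wrote: by Lemma~\ref{lemma_eqcond}, \eqref{eq_y6p} gives \eqref{eq_y6}, i.e.\ $l-l^{-1}\in\langle A_K,\,m-m^{-1}\rangle$; the weak BH Conjecture identifies this ideal with $\langle J,\,m-m^{-1}\rangle$, so condition~\eqref{eq_cond4} holds; the unnamed lemma immediately preceding then yields the conditions of Proposition~\ref{prop_3conditions}, and one finishes via Corollary~\ref{cor_BH_def}. (Strictly speaking the injectivity hypothesis in Corollary~\ref{cor_BH_def} is also needed; the paper is silent on this point here.) It would be worth adding a sentence to cover this half of the statement.
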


\subsubsection{Examples}
We now give some examples to show that condition (\ref{eq_y6p}) does not hold in general.

\begin{example}[$(2,2p+1)$ torus knots]
\[
A_K(m,l) = (l-1)(1+lm^{2(2p+1)}),\quad A_K(\pm 1,l) = l^2-1
\]
Therefore, condition (\ref{eq_y6p}) holds. One can compute
\[
(Y-1)(Y+X^{-2(2p+1)}) = 0 \in H[\pi]
\]
A computation then implies
\begin{eqnarray*}
(Y-Y^{-1})\delta^{-1} &=& X(1 + X^2 + \cdots + X^{4p})(1-Y)\\
&=& (1/2)\left[ \Tr(X^{2p+1} - \Tr(YX^{2p+1})) + \sum_{k=1}^p (\Tr(X^{2k-1} -  \Tr(YX^{2k-1})\right]
\end{eqnarray*}

\end{example}
\begin{example}[The $(p,q)$ torus knots with $p,q > 2$:]
In this case,
\begin{eqnarray*}
A_K(m,l) &=& (l-1)(-1+l^2m^{2pq})\\ 
A_k(\pm 1,l) &=& (l-1)(l^2-1) 
\end{eqnarray*}
Since $A_K(\pm 1,l)$ does not divide $l^2-1$, we see that $l-l^{-1} \notin \langle J,m-m^{-1}\rangle$.

\end{example}

\begin{proposition}\label{prop_notsubmod}
Assume that the (equivalent) conditions of Proposition \ref{prop_3conditions} hold for a knot $K$ but condition \ref{eq_y6p} fails. Then
\begin{enumerate}
\item the map $\alpha$ is \emph{not} surjective,
\item $\mathrm{Im}(\alpha)$ is \emph{not} preserved by $U  = \delta^{-1}(1+sY)$.
\end{enumerate}
\end{proposition}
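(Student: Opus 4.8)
The plan is to reduce both assertions to the single observation that, once \eqref{eq_y6p} fails,
\[
Y - Y^{-1} \ \notin\ \delta\cdot\mathrm{Im}(\alpha).
\]
To establish this, I would first pass back and forth through $\alpha$: since $J = \Ker(\alpha)$ and $\alpha(m) = X$, $\alpha(l) = Y$, $\alpha(m - m^{-1}) = \delta$, for every $h \in \C[m^{\pm 1}, l^{\pm 1}]$ the polynomial $l - l^{-1} - (m - m^{-1})h$ lies in $J$ exactly when $Y - Y^{-1} = \delta\,\alpha(h)$; hence $l - l^{-1} \in \langle J, m - m^{-1}\rangle$ if and only if $Y - Y^{-1} \in \delta\cdot\mathrm{Im}(\alpha)$. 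By Corollary~\ref{cor_adiv} the left-hand membership would force $A_K(\pm 1, l)\mid (l^2 - 1)$, i.e.\ \eqref{eq_y6p}; as we are assuming \eqref{eq_y6p} fails, the displayed non-membership holds. At the same time, the standing assumption that the conditions of Proposition~\ref{prop_3conditions} hold gives $Y - Y^{-1} \in H^+\delta$ (condition~(\ref{i1})), and since $H^+$ is central in $H$ — immediate from the defining relations of $H[\pi]$ — this says $Y - Y^{-1} \in \delta H^+$.

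Part (1) is then immediate: if $\alpha$ were surjective we would have $\mathrm{Im}(\alpha) = H[\pi] \supseteq H^+$, hence $\delta\cdot\mathrm{Im}(\alpha) \supseteq \delta H^+ \ni Y - Y^{-1}$, contradicting the displayed non-membership.

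For part (2) I would use that $\mathrm{Im}(\alpha) = \C[X^{\pm 1}, Y^{\pm 1}]$ is commutative (the meridian and longitude commute in $\pi$), is stable under the anti-involution $s$ (as $s(X^iY^j) = X^{-i}Y^{-j} \in \mathrm{Im}(\alpha)$), and lies in $H^+[X^{\pm 1}] \subseteq N$ — this uses condition~(\ref{i2}) of Proposition~\ref{prop_3conditions} and the fact that $s$ preserves $H^+[X^{\pm 1}]$, so also $Y^{-1} = s(Y) \in H^+[X^{\pm 1}]$. In particular $U$ is defined on $\mathrm{Im}(\alpha)$, and $U \cdot 1 = \delta^{-1}(1 + s(Y\cdot 1)) = \delta^{-1}(1 + Y^{-1}) = Y^{-1}(Y + 1)\delta^{-1}$, which visibly lies in $N$. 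If $\mathrm{Im}(\alpha)$ were preserved by $U$ we would get $h := \delta^{-1}(1 + Y^{-1}) \in \mathrm{Im}(\alpha)$; applying $s$ (which preserves $\mathrm{Im}(\alpha)$ and sends $\delta$ to $-\delta$) yields $s(h) = -\delta^{-1}(1 + Y) \in \mathrm{Im}(\alpha)$, so $h + s(h) = -\delta^{-1}(Y - Y^{-1}) \in \mathrm{Im}(\alpha)$ and therefore $Y - Y^{-1} \in \delta\cdot\mathrm{Im}(\alpha)$, contradicting the displayed non-membership once more. Thus $\mathrm{Im}(\alpha)$ is not preserved by $U$. (Nothing changes if $U\cdot 1$ is instead read as $\delta^{-1}(1 + Y)$, the two possibilities differing only by $s$.)

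The argument is short; the only delicate part is the bookkeeping inside the localized module $H[\delta^{-1}]$ — checking that $U\cdot 1$ really is the displayed element of $N$, that $s$ extends to $H[\delta^{-1}]$ via $s\cdot\delta = -\delta$ exactly as set up before Proposition~\ref{prop_3conditions}, and that the equality $Y - Y^{-1} = \delta(h + s(h))$ produced in $H[\delta^{-1}]$ may be read back inside $\mathrm{Im}(\alpha) \subseteq H[\pi]$. All of this is routine given that $H^+$ is central and $Y^{\pm 1} \in H^+[X^{\pm 1}]$; the real content is packed into Corollary~\ref{cor_adiv}, which we are entitled to assume.
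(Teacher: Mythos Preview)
Your proof is correct and follows essentially the same approach as the paper's. For part~(1) the arguments are identical: surjectivity of $\alpha$ together with condition~(\ref{i1}) forces $Y-Y^{-1}\in\delta\cdot\mathrm{Im}(\alpha)$, which unwinds to $l-l^{-1}\in\langle J,m-m^{-1}\rangle\subseteq\langle A_K,m-m^{-1}\rangle$, contradicting the failure of \eqref{eq_y6p} via Lemma~\ref{lemma_eqcond}.

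For part~(2) there is a minor cosmetic difference. The paper applies the operator $YU-UY^{-1}$ to $1$ and computes directly that $(YU-UY^{-1})\cdot 1=\delta^{-1}(Y-Y^{-1})$, using that $\mathrm{Im}(\alpha)$ is stable under multiplication by $Y^{\pm 1}$. You instead compute $h=U\cdot 1=\delta^{-1}(1+Y^{-1})$ and exploit that $\mathrm{Im}(\alpha)$ is $s$-stable to obtain $h+s(h)=-\delta^{-1}(Y-Y^{-1})$. Both routes land on the same element $(Y-Y^{-1})\delta^{-1}\in\mathrm{Im}(\alpha)$ and finish identically. Your version makes the role of the involution $s$ slightly more visible; the paper's version stays closer to the module-theoretic language. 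Neither buys anything the other doesn't.
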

\begin{proof}
Assume that $\alpha$ is surjective. Then the first condition of Proposition \ref{prop_3conditions} implies $Y-Y^{-1} = F(X^\pmone,Y^\pmone)\delta$ for some $F$. This implies 
\begin{eqnarray*}
\alpha\left(l-l^{-1}-F(m^\pmone,l^\pmone)(m-m^{-1})\right) &=& 0,\quad \Rightarrow\\
l-l^{-1}-F(m^\pmone,l^\pmone)(m-m^{-1}) &\in& J,\quad \Rightarrow\\
l-l^{-1} \in \langle J,m-m^{-1}\rangle &\subset & \langle A_K(m,l,m-m^{-1}) 
\end{eqnarray*}
However, by Lemma \ref{lemma_eqcond}, this is equivalent to $A_K(\pm 1,l) \mid (l^2-1)$, which is a contradiction.

To prove the second claim, we argue by contradiction and assume that $U[\mathrm{Im}(\alpha)] \subset \mathrm{Im}(\alpha)$. Since we have already assumed that $UM \subset M$ and $\mathrm{Im}(\alpha) \subset M$, we see that $\mathrm{Im}(\alpha)$ is a submodule of $M$. Since $1 \in \mathrm{Im}(\alpha)$, we have that
\[
(YU - UY^{-1})1 \in \mathrm{Im}(\alpha)
\]
But $YU - UY^{-1} = \delta^{-1}(Y+s-Y^{-1}-s) = \delta^{-1}(Y-Y^{-1})$, and this combined with our assumptions shows that $(Y-Y^{-1})\delta^{-1} \in \mathrm{Im}(\alpha)$, which leads to a contradiction as in the first claim.
\end{proof}
\begin{example}[The figure eight]
One may compute
\[
(Y-Y^{-1})\delta^{-1} = -(I+IJ)\Tr(X) \in H[\pi]
\]
Then the proof of Proposition \ref{prop_notsubmod} implies that $I+IJ \notin \mathrm{Im}(\alpha)$ since condition \eqref{eq_y6p} fails to hold in this case.
\end{example}

\begin{remark}
By \cite[Theorem 3.6]{CL98}, $A(\pm 1,l)$ always divides $(l^2-1)^N$ for some $N$ under a certain (probably unnecessary) technical condition. We therefore have property (\ref{eq_y6}) in a weaker form:
\[
(l-l^{-1})^N \in \langle A_K(m,l),m-m^{\pm 1}\rangle
\]
for some $N \geq 1$. If the weak BH conjecture is true, this implies
\[
(Y-Y^{-1})^N \in H\delta
\]
\end{remark}

\section{Appendix: The Brumfiel-Hilden algebra}\label{sec_app}
\subsection{The BH algebra of a free group on two generators}
Let $ F := \langle a,b\rangle$ be the free group on two generators $a$ and $b$.
We begin by recalling the presentation of the BH algebra $ H[F] $ of $F$ given in \cite{BH95}. 
We will use the following notation (cf. \cite{BH95}): for any $ g \in F $, we write
$ g^+ := (g+g^{-1})/2 $ and $ g^- := (g-g^{-1})/2 $ in $ H[F] $; also, we set
\begin{eqnarray*}
 \lvert a \rvert & :=& a^- = (a-a^{-1})/2\\
 \lvert b \rvert & :=& b^- = (b-b^{-1})/2 \\
 \lvert ab \rvert & :=& (a^-b^-)^- = ab - b^{+} a - a^{+}b - (ab)^{+} + 2 a^+ b^+ \\
 x &:=& a^+ \\
 y &:=& b^+\\
 z &:=& (a^-b^-)^+ = (ab)^+ - a^+ b^+
\end{eqnarray*}
\begin{theorem}[{\cite[Prop 3.9]{BH95}}]
\la{BHT1}
 For $F = \langle a,b\rangle$, we have $\, H[F] = H^+[F] \oplus H^-[F]\,$, where
 \begin{enumerate}
  \item $H^+[F] = k[x,y,z]$ is a free polynomial ring
  \item $H^-[F] = H^+\lvert a \rvert \oplus H^+\lvert b \rvert \oplus H^+\lv{ab}$ is a free $H^+[F]$-module 
\end{enumerate}

The multiplication table in $H$ is given by
  \[
   \begin{array}{c|ccc} \, & \lvert a \rvert & \lvert b \rvert & \lvert ab \rvert \\ \hline
    \lvert a \rvert & x^2-1 & z + \lvert ab \rvert & -z\lv a + (x^2-1) \lv b\\
    \lv b & z - \lv{ab}& y^2-1 & -(y^2-1)\lv a + z \lv b \\
    \lv {ab} & z\lv a - (x^2-1)\lv b & (y^2-1)\lv a - z \lv b & z^2 - (x^2-1)(y^2-1)
   \end{array}
  \]
\end{theorem}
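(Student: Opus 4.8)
My plan is to attack this on two fronts. The quickest route I see uses Theorem~\ref{bhthm}(2): since $F=\langle a,b\rangle$ is free, $\Rep(F)=\SL_2(\c)\times\SL_2(\c)$, and that theorem identifies $H[F]$ with the algebra $\Gamma(\SL_2\times\SL_2, M_2(\c))^{\SL_2}$ of $\SL_2$-equivariant matrix functions for the diagonal conjugation action. Over this model classical invariant theory takes over: the invariant ring is $\O(\chr(F))=\c[\tr A,\tr B,\tr(AB)]$ by Fricke--Vogt, and the Procesi/Cayley--Hamilton formalism for two $2\times2$ matrices shows that every equivariant matrix function is an invariant-linear combination of $\mathrm{Id},A,B,AB$ — the polarized identity $AB+BA=\tr(A)B+\tr(B)A+(\tr(AB)-\tr(A)\tr(B))\mathrm{Id}$ reduces $BA,ABA,\dots$ — and these four are linearly independent over the invariants because $\mathrm{Id},A_{0},B_{0},A_{0}B_{0}$ span $M_2(\c)$ for generic $(A_{0},B_{0})$. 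Translating back, under the correspondences $x\leftrightarrow\tfrac12\tr A$, $y\leftrightarrow\tfrac12\tr B$, $z\leftrightarrow(AB)^{+}-A^{+}B^{+}$ and $|a|,|b|,|ab|\leftrightarrow$ the traceless parts of $A$, $B$, $\tfrac12(A^{-}B^{-}-B^{-}A^{-})$, one reads off the direct sum decomposition, the polynomiality of $H^{+}$, and (by a short bilinear computation with traceless $2\times2$ matrices) the multiplication table.

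Because Proposition~3.9 of \cite{BH95} is logically prior to their Proposition~9.1, I would also want an argument internal to $H:=H[F]$, built entirely from a Cayley--Hamilton identity forced by the relations. First the formal preliminaries: $\sigma$ descends to an anti-automorphism of $H$ (it sends the relator $g(h+h^{-1})-(h+h^{-1})g$ to another such relator), the subalgebra $H^{+}$ is generated by the \emph{central} elements $g+g^{-1}$, and $|a|,|b|,|ab|$ are $\sigma$-anti-fixed; since $2$ is invertible it therefore suffices to prove that $H$ is spanned over $H^{+}$ by $1,|a|,|b|,|ab|$ and to identify $H^{+}$. Next I would construct the trace: define $\tr\colon H\to H$ by $\tr(g):=g+g^{-1}$ and $\c$-linear extension. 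A direct check on the relators shows $\tr$ is well defined; it is $H^{+}$-linear, valued in $H^{+}$, and cyclic, the identity $\tr(uv)=\tr(vu)$ reducing to $gh-hg=g^{-1}h^{-1}-h^{-1}g^{-1}$, which itself follows from centrality of $g+g^{-1}$ and $h+h^{-1}$. The defining relation, written $g^{2}-\tr(g)g+1=0$, then polarizes by bilinearity to
\[
 uv+vu=\tr(u)v+\tr(v)u+\bigl(\tr(uv)-\tr(u)\tr(v)\bigr)\cdot 1,\qquad u,v\in H,
\]
with specialization $u^{2}-\tr(u)u+\det(u)=0$, where $\det(u):=\tfrac12\bigl(\tr(u)^{2}-\tr(u^{2})\bigr)$.

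With this engine the computational core is routine: from $a=x+|a|$, $b=y+|b|$ with $x,y$ central, every word in $a^{\pm1},b^{\pm1}$ is an $H^{+}$-combination of alternating products of $|a|,|b|$; feeding $\tr(|a|)=\tr(|b|)=\tr(|ab|)=0$ and $\tr(|a||b|)=2z$ into the two displayed identities gives $|a|^{2}=x^{2}-1$, $|b|^{2}=y^{2}-1$, $|a||b|+|b||a|=2z$, $|a||ab|=-z|a|+(x^{2}-1)|b|$, and the remaining table entries the same way, after which an induction on the number of syllables collapses all longer products into $H^{+}+H^{+}|a|+H^{+}|b|+H^{+}|ab|$. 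This yields the spanning statement, the multiplication table, and the directness of the sum. What remains — and this is the genuine obstacle — is the \emph{freeness}: that $H^{+}=\c[x,y,z]$ is a polynomial ring and that $|a|,|b|,|ab|$ are $H^{+}$-linearly independent. For the former, $x,y,z$ generate $H^{+}$ by the Fricke relation $\tr(UV)+\tr(UV^{-1})=\tr(U)\tr(V)$ (induction on word length), and they are algebraically independent because the representation $H\to M_2(\O(\SL_2\times\SL_2))$, $g\mapsto(\rho\mapsto\rho(g))$ — well defined since $A+A^{-1}=\tr(A)\,\mathrm{Id}$ is scalar — sends them to $\tfrac12\tr A$, $\tfrac12\tr B$ and a polynomial therein, functions algebraically independent on $\SL_2\times\SL_2$. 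For the latter, I would run a putative relation $c_{0}+c_{1}|a|+c_{2}|b|+c_{3}|ab|=0$ through $\tr$ and through left multiplication by $|a|,|b|,|ab|$ (using the multiplication table) and extract $c_{0}=c_{1}=c_{2}=c_{3}=0$ from the fact that $\c[x,y,z]$ is a UFD — or simply read linear independence off the invariant-theoretic model of the first paragraph.
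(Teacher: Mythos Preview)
The paper does not supply its own proof of this statement: it is quoted verbatim as \cite[Prop.~3.9]{BH95} and then used as a black box to set up the Ore-extension presentation in Proposition~\ref{presBH}. So there is nothing on the paper's side to compare against.

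Your two-track approach is sound. The invariant-theoretic route via Theorem~\ref{bhthm}(2) together with Procesi's description of equivariant matrix functions on pairs of $2\times 2$ matrices is the cleanest reconstruction, and you are right to flag the possible logical circularity with \cite[Prop.~9.1]{BH95}. The internal argument is also correct: the polarized Cayley--Hamilton identity really does hold in $H$ for arbitrary $u,v$, since by bilinearity it suffices to check it on group elements, and a direct expansion shows that $gh+hg-\tr(g)h-\tr(h)g+\tr(g)\tr(h)-\tr(gh)$ reduces to $(gh^{-1}-h^{-1}g)-(h^{-1}g^{-1}-g^{-1}h^{-1})$, which vanishes by centrality of $g+g^{-1}$. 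From there the multiplication table and the spanning statement follow exactly as you outline. The one place your ``internal'' argument is not fully self-contained is freeness, where you (reasonably) fall back on the matrix realization to get algebraic independence of $x,y,z$ and linear independence of $1,|a|,|b|,|ab|$; this is fine, but it does mean the second proof ultimately leans on the same representation-theoretic input as the first.
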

%
%

We will construct a different presentation of $H[F]$ in terms of Ore extensions. First,
we recall basic definitions.

\subsection{Ore extensions}
Let $ k $ be a field. Given an associative $k$-algebra $R$ and an endomorphism $\sigma:R \to R$, a (left) $\sigma$-derivation $\delta:R \to R$ is a linear map satisfying the rule
\[
 \delta(ab) = \sigma(a)\delta(b) + \delta(a)b
\]
Note that this rule implies $\delta(a) = 0$ for $a \in k$. 
A typical example of a $\sigma$-derivation is given by a twisted inner derivation 
$ ad_\sigma(a): R \to R $ defined by $ \ad_\sigma(a)[x] := ax - \sigma(x)a $, where
$ a \in R $.

\begin{definition}
 Given a commutative $k$-algebra $R$, an endomorphism $\sigma:R \to R$, and a 
$\sigma$-derivation $\delta$, we define
\begin{equation*}
  R[t;\sigma,\delta] := \frac{R\langle t\rangle}{(ta = \sigma(a) t + \delta(a))}\ .
 \end{equation*}
The algebra $ R[t;\sigma,\delta]$ is called the \emph{Ore extension} of $R$ with respect to $(\sigma,\,\delta)$.
\end{definition}
The following facts are standard and easy to prove (see, e.g., \cite[1.2.4]{MR01}).
\begin{lemma}
\la{oreex}
\begin{enumerate}
 \item If $R$ is an integral domain and $ \sigma $ is injective, then  
$ R[t;\sigma,\delta] $ is a (noncommutative) domain, which is free as a left 
and right module over $R$.
\item If $R$ is left (right) Noetherian, then $R[t;\sigma,\delta]$ is also left (right) Noetherian.
\end{enumerate}
\end{lemma}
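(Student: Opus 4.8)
The plan is to prove part (1) by a ``normal form plus leading term'' argument and part (2) by the Ore-extension analogue of the Hilbert basis theorem. Write $S:=R[t;\sigma,\delta]$. The first thing I would record, by an easy induction on $i$ from the defining relation $ta=\sigma(a)t+\delta(a)$, is the commutation rule
\[
t^i b \;=\; \sigma^i(b)\,t^i \;+\; (\text{terms of }t\text{-degree}<i),\qquad b\in R,\ i\ge 0,
\]
which underlies everything; here the \emph{$t$-degree} and \emph{leading coefficient} of a nonzero $f=\sum_{i=0}^m a_it^i$ (with $a_m\neq 0$) mean $m$ and $a_m$.

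For part (1) I would first establish freeness as a left $R$-module. The relation lets one rewrite every element of $S$ as $\sum_n a_nt^n$, so $\{t^n\}_{n\ge 0}$ spans. For independence, let $M=\bigoplus_{n\ge 0}R\,e_n$ be the free left $R$-module on symbols $e_n$, let $R$ act coordinatewise by left multiplication, and let $\tau\colon M\to M$ be the additive map $\tau(a e_n):=\sigma(a)e_{n+1}+\delta(a)e_n$. A direct computation gives $\tau\circ(a\cdot(-))=\sigma(a)\cdot\tau(-)+\delta(a)\cdot(-)$, so $t\mapsto\tau$ together with the multiplication action of $R$ extends to a ring homomorphism $S\to\End_{\Z}(M)$; then $t^n\cdot e_0=e_n$, so a relation $\sum_n a_nt^n=0$ in $S$ forces all $a_n=0$. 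Thus $S$ is free on $\{t^n\}$ as a left $R$-module. The right-module statement is the mirror image: using the commutation rule, and (when $\sigma$ is bijective --- the only case needed in this paper, where $\sigma$ is always an involution of $R$) surjectivity of $\sigma$ to rewrite $at=tb-\delta(b)$ with $\sigma(b)=a$, one sees $\{t^n\}$ is also a right $R$-basis. Finally, for the domain property take nonzero $f,g$ of $t$-degrees $m,n$ with leading coefficients $a_m,b_n$; by the commutation rule the coefficient of $t^{m+n}$ in $fg$ is $a_m\sigma^m(b_n)$, which is nonzero since $\sigma^m$ is injective and $R$ is a domain. Hence $fg\neq 0$ (of $t$-degree $m+n$), so $S$ is a domain.

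For part (2) I would imitate the Hilbert basis theorem. Let $I\subseteq S$ be a left ideal, and for $n\ge 0$ let $I_n\subseteq R$ consist of $0$ together with all $\sigma^{-n}(a)$, where $a$ ranges over leading coefficients of $t$-degree-$n$ elements of $I$. Conjugating the usual verification by the automorphism $\sigma^n$ shows each $I_n$ is an ideal of $R$, and since left-multiplying a $t$-degree-$n$ element of $I$ by $t$ produces a $t$-degree-$(n+1)$ element whose leading coefficient is $\sigma$ of the previous one, the $I_n$ form an ascending chain $I_0\subseteq I_1\subseteq\cdots$. By Noetherianity of $R$ it stabilizes, say at step $N$, and each $I_n$ for $n\le N$ is finitely generated; lift every generator to a witnessing element of $I$ of the corresponding $t$-degree. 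One then shows these finitely many elements generate $I$ by clearing leading terms: for $f\in I$ of $t$-degree $d$, applying $\sigma^{-d}$ to its leading coefficient lands in $I_{\min(d,N)}$, so an $R$-combination of the chosen witnesses times a suitable power of $t$ matches the leading term of $f$; subtracting strictly lowers the $t$-degree, and induction finishes. The right-Noetherian case is symmetric.

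The only point requiring care --- and it is entirely routine --- is the bookkeeping of the twist by $\sigma$ on leading coefficients as the $t$-degree increases: this is precisely why the leading-coefficient ideals in part (2) are normalized by $\sigma^{-n}$, and why the right-sided statements use that $\sigma$ is bijective (which it is throughout this paper). All of this is the standard theory of Ore extensions; cf.\ \cite[1.2.4]{MR01}.
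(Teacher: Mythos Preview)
The paper does not actually prove this lemma; it simply cites \cite[1.2.4]{MR01} and calls the facts ``standard and easy to prove.'' Your write-up is the standard argument one finds there (normal form/PBW for freeness, leading-term product for the domain property, and the Hilbert basis theorem adaptation for Noetherianity), and it is correct.

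You were also right to flag the bijectivity hypothesis on $\sigma$: right freeness on $\{t^n\}$ and the ascending-chain step in the Noetherian argument both use $\sigma^{-1}$, so strictly speaking the lemma as stated (with $\sigma$ merely injective) is a bit loose on those points. Since in this paper $\sigma$ is always an involution of $R$, nothing is lost, and your parenthetical caveat handles the matter cleanly.
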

\subsection{The BH algebra as an Ore extension} 
Let $\, R := k[X^{\pm 1}, y,z]\,$, and let $ \sigma \in \Aut(R) $ be an automorphism of $ R$ 
defined on generators by
\begin{equation}
\la{sigma}
  X \mapsto X^{-1},\quad X^{-1} \mapsto X,\quad y \mapsto y,\quad z \mapsto z
\end{equation}
To define $\delta$, we set
\[
 \delta(X) = 2z,\quad \delta(X^{-1}) = -2z,\quad \delta(y) = \delta(z) = 0\ .
\]
\begin{lemma}
\la{delta}
 $\delta$ extends to a (unique) $\sigma$-derivation of $R$ given by 
\begin{equation}
\la{form}
\delta(X^k) = 2z \,U_{k-1}(x)\ , \quad \forall\, k \in \Z\ ,
\end{equation}
where $ U_{k-1}(x) $ is the $(k-1)$-th Chebyshev polynomial of $\, x = (X+X^{-1})/2 $. This derivation satisfies the relations
\begin{equation}
\la{reld}
\sigma\,\delta = -\delta\,\sigma = \delta\ ,\quad \delta^2 = 0 \ . 
\end{equation}
\end{lemma}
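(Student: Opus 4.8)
\textbf{Proof proposal for Lemma~\ref{delta}.}
The plan is to produce $\delta$ by an explicit closed formula, check that it is a $\sigma$-derivation by reducing the Leibniz rule to one elementary Laurent-polynomial identity, and then derive the relations \eqref{reld} formally.

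For existence I would use that every element of $R=k[X^{\pm1},y,z]$ has a unique normal form $f=\sum_{k\in\Z}X^k p_k(y,z)$ with $p_k\in k[y,z]$, and set
\[
\delta\Big(\sum_k X^k p_k(y,z)\Big):=\sum_k 2z\,U_{k-1}(x)\,p_k(y,z)\in R,
\]
which is manifestly $k$-linear, kills $k[y,z]$, and satisfies $\delta(1)=0$ (here one uses the conventions $U_{-1}=0$ and, for negative indices, $U_{-k-1}=-U_{k-1}$, consistent with running the recursion $U_{n+1}=2yU_n-U_{n-1}$ backwards). To see this $\delta$ is a $\sigma$-derivation, note that both $(a,b)\mapsto\delta(ab)$ and $(a,b)\mapsto\sigma(a)\delta(b)+\delta(a)b$ are $k$-bilinear, so it suffices to check equality on the spanning monomials $X^j p$, $X^k q$; since $\sigma$ and $\delta$ act trivially on the $k[y,z]$-factors, this collapses to
\[
2z\,U_{j+k-1}(x)=X^{-j}\cdot 2z\,U_{k-1}(x)+2z\,U_{j-1}(x)\cdot X^k\qquad(j,k\in\Z),
\]
and, using $U_{n-1}(x)=(X^n-X^{-n})/(X-X^{-1})$, this is just the telescoping identity $X^{j+k}-X^{-j-k}=X^{-j}(X^k-X^{-k})+X^k(X^j-X^{-j})$. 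The case $j=1$ of this identity is the recursion $\delta(X^{k+1})=X^{-1}\delta(X^k)+2zX^k$, so starting from $\delta(X)=2z$ an induction on $k$ reproves formula \eqref{form} directly; and $\delta(X^{-1})=-2z$, $\delta(y)=\delta(z)=0$ hold by inspection. For uniqueness, a $\sigma$-derivation is determined by its values on the algebra generators $X,X^{-1},y,z$, and $\delta(X^{-1})$ is itself forced by $0=\delta(1)=\delta(XX^{-1})=\sigma(X)\delta(X^{-1})+\delta(X)X^{-1}$; hence the $\delta$ above is the only $\sigma$-derivation with the prescribed generator values.

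It remains to establish \eqref{reld}. Since $\sigma(x)=(X^{-1}+X)/2=x$ and $\sigma(z)=z$, we get $\sigma(\delta(X^k))=\sigma(2z\,U_{k-1}(x))=2z\,U_{k-1}(x)=\delta(X^k)$, and as $\sigma,\delta$ both fix or annihilate $k[y,z]$ this gives $\sigma\delta=\delta$ on all of $R$. Similarly, $\delta(\sigma(X^k))=\delta(X^{-k})=2z\,U_{-k-1}(x)=-2z\,U_{k-1}(x)=-\delta(X^k)$ because $U_{-k-1}=-U_{k-1}$, so $\delta\sigma=-\delta$. Finally, associativity of composition of the three linear maps yields
\[
\delta^2=\delta\circ(\sigma\circ\delta)=(\delta\circ\sigma)\circ\delta=(-\delta)\circ\delta=-\delta^2,
\]
whence $2\delta^2=0$ and therefore $\delta^2=0$, since $2$ is invertible in $k$.

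The only step that is not pure bookkeeping is verifying that $\delta$ is globally well defined as a $\sigma$-derivation on the commutative ring $R$ rather than merely on generators; this is exactly the content of the telescoping Laurent identity above, so I expect that (trivial as it is) to be the crux, with everything else following formally.
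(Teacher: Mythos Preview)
Your proof is correct and follows essentially the same route as the paper's, only with the details spelled out. The paper's argument is extremely terse: it checks the single consistency condition $\delta(X\cdot X^{-1})=\sigma(X)\delta(X^{-1})+\delta(X)X^{-1}=0$, asserts that formula \eqref{form} then follows by induction on $k$, and says the relations \eqref{reld} follow from that formula. Your approach inverts the logic slightly---you \emph{define} $\delta$ by the closed formula and then verify the $\sigma$-Leibniz rule directly via the telescoping identity---but this is the same computation viewed from the other end; your ``case $j=1$'' is exactly the paper's induction step. Your derivation of \eqref{reld} (in particular deducing $\delta^2=0$ from $\sigma\delta=\delta=-\delta\sigma$) makes explicit what the paper leaves to the reader.
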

\begin{proof}
First, check that $\delta(X\cdot X^{-1}) = \delta(1) = 0$. Indeed, 
 \[\delta(X\cdot X^{-1}) = \sigma(X)\delta(X^{-1}) + \delta(X)X^{-1} = X^{-1}(-2z)+2z X^{-1} = 0 
 \]
Now, formula \eqref{form} follows easily by induction in $k$, while the relations 
\eqref{reld} follow \eqref{form}.
\end{proof}
Using the above $\sigma$ and $\delta$ on $R$, we define
\begin{equation}
\label{eq_orepres}
 R[t;\sigma,\delta] = \frac{k[X^{\pm 1},y,z]\langle t 
\rangle}{(ty=yt,\,\, tz = zt,\,\, tX = X^{-1}t + 2z,\,\,tX^{-1} = Xt - 2z)}
\end{equation}
Note that, by Theorem~\ref{BHT1}, the $\sigma$-invariant subalgebra 
$ R^\sigma = k[x,y,z]$ of $R$ is isomorphic to $ H^+[F]$. 
The next proposition shows that this isomorphism can be extended to
the entire algebra $ H[F] $.
\begin{proposition}
\la{presBH}
The assignment $\, X^{\pm 1} \mapsto a^{\pm 1}$, 
$\,y\mapsto y$, $\, z \mapsto z$, $\,t \mapsto\lv b\,$
extends to a surjective algebra homomorphism $\Psi: R[t;\sigma,\delta] \onto H[F]$,
with kernel generated by $ t^2-y^2+1$. Thus, we have an isomorphism of algebras
\[
 H[F] \cong R[t;\sigma,\delta]/(t^2-y^2+1)
\]
that restricts to $ H^+[F] \cong R^\sigma = k[x,y,z] $.
\end{proposition}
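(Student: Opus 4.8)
The plan is to build $\Psi$ from the universal property of the Ore extension, check surjectivity by an explicit generation argument, exhibit $t^2-y^2+1$ as an obvious central element of the kernel, and then close the gap by a rank count.

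First I would invoke the universal property of $R[t;\sigma,\delta]$: an algebra map $R[t;\sigma,\delta]\to H[F]$ is the same datum as an algebra map $\phi\colon R\to H[F]$ together with an element $s\in H[F]$ satisfying $s\,\phi(r)=\phi(\sigma(r))\,s+\phi(\delta(r))$ for all $r\in R$. I take $\phi\colon X^{\pm1}\mapsto a^{\pm1},\ y\mapsto y,\ z\mapsto z$ and $s:=\lv{b}$. The map $\phi$ is well defined because $y=b^+$ and $z=(ab)^+-a^+b^+$ are central in $H[F]$ (recall that $g+g^{-1}$ is central for every $g$), so $a^{\pm1},y,z$ generate a commutative subalgebra; and the compatibility relation need only be checked on the generators $X^{\pm1},y,z$ of $R$. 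For $y$ and $z$ it is immediate ($\lv{b}$ commutes with $y$ and with the central element $z$); for $X$ it reads $\lv{b}\,a=a^{-1}\lv{b}+2z$, which I would verify by writing $a=x+\lv{a}$ and using the multiplication table of Theorem~\ref{BHT1} (both sides collapse to $x\lv{b}+z-\lv{ab}$ via $\lv{b}\lv{a}=z-\lv{ab}$ and $\lv{a}\lv{b}=z+\lv{ab}$), after which the relation for $X^{-1}$ follows automatically since $a^{-1}=2x-a$. This produces $\Psi$ with $\Psi(t)=\lv{b}$.

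Next I would observe that $\mathrm{Im}(\Psi)$ contains $a^{\pm1},y,z$, hence $x=(a+a^{-1})/2$, hence $H^+[F]=k[x,y,z]$ and $\lv{a}=(a-a^{-1})/2$; it also contains $\lv{b}=\Psi(t)$ and $\lv{ab}=\lv{a}\lv{b}-z$, so by Theorem~\ref{BHT1} it is all of $H[F]$, i.e.\ $\Psi$ is surjective. Since $\lv{b}^2=y^2-1$ in $H[F]$ (again the multiplication table), $t^2-y^2+1\in\ker\Psi$, so $\Psi$ factors as a surjection $\bar\Psi\colon R[t;\sigma,\delta]/(t^2-y^2+1)\onto H[F]$. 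To see that $\bar\Psi$ is injective I would first note that $t^2-y^2+1$ is \emph{central} in $R[t;\sigma,\delta]$: $y^2-1\in R^\sigma$ is central, and for $r\in R$ one computes $t^2r=\sigma^2(r)t^2+(\delta\sigma+\sigma\delta)(r)t+\delta^2(r)=rt^2$ using $\sigma^2=\mathrm{id}$ together with the identities $\delta^2=0$ and $\sigma\delta=-\delta\sigma=\delta$ from Lemma~\ref{delta}. Hence, since $R[t;\sigma,\delta]$ is free as a left $R$-module on $\{t^i\}_{i\ge0}$ (Lemma~\ref{oreex}), the quotient by the two-sided ideal $(t^2-y^2+1)$ is free as a left $R$-module with basis $\{1,t\}$; and as $R=k[X^{\pm1},y,z]$ is free over $R^\sigma=k[x,y,z]$ with basis $\{1,X\}$ (because $X^2-2xX+1=0$), the quotient is a free $k[x,y,z]$-module of rank $4$. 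By Theorem~\ref{BHT1}, $H[F]$ is likewise a free $k[x,y,z]$-module of rank $4$ on $\{1,\lv{a},\lv{b},\lv{ab}\}$, and $\bar\Psi$ is $k[x,y,z]$-linear because $\Psi$ fixes $x,y,z$. A surjection between free modules of equal finite rank over a commutative ring is an isomorphism, so $\bar\Psi$ is an isomorphism, $\ker\Psi=(t^2-y^2+1)$, and restricting to $\sigma$-invariants yields $H^+[F]\cong R^\sigma=k[x,y,z]$.

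The hard part will be the single identity $\lv{b}\,a=a^{-1}\lv{b}+2z$ in $H[F]$; everything else is bookkeeping once one has the multiplication table of Theorem~\ref{BHT1} at hand. If the rank-count endgame is felt to be too slick, one can instead check injectivity of $\bar\Psi$ directly on the $k[x,y,z]$-basis $\{1,X,t,Xt\}$, comparing coefficients against the decomposition $H[F]=H^+\oplus H^+\lv{a}\oplus H^+\lv{b}\oplus H^+\lv{ab}$ and the reduction rules $a^{\pm1}=x\pm\lv{a}$, $\lv{a}\lv{b}=z+\lv{ab}$.
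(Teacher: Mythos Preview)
Your proof is correct and follows essentially the same route as the paper for constructing $\Psi$ and checking surjectivity: both verify the Ore relation $\lv{b}\,a=a^{-1}\lv{b}+2z$ from the multiplication table of Theorem~\ref{BHT1}, and both observe that the image contains $\lv{ab}$ (the paper does this via $\Psi([X,t])=2\lv{ab}$, you via $\lv{a}\lv{b}-z$).

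The one substantive difference is the kernel argument. The paper simply asserts that ``it is easy to see that $t^2-y^2+1$ generates the kernel of $\Psi$'' and leaves it at that. Your argument actually fills this gap: you show $t^2-y^2+1$ is central (using $\sigma^2=\mathrm{id}$, $\delta^2=0$, $\sigma\delta+\delta\sigma=0$ from Lemma~\ref{delta}), deduce that the quotient is free of rank $2$ over $R$ and hence of rank $4$ over $R^\sigma=k[x,y,z]$, and finish by comparing with the rank-$4$ decomposition of $H[F]$ from Theorem~\ref{BHT1}. This rank-count endgame is a genuine addition to the paper's exposition and is the cleanest way to close the argument.
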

\begin{proof}
The map $\Psi$ is well defined because
 \[
  \Psi(tX-X^{-1}t-2z) = \lv b (x+\lv a) - (x - \lv a)\lv b - 2z = \lv b \lv a + \lv a \lv b - 2z = z - \lv{ab} + z + \lv{ab} - 2z = 0\ .
 \]
Similarly, $\Psi(tX^{-1} - Xt + 2z) = 0$, and 
\[
 \Psi(1) = \Psi(XX^{-1}) = (x+\lv a)(x - \lv a) = x^2 - {\lv a}^2 = 1
\]
The surjectivity of $\Psi$ follows from the calculation
\begin{eqnarray*}
 \Psi([X,t]) &=& \Psi(Xt) - \Psi(tX) = (x+\lv a)\lv b - \lv b (x+\lv a)\\
 &=& \lv a \lv b - \lv b \lv a = z + \lv {ab} - (z - \lv {ab}) = 2\lv {ab}\ .
\end{eqnarray*}
Finally, we check that
\[
 \Psi(t^2-y^2+1) = \lv{b}^2 - y^2 + 1 = y^2 - 1 - y^2 + 1 = 0
\]
and it is easy to see that $ t^2-y^2+1 $ generates the kernel of $ \Psi $.
\end{proof}
Thus, we have the following commutative diagram
\begin{diagram}[small]
 \frac{R[t;\sigma,\delta]}{t^2-y^2+1} & \rTo^{\cong} & H[F]\\
 \uInto & & \uInto \\
 R & \rTo^{\cong} & H^+[a^{\pm 1}]\\
 \uInto & & \uInto\\
 R^\sigma & \rTo^{\cong} & H^+
\end{diagram}
where the horizontal arrows are algebra isomorphisms and the vertical ones are inclusions.

\begin{corollary}\label{cor_hpres}
The algebra $H[F]$ is a free quadratic extension of $ R = H^+[a^{\pm 1}]$ with basis 
$\{1,t\}$:
 \[
  H[F] = R \oplus Rt
 \]
The multiplication in $ H[F] $ is determined by the relation $ t^2 = y^2-1$.
\end{corollary}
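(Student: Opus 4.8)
The plan is to read the statement off directly from Proposition~\ref{presBH}, which identifies $H[F]$ with the quotient $R[t;\sigma,\delta]/(t^2-y^2+1)$, together with the basic structure theory of Ore extensions recalled in Lemma~\ref{oreex}. The only genuine point to verify is that passing to this quotient collapses the left $R$-module $R[t;\sigma,\delta]=\bigoplus_{n\ge 0}R\,t^n$ down to $R\oplus Rt$, and for that it is enough to understand the ideal generated by $f:=t^2-y^2+1$.

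First I would record that $R[t;\sigma,\delta]$ is, by Lemma~\ref{oreex}(1), free as a left $R$-module, with basis the monomials $\{1,t,t^2,\dots\}$; this is immediate from the defining relations, which allow any word to be rewritten in normal form $\sum_i r_i t^i$ with $r_i\in R$. Next I would check that $f$ is \emph{central} in $R[t;\sigma,\delta]$. Since $R$ is commutative and the relations $ty=yt$, $tz=zt$ hold, the elements $y,z$, hence $y^2-1$, are central; and a one-line computation gives $t^2X=t(X^{-1}t+2z)=(Xt-2z)t+2zt=Xt^2$, and similarly $t^2X^{-1}=X^{-1}t^2$, while $t^2$ commutes with $y$, $z$, $t$ trivially, so $t^2$ is central. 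Therefore the two-sided ideal $(f)$ coincides with the left ideal $R[t;\sigma,\delta]\cdot f$.

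With $f$ central, I would then show $R[t;\sigma,\delta]=R\oplus Rt\oplus(f)$ as left $R$-modules. A general element of $(f)$ has the form $\big(\sum_{i=0}^n r_i t^i\big)f=\sum_i r_i t^{i+2}-(y^2-1)\sum_i r_i t^i$; comparing coefficients of the highest power of $t$ in the normal form shows that a nonzero such element has $t$-degree at least $2$, so $(f)\cap(R\oplus Rt)=0$, while the congruence $t^2\equiv y^2-1\pmod{(f)}$ reduces every $t^n$ with $n\ge 2$ into $R\oplus Rt$, giving the direct sum decomposition. Consequently the composite $R\oplus Rt\hookrightarrow R[t;\sigma,\delta]\twoheadrightarrow R[t;\sigma,\delta]/(f)\cong H[F]$ is an isomorphism of left $R$-modules, and the multiplication rule $t\cdot t=y^2-1$ is precisely the relation imposed in the quotient. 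I expect no serious obstacle: the only thing that could go wrong --- a non-central $f$ generating a larger, harder-to-control ideal --- is exactly what the centrality check rules out, so that step is the crux of the (short) argument.
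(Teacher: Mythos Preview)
Your proof is correct and follows the same route as the paper, which simply cites Proposition~\ref{presBH} and Lemma~\ref{oreex}(1) without further comment. You have supplied the details the paper leaves implicit: in particular, the verification that $t^2-y^2+1$ is central (so the two-sided ideal is just the left ideal) and the degree argument showing $(f)\cap(R\oplus Rt)=0$ are exactly what is needed to pass from the free $R$-module $\bigoplus_{n\ge 0}Rt^n$ to the rank-two quotient, and the paper's one-line proof tacitly assumes the reader will carry this out.
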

\begin{proof}
This follows from the isomorphism of Proposition~\ref{presBH} and Lemma~\ref{oreex}$(1)$.
\end{proof}
We list several natural involutions on $H[F]$ which are induced from involutions on the 
free group $ F $:
\begin{enumerate}
 \item Inverse anti-involution: $*: H[F] \to H[F]$, defined by $a \mapsto a^{-1}$ and $b \mapsto b^{-1}$.
 \item Canonical anti-involution: $\gamma: H[F] \to H[F]$, defined by $a \mapsto a$, $b \mapsto b$, and $\gamma(ab) = ba$.
 \item Involution ``switching $a$ and $b$:'' $\xi: H[F] \to H[F]$, $a \mapsto b$, $b \mapsto a$.
 \item Involution inverting $a$ and $b$: $\sigma:H[F] \to H[F]$, $a \mapsto a^{-1}$, $b \mapsto b^{-1}$.
\end{enumerate}
These involutions can be expressed in terms of their action on the generators 
$X^{\pm 1}$, $y$, $z$ and $t$. For example, the involution $ \sigma $ extends 
the eponymous involution on $ R \,$: it acts on $X^{\pm 1}$, $y$ and $z$ by 
\eqref{sigma}, while $ \sigma(t) = - t $.

\subsection{One-relator groups}
We now use the results of the previous section to give a presentation of the Brumfiel-Hilden algebra for
a two generator groups with one relator. Thus, we consider $\pi = F\langle a,b \rangle/(w_1 w_2^{-1})$, where $ w_1 $ and $w_2 $ are two words in $ F\langle a,b \rangle $. By Corollary 
\ref{cor_hpres}, $\, H[F] = R \oplus Rt\,$, with multiplication defined by  $ t^2 = y^2-1\,$.
Hence, 
\[
w_1 - w_2 = A + Bt \in H[F]
\]
for some polynomials $\,A, B \in R = k[X^{\pm 1},y,z]$. We will write $(w_1-w_2) = (A+Bt)$ for the two-sided ideal in $ H[F] $ generated by $ w_1 - w_2  $. 

By [BH95, Prop.~1.4] and Proposition~\ref{presBH} above, we can now identify
\begin{equation}
\la{BHpi}
H[\pi] \cong  R[t;\sigma,\delta]/(A+Bt, t^2 - y^2+1) \cong [R \oplus Rt]/(A+Bt)\ ,
\end{equation}
and think of elements of $ H[\pi] $ as elements of $ R \oplus Rt $ modulo the ideal
$(A+Bt)$. Our goal is to give a criterion when an element $ [r_0 +r_1 t] \in  H[\pi] $ 
belongs to the commutative subalgebra $ H_0[\pi] $ generated by $ H^+[\pi] $ and $ a^{\pm 1} \in H[\pi] $. First, observe that, with identification \eqref{BHpi}, we have
\begin{equation}
\la{BHpi0}
H_0[\pi] \cong R/R\,\cap\,(A+Bt)\ .
\end{equation}
To characterize the elements of $ H_0[\pi] $ we define
$$
J := \{r \in R\, :\, r_0 +rt \in (A+Bt)\ \mbox{for some}\, r_0 \in R\}\ .
$$
Note that $\, r \in J \,$ $ \Leftrightarrow $ $\,rt \equiv R\,$ modulo $\,(A+Bt)\,$.
Hence $ J $ is an ideal of $ R $ such that $\, R\,\cap\,(A+Bt) \subseteq J \,$.
(The last inclusion follows from the fact that for any $\, r \in R\,\cap\,(A+Bt) $,
$ rt \in (A+Bt) $, because $ (A+Bt) $ is an ideal of $ H[F] $, so $ rt \equiv 0 $ 
modulo $\,(A+Bt)\,$.) Moreover, it is immediate from \eqref{BHpi} that
\begin{equation}
\la{incl}
[r_0 +r_1 t] \in  H_0[\pi] \quad \Leftrightarrow \quad r \in J\ .  
\end{equation}
To make \eqref{incl} an effective criterion we need to give generators of $ J $ 
as an ideal in $ R $. First, by definition, $ B \in J $. On the other hand, we have
\begin{eqnarray*}
 (A+Bt)t &=& B(y^2-1) + At\\
 t(A+Bt) &=& tA + tBt = A^\sigma t + \delta(A) + B^\sigma(y^2-1) + \delta(B)t \\ 
 &=& \delta(A) + B^\sigma(y^2-1) + (A^\sigma + \delta(B))t\\
 t(A+Bt)t &=& (A^\sigma + \delta(B))(y^2-1) + (\delta(A) + B^\sigma(y^2-1) t
\end{eqnarray*}
where $\, A^\sigma := \sigma(A)$, etc. Hence $ J $ also contains the elements
$\,A\,$, $\, A^\sigma + \delta(B)$, $\,\delta(A) + B^\sigma(y^2-1)$, 
$\, A^\sigma - \delta(B^\sigma)$ and $\, \delta(A^\sigma) - B^\sigma(y^2-1)\,$. 
Now, by \eqref{reld}, the last two elements coincide with the previous and hence
are redundant as generators; on the other hand, it is easy to see that 
the rest do actually generate $J\,$: i.e.,
\begin{equation}
\label{eq_j1}
J = \langle A,\,B,\, A^\sigma + \delta(B),\,\delta(A) + B^\sigma(y^2-1) \rangle
\end{equation}

Let us assume that $ \pi $ admits a palindromic presentation, i.e.
the defining relation $ w_1 = w_2 $ of $ \pi $ is such that
the elements $ w_1 $ and $ w_2 $ are both palindromic as words in $a$ and $b$. 
Then $ \gamma(w_1) = w_1 $ and $ \gamma(w_2) = w_2 $ and hence 
$ \gamma(w_1 - w_2) = w_1 - w_2 $, where $ \gamma $ is the canonical 
anti-involution fixing $a$ and $b$.  The last condition implies that 
$ tB = Bt $ in $ H[F] $ or equivalently,
\begin{equation}
\label{cond_star}
 B^\sigma = B \ . 
\end{equation}
In this case, the ideal $ J $ simplifies as follows
\begin{equation}
\label{eq_j1}
J = \langle A,\,B,\, A^\sigma,\,\delta(A)\rangle\ .
\end{equation}
We summarize the above observations in the following Proposition which is the main tool used in Sections \ref{sec_torus} and \ref{sec_pretzel}.
\begin{proposition}
\label{yuri_prop2}
Let $\, \pi = \langle a, b\, :\, w_1 w_2^{-1} \rangle \,$ be a group with two generators
and one relator. Let $ w_1 - w_2 = A + Bt \in H[F] $. Then
\begin{enumerate}
\item[(1)]
$\,H[\pi] \cong  R[t;\sigma,\delta]/(A+Bt, t^2 - y^2+1) \quad\mbox{and} 
\quad H_0[\pi] \cong R/(R\,\cap\,(A+Bt))$.
\item[(2)]
An element $ [r_0 + r t] \in H[\pi] $ belongs to $ H_0[\pi] $ if and only if
$$
r \in J = \langle A,\,B,\, A^\sigma + \delta(B),\,\delta(A) + B^\sigma(y^2-1) \rangle\ .
$$
\item[(3)] If the presentation of $ \pi $ has a palindromic presentation, then
$ J $ in $(2)$ is given by
$$
J = \langle A,\,B,\, A^\sigma,\,\delta(A)\rangle\ .
$$
\end{enumerate}
\end{proposition}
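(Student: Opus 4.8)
The plan is to obtain all three parts by transporting everything to the Ore-extension picture of $H[F]$ built above. For Part (1), I would first invoke \cite[Prop.~1.4]{BH95}: since $\pi = F/(w_1w_2^{-1})$ and $w_2$ is a unit, the two-sided ideal of $\C[F]$ imposing $w_1w_2^{-1}=1$ equals the one generated by $w_1-w_2=(w_1w_2^{-1}-1)w_2$, so $H[\pi]\cong H[F]/(w_1-w_2)$. Writing $w_1-w_2=A+Bt$ under the isomorphism $H[F]\cong R[t;\sigma,\delta]/(t^2-y^2+1)$ of Proposition~\ref{presBH} then yields the two displayed presentations of $H[\pi]$, the passage to $(R\oplus Rt)/(A+Bt)$ using freeness of $H[F]$ over $R$ (Corollary~\ref{cor_hpres}). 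For the statement about $H_0[\pi]$, I would observe that adjoining $X^{\pm 1}$ to $R^\sigma=H^+[F]=\C[x,y,z]$ inside $H[F]$ produces exactly $R=\C[X^{\pm 1},y,z]$, so $H_0[\pi]$ — the subalgebra generated by $H^+[\pi]$ and $X^{\pm 1}$ — is the image of $R$, namely $R/(R\cap(A+Bt))$.

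For Part (2), set $J:=\{r\in R : r_0+rt\in(A+Bt)\text{ for some }r_0\in R\}$. The equivalence $[r_0+rt]\in H_0[\pi]\Leftrightarrow r\in J$ is then formal from Part (1), since lying in the image of $R$ means agreeing with an element of $R$ modulo $(A+Bt)$, and $J$ is an ideal because $s(r_0+rt)=sr_0+(sr)t$ for $s\in R$. To find generators I would multiply $\omega:=A+Bt$ by $t$ on each side and reduce using $ta=\sigma(a)t+\delta(a)$ and $t^2=y^2-1$, getting $\omega t=B(y^2-1)+At$, $t\omega=[\delta(A)+B^\sigma(y^2-1)]+[A^\sigma+\delta(B)]t$ and $t\omega t=[A^\sigma+\delta(B)](y^2-1)+[\delta(A)+B^\sigma(y^2-1)]t$; the $t$-components exhibit $B$, $A$, $A^\sigma+\delta(B)$, $\delta(A)+B^\sigma(y^2-1)$ as elements of $J$, while the further elements $A^\sigma-\delta(B^\sigma)$ and $\delta(A^\sigma)-B^\sigma(y^2-1)$ that appear upon iterating are redundant by \eqref{reld}. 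For the reverse containment I would check that an arbitrary element of the two-sided ideal $(A+Bt)$ is an $R$-bimodule combination of $\omega$, $\omega t$, $t\omega$, $t\omega t$ — additional $t$'s give only $R$-multiples via $t^2=y^2-1$, and multiplying by $R$ on either side keeps the $t$-component in the ideal generated by those four (on the right via $(a+ct)s=[as+c\delta(s)]+c\sigma(s)t$). This gives $J=\langle A,B,A^\sigma+\delta(B),\delta(A)+B^\sigma(y^2-1)\rangle$.

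Part (3) is then a short specialization. If $w_1,w_2$ are palindromic, the canonical anti-involution $\gamma$ fixes $A+Bt$; since $\gamma$ fixes $X,y,z,t$ while $\gamma(Bt)=tB=B^\sigma t+\delta(B)$, comparing the $t$-components and the constant terms forces $B^\sigma=B$ (this is \eqref{cond_star}) and $\delta(B)=0$ — in fact $B^\sigma=B$ already puts $B$ in $R^\sigma=\C[x,y,z]$, on which $\delta$ vanishes. Feeding this back into the generators from Part (2) collapses $A^\sigma+\delta(B)$ to $A^\sigma$ and $\delta(A)+B^\sigma(y^2-1)$ to $\delta(A)$ modulo $\langle B\rangle$, giving $J=\langle A,B,A^\sigma,\delta(A)\rangle$. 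The step I expect to be the real content is the reverse inclusion in Part (2): verifying that the four displayed $t$-components \emph{generate} $J$ and not merely lie in it, equivalently that the $R$-sub-bimodule of $H[F]$ spanned by $\omega,\omega t,t\omega,t\omega t$ is already closed under every algebra operation; everything else is routine bookkeeping with the Ore relations \eqref{sigma} and \eqref{reld}.
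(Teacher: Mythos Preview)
Your proposal is correct and follows essentially the same route as the paper: the proposition is stated there as a summary of the preceding discussion, which identifies $H[\pi]$ via \cite[Prop.~1.4]{BH95} and Proposition~\ref{presBH}, computes $\omega t$, $t\omega$, $t\omega t$ to obtain the four generators of $J$, notes the redundancy of $A^\sigma-\delta(B^\sigma)$ and $\delta(A^\sigma)-B^\sigma(y^2-1)$ via \eqref{reld}, and for Part~(3) uses that palindromicity forces $tB=Bt$, i.e.\ $B^\sigma=B$. The one point where you add detail beyond the paper is the reverse inclusion in Part~(2): the paper simply asserts ``it is easy to see that the rest do actually generate $J$'', whereas you correctly isolate this as the substantive step and sketch why the two-sided ideal is the $R$-bimodule span of $\omega,\omega t,t\omega,t\omega t$ (using $t^2=y^2-1$ and the commutation rule $s_1 t = t\,\sigma^{-1}(s_1) - \delta(\sigma^{-1}(s_1))$ to absorb extra factors).
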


\bibliography{affine_cubic_surfaces}{}
\bibliographystyle{amsalpha}

\end{document}